\documentclass[11pt, reqno]{amsart}
\usepackage{amssymb}
\usepackage{hyperref}
\usepackage{tikz}
\usepackage{tikz-cd}
\usepackage[margin=1in]{geometry}
\usepackage{array}
\usepackage{enumitem}
\usepackage{mathrsfs}

\usepackage[backend=biber,style=alphabetic,maxnames=99,minnames=3]{biblatex}
\usepackage{listings}
\usepackage{xcolor}

\definecolor{codegreen}{rgb}{0,0.6,0}
\definecolor{codegray}{rgb}{0.5,0.5,0.5}
\definecolor{codepurple}{rgb}{0.58,0,0.82}
\definecolor{backcolour}{rgb}{0.95,0.95,0.92}

\lstdefinestyle{mystyle}{
    backgroundcolor=\color{backcolour},   
    commentstyle=\color{codegreen},
    keywordstyle=\color{magenta},
    numberstyle=\tiny\color{codegray},
    stringstyle=\color{codepurple},
    basicstyle=\ttfamily\lineskiplimit0pt\footnotesize,
    breakatwhitespace=false,         
    breaklines=true,                 
    captionpos=b,                    
    keepspaces=true,                 
    numbers=left,                    
    numbersep=5pt,                  
    showspaces=false,                
    showstringspaces=false,
    showtabs=false,                  
    tabsize=1,
    aboveskip=0pt,
    belowskip=0pt,
    lineskip=0pt,
}
\newtheorem{theorem}{Theorem}[section]
\newtheorem*{theorem*}{Theorem}
\newtheorem{lemma}[theorem]{Lemma}

\newtheorem{corollary}[theorem]{Corollary}
\newtheorem{proposition}[theorem]{Proposition}

\theoremstyle{definition}

\newtheorem*{question}{Question}
\newtheorem{examplex}[theorem]{Example}

\newenvironment{example}
  {\pushQED{\qed}\examplex}
  {\popQED\endexamplex}

\theoremstyle{remark}

\newcommand\CC{\mathbb{C}}

\newcommand\PP{\mathbb{P}}
\newcommand\QQ{\mathbb{Q}}

\newcommand\ZZ{\mathbb{Z}}

\newcommand{\B}{\mathcal{B}}
\newcommand{\C}{\mathcal{C}}
\newcommand{\D}{\mathcal{D}}
\newcommand\E{\mathcal{E}}

\renewcommand\O{\mathcal{O}}

\newcommand\sC{\mathscr{C}}
\newcommand\aut{\mathrm{Aut}}

\newcommand\inv{^{-1}}
\newcommand\lcm{\operatorname{lcm}}

\newcommand\deck{\mathrm{Deck}}
\newcommand{\gen}{\mathrm{genus}}

\begin{document}

\title{Rational Orbits Under Correspondences}

\author{Trevor Hyde}
\address{Dept. of Mathematics and Statistics\\
Vassar College\\
Poughkeepsie, NY 12604\\
}
\email{thyde@vassar.edu}

\maketitle

\section{Introduction}

In arithmetic dynamics we typically study number theoretic properties of the polynomial or rational function iteration.
In this paper we study the arithmetic of the dynamics of algebraic functions.
Suppose $F : \PP^1 \rightsquigarrow \PP^1$ is an algebraic function defined over $\QQ$.
That is, suppose there is a polynomial $f(x,y) \in \QQ[x,y]$ such that $f(x,F(x)) = 0$.
If $\deg_y f \geq 2$, then $F$ is technically not a function of $x$, but we may view it as a multi-valued function.
For example, consider $F(x) = \sqrt{x^3 - 1}$ which is defined by $y^2 - x^3 + 1 \in \QQ[x,y]$.
The presence of the square root in $F$ suggests that even if $p \in \PP^1(\QQ)$, we typically cannot expect $F(p) \in \PP^1(\QQ)$.
And yet, miracles happen: for example, $F(1) = 0$.
Our goal is to address the following question.

\begin{question}
    How many rational points can there be in the orbit of $p \in \PP^1(\QQ)$ under an algebraic function $F$?
\end{question}

To properly formalize the dynamics of algebraic functions we work with \emph{correspondences}.
A correspondence $F : \PP^1 \rightsquigarrow \PP^1$ defined over a field $K$ is a (possibly reducible) $K$-algebraic curve $\C_F \subseteq \PP^1 \times \PP^1$ for which the projections onto each component are finite and surjective.
We may view $F(p)$ as the set of all points $q$ such that $(p,q) \in \C_F$.
There is a natural way to compose correspondences by taking fiber products; see Section \ref{sec: correspondences} for definitions and constructions.
Let $F^n$ denote the $n$th iterate of the correspondence $F$.
The degree of a correspondence is the pair $(d,e)$ of degrees of the two projection maps.
Here is a summarized version of our main result.

\begin{theorem}
\label{thm: main intro}
    Let $K$ be a finitely generated field of characteristic zero.
    Let $F : \PP^1 \rightsquigarrow \PP^1$ be a correspondence defined over $K$ with degree $(d,e)$ such that $d, e \geq 2$ and $\C_{F^{12}}$ is irreducible.
    If there exists an $n \geq 12$ for which there are infinitely many $p \in \PP^1(K)$ such that $F^n(p)$ contains a $K$-rational point, then $F$ belongs to an explicit list of exceptional correspondences.
\end{theorem}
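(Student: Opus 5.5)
The approach is to convert the dynamical question into a question about rational points on the iterate curve and then apply Faltings' theorem. Concretely, $F^n(p)$ contains a $K$-rational point exactly when some $K$-rational point $(p,q)$ lies on $\C_{F^n}\subseteq \PP^1\times\PP^1$. So infinitely many such $p$ yield infinitely many $K$-points on $\C_{F^n}$.

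Since $\C_{F^{12}}$ is irreducible and $n\ge 12$, I would first argue that $\C_{F^n}$ has an irreducible component dominating $\C_{F^{12}}$ carrying infinitely many $K$-points. Ideally $\C_{F^n}$ is itself irreducible; otherwise I would pick a component via the fiber-product construction, which yields a finite map onto $\C_{F^{12}}$. Faltings' theorem over finitely generated fields of characteristic zero then forces that component, and hence its image $\C_{F^{12}}$ (normalized), to have geometric genus $0$ or $1$. Any finite map decreases genus, so the normalizations of $\C_{F^m}$ for $1\le m\le 12$ all have genus $\le 1$.

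The core of the proof is then the classification of correspondences of degree $(d,e)$ with $d,e\ge 2$ whose iterates up to the twelfth all have irreducible genus $\le1$ normalizations.

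\begin{itemize}
\item \textbf{Genus growth.} Write $\C_{F^{m+1}}$ as the fiber product of $\C_{F^m}\to\PP^1$ (second projection) with $\C_F\to\PP^1$ (first projection). Riemann--Hurwitz applied to the degree-$d$ cover $\C_{F^{m+1}}\to\C_{F^m}$ gives
\[2g_{m+1}-2 = d(2g_m-2) + \text{(ramification)}.\]
The ramification is pulled back from the branch locus of $\C_F\to\PP^1$, modulo cancellation where branching of the two factors coincides (Abhyankar's lemma).
\item \textbf{Forcing the genus to stay small.} For $g_m\le 1$ to persist, the ramification must be almost entirely absorbed at each step. The branch data must therefore be postcritically finite in a strong sense: the branch points of the first projection must sit under branch points of the second projection of $\C_{F^m}$ with compatible ramification indices.
\item \textbf{Orbifold formalism.} I would encode this with orbifold Euler characteristics on $\PP^1$ and show they must vanish or be positive. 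This gives the classical short lists of signatures $(2,2,\infty)$, $(2,3,6)$, $(2,4,4)$, $(3,3,3)$, $(2,2,2,2)$, $(\infty,\infty)$, plus spherical ones.
\item \textbf{Identifying the exceptional list.} Correspondences preserving such an orbifold structure come from isogenies or endomorphisms of elliptic curves (Lattès-type correspondences) or from monomial/Chebyshev-type maps such as $x^a=y^b$.
\end{itemize}

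The number $12$ should arise as the length needed for the genus-growth inequalities to stabilize. I would expect, for instance, an lcm of ramification indices from the list above, or a bound on how long cancellation can persist before some branch point escapes.

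The main obstacle is this genus-growth classification: showing rigorously that, unless $F$ is of Lattès, monomial or Chebyshev type, some iterate $m\le 12$ acquires genus $\ge 2$. I would first try an inductive Riemann--Hurwitz count tracking the multiset of local ramification indices over each point along the backward/forward orbits of branch points. I would then bound how many steps can occur with zero net new ramification. The degenerate case $g=1$ with unramified steps needs separate care, via isogenies of elliptic curves.
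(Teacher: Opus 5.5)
Your first step matches the paper. A $K$-point in $F^n(p)$ gives a $K$-point on $\C_{F^n}$, and Faltings gives a component of genus at most one. Every component of $\C_{F^n}$ maps finitely onto the irreducible curve $\C_{F^{12}}$, so each $\C_{F^m}$ with $m\le 12$ is irreducible of genus at most one.

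The gap is everything after that. You flag it yourself as the main obstacle, and it is the actual content of the theorem. Step-by-step Riemann--Hurwitz only tells you that each degree-$d$ step $\C_{F^{m+1}}\to\C_{F^m}$ has total ramification at most $2d$. Your remark that branch points of $f_1$ must sit under suitably ramified points of $g_{1,m}$ points the right way, but you never turn it into a quantitative constraint. Also, $12$ is neither an lcm of ramification indices nor a persistence length.

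The missing idea is a single fiber-product decomposition with very unbalanced degrees. After transposing so that $d\le e$, $\C_{F^{12}}$ is the fiber product of $f_{1,2}$ (degree $D=d^2$) and $g_{1,10}$ (degree $E=e^{10}$).
\begin{enumerate}
\item The pullback of $f_{1,2}$ is a degree-$D$ map from a curve of genus at most one, so it has at most $2D$ critical values.
\item By Abhyankar's lemma, over each critical value $p$ of $f_{1,2}$, all but at most $2D$ points of $g_{1,10}^{-1}(p)$ have ramification index divisible by $m_{f_{1,2}}(p)$.
\item Riemann--Hurwitz for the high-degree map $g_{1,10}$ then yields
\[
\sum_{p}\Bigl(1-\frac{1}{m_{f_{1,2}}(p)}\Bigr)\le\frac{2E-2}{E-2D},
\]
which is $<2+\frac{1}{42}$ once $E>170D-84$.
\item The smallest value above $2$ such a sum can take is $2+\frac{1}{42}$, from $(2,3,7)$. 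So the sum is at most $2$, i.e.\ the Galois closure of $f_{1,2}$ has genus at most one.
\end{enumerate}
Since $d^{10}>170d^2-84$ for all $d\ge 2$ but $2^9<170\cdot 4-84$, this is where $12=2+10$ comes from. Two further ingredients are still needed:
\begin{itemize}
\item an irreducible genus-one $\C_F$ forces $\C_{F^2}$ to be reducible or of genus $>1$ (otherwise $f_1,g_1$ share the cyclic quotient as a common left factor), which pins $\C_{F^2}$ to genus $0$;
\item the census of low-genus maps (finite M\"obius groups and generalized Latt\'es maps), to turn this into the explicit list.
\end{itemize}

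Your anticipated conclusion is also mis-aimed, because the constraint lands on only one projection. What is forced is that $f_{1,2}$ is, up to M\"obius changes, a power, Chebyshev, or generalized Latt\'es map. The other projection $g_1$ need only be compatible with it; for $f_1=x^d$ one gets $g_1=x^eh(x)^d$ with $h$ essentially arbitrary.

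Concretely, take $f_1=x^2$ and $g_1=x^3(x+1)^{2^{12}}$. One checks $f_n=x^2$ and $g_n=x^3(x^{2^{n-1}}+1)^{2^{13-n}}$ with all degrees odd. Hence $\C_{F^n}\cong\PP^1$ for all $n\le 13$, and this $F$ satisfies the hypotheses over $\QQ$. Yet $g_1$ is not a power, Chebyshev or Latt\'es map, and $F$ is not of the form $x^a=y^b$.

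So if ``type'' is read as in your orbifold-preservation bullet, with both projections respecting the structure, the dichotomy you planned to prove inductively is false. It has to be reformulated as a statement about the Galois closure of the smaller-degree projection's two-step iterate.
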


This is proved in two parts as Theorem \ref{thm: main} and Theorem \ref{thm: exceptional classification}; the latter result contains the detailed list of exceptional cases.
To give a sense of this classification, suppose that $F$ is a correspondence satisfying the hypotheses of Theorem \ref{thm: main intro}.
We show that the normalization of $\C_F$ is isomorphic to $\PP^1$ and let $f, g : \PP^1 \to \PP^1$ be the composition of the normalization with the coordinate projections.
Then after a change of coordinates, and with essentially finitely many exceptions, we show that $f$ must be a power map, a Chebyshev polynomial, or a (generalized) Latt\'es map.
The rational function $g$ must fit into a semiconjugation diagram with $f$, which highly restricts $g$.

For example, Let $d \geq 2$, let $e$ be coprime to $d$, let $h(x) \in K(x)$ be a non-constant rational function which is not a $d$th power in $\overline{K}(x)$, and suppose that $m \geq 3$.
Consider the correspondence $F$ defined by the curve $\C_F \cong \PP^1$ parametrized by $(f,g) := (x^d, x^eh(x)^{d^{m-1}})$.
This pair of maps makes the following diagram commute.
\[
    \begin{tikzcd}
    \mathbb{P}^1 \arrow[d, "x^d"'] & & \mathbb{P}^1 \arrow[ll, "x^eh(x^d)^{d^{m-2}}"'] \arrow[d, "x^d"] \\
    \mathbb{P}^1                   & & \mathbb{P}^1 \arrow[ll, "x^eh(x)^{d^{m-1}}"]                    
    \end{tikzcd}
\]
In Theorem \ref{thm: exceptional classification} we show that $\C_{F^n}$ is irreducible for all $n \geq 0$, and that the normalization of $\C_{F^n}$ has genus 0 for $n \leq m$, and has genus $> 1$ for $n > m$.
Thus we may find infinitely many $K$-rational points whose first $m$ iterates are all in $K$, but Faltings's theorem implies only finitely many will have $K$-rational iterates beyond the $m$th iterate.

This example highlights how Faltings's theorem allows us to translate infinitely many $K$-rational iterates into the purely geometric hypothesis that $\C_{F^n}$ has an irreducible component of genus at most one.
The main source of tension that we exploit stems from how restrictive it is for a pair of finite maps to $\PP^1$ to have an irreducible fiber product with genus at most one.
This tension manifests as the following result.

\begin{theorem}
\label{thm: intro thesis}
    Let $f : \C \to \PP^1$ be a finite degree $d$ map from a smooth, complex, projective curve $\C$ and let $g : \PP^1 \to \PP^1$ be a degree $e$ rational function.
    If $e > 170d - 84$ and the fiber product of $f$ and $g$ is irreducible with normalization of genus at most one, then the Galois closure of $f$ has genus at most one.
\end{theorem}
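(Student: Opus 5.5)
The plan is to compare two Riemann--Hurwitz computations: one for the map of $\E$ down to the source of $g$, and one for $g$ itself. Let $\E$ be the normalization of the fiber product of $f$ and $g$. Irreducibility makes $\E$ a smooth connected curve, with a degree $d$ map $\pi : \E \to \PP^1$ to the source of $g$. For each branch point $b$ of $f$, let $a_1,\dots,a_k$ be the ramification indices of $f$ over $b$, and put $m_b = \lcm(a_1,\dots,a_k)$.

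The first step is the local structure of $\pi$. Take $x \in g^{-1}(b)$ with ramification index $r = r_x$. By Abhyankar's lemma, a point of $\C$ over $b$ with index $a$ yields $\gcd(a,r)$ points of $\E$ over $x$, each with index $a/\gcd(a,r)$. So the ramification of $\pi$ over $x$ is
\[
c_x = \sum_i \bigl(a_i - \gcd(a_i,r_x)\bigr),
\]
and $c_x = 0$ exactly when $m_b \mid r_x$. Since $\gen(\E) \le 1$, Riemann--Hurwitz for $\pi$ gives
\[
\sum_b \sum_{x \in g^{-1}(b)} c_x = 2\,\gen(\E) - 2 + 2d \le 2d.
\]
Riemann--Hurwitz for $g$ gives $\sum_b \sum_{x \in g^{-1}(b)} (r_x - 1) \le 2e - 2$.

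The key step is a pointwise inequality: for every $x \in g^{-1}(b)$,
\[
(r_x - 1) + c_x \ge \Bigl(1 - \tfrac{1}{m_b}\Bigr) r_x .
\]
If $m_b \mid r_x$, then $r_x \ge m_b$, so $r_x - 1 \ge r_x - r_x/m_b$. Otherwise $c_x \ge 1$, and the left side is at least $r_x$. Summing over the fiber gives $\sum_{x \in g^{-1}(b)} r_x = e$, and summing over all branch points of $f$ yields
\[
e\Bigl(\sum_b \bigl(1 - \tfrac{1}{m_b}\bigr) - 2\Bigr) \le 2d - 2 .
\]

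Suppose the orbifold Euler characteristic of $(\PP^1, \{m_b\})$ were negative, i.e. $\sum_b (1 - 1/m_b) > 2$. The standard hyperbolic triangle-group gap says this sum then exceeds $2$ by at least $1/42$, the extremal case being $(2,3,7)$. That would give $e \le 84(2d-2)$, contradicting $e > 170d - 84$. The argument yields a slightly stronger bound than stated, so the paper's constant has some slack, and I would recheck the bookkeeping against it.

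Hence $\sum_b (1 - 1/m_b) \le 2$. Finally, the Galois closure $\tilde f : \tilde \C \to \PP^1$ has all ramification indices over $b$ equal to $m_b$, because the local monodromy generator at $b$ has order equal to the lcm of its cycle lengths. Riemann--Hurwitz for $\tilde f$, of degree $N$, then gives
\[
2\,\gen(\tilde \C) - 2 = N\Bigl(-2 + \sum_b \bigl(1 - \tfrac{1}{m_b}\bigr)\Bigr) \le 0,
\]
so $\gen(\tilde \C) \le 1$.

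I expect the main obstacle to be the local analysis in the first step. One must check carefully that normalizing the fiber product really produces $\gcd(a,r)$ branches of index $a/\gcd(a,r)$, including over points where both $f$ and $g$ ramify. One must also check that irreducibility is exactly what allows a single genus bound on $\E$ to be used, rather than a sum over components.
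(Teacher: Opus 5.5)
Your proof is correct, but it takes a different and sharper route than the paper's.

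\textbf{The paper's route.} The paper first proves the uniform inequality of Theorem \ref{thm: bdd genus}. Riemann--Hurwitz for $g$ alone gives $2e-2 \ge \sum_p (e - |g\inv(p)|)$. Then, for each branch value $p$ \emph{separately}, every $q \in g\inv(p)$ with $m_f(p) \nmid e_g(q)$ is a branch point of the pulled-back map $\C' \to \PP^1$, which has at most $2d$ branch points. This gives $|g\inv(p)| \le 2d + (e-2d)/m_f(p)$, hence $\sum_p (1 - 1/m_f(p)) \le (2e-2)/(e-2d)$. The hypothesis $e > 170d-84$ is exactly the condition $(2e-2)/(e-2d) < 2 + \tfrac{1}{42}$.

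\textbf{Your route.} You merge the two Riemann--Hurwitz counts through the pointwise inequality $(r_x - 1) + c_x \ge (1 - 1/m_b)\, r_x$. As a result, the ramification budget $2\gen(\E) - 2 + 2d \le 2d$ of $\pi$ is spent once in total, whereas the paper charges up to $2d$ against every branch value. This gives $e\bigl(\sum_b (1 - 1/m_b) - 2\bigr) \le 2d-2$, so the conclusion already holds for $e > 42(2d-2) = 84(d-1)$. You wrote $84(2d-2)$, a factor-of-two slip, but even that weaker bound contradicts $e > 170d-84$, so the argument is unaffected.

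\textbf{What each approach buys.} The paper's cruder estimate handles each branch value using only the fiber of $g$ over it. It therefore holds uniformly, with $V_{g,d}$ the union of the branch loci of \emph{all} $f \in B_{g,d}$ and $m_{g,d}(p)$ the supremum over them. That uniformity is what yields Corollary \ref{corollary: low genus criteria}(2): at most four branch values shared by all such $f$ at once. Your global-budget argument is tied to a single $f$ and does not immediately give this, but for the stated single-map theorem it gives a better constant.

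\textbf{A small point to state explicitly.} Irreducibility of the fiber product forces $\C$ itself to be irreducible. So the Galois closure of $f$ is a single curve, and your final Riemann--Hurwitz computation applies.
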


Theorem \ref{thm: intro thesis} is from Chapter 5 of the author's Ph.D. thesis \cite{thesis}, which was joint work with Michael Zieve.
Theorem \ref{thm: intro thesis} is simplified for ease of notation; see Theorem \ref{thm: bdd genus} and Corollary \ref{corollary: low genus criteria} for the full statement and proof.

Maps $f : \C \to \PP^1$ with a Galois closure of genus at most one are, in a sense, well-known and classified.
This classification stems from the classification of finite groups of M\"obius transformations, which are the Galois groups in the genus zero Galois closure case, and the theory of elliptic curves.
In Section \ref{sec: low-genus} we carry out this classification in order to tabulate all the possible ramification types of such maps (see Table \ref{table: gen 0 census} and Table \ref{table: genus 1 gal closure}) which we require to prove Theorem \ref{thm: exceptional classification}.
Given how often these maps arise as interesting exceptional cases, these tables may be of broader interest.

Another tool which we only use in a minor way but deserves to be better known is the following theorem due to Michael Fried.

\begin{theorem}[{\cite[Prop. 2]{Fried}}]
\label{thm: frieds thm intro}
    Let $f_i : \C_i \to \D$ for $i = 1, 2$ be finite maps between curves.
    There are decompositions $f_i = g_ih_i$ such that $g_1$ and $g_2$ have the same Galois closure, and the fiber product of $g_1, g_2$ has the same number of irreducible components as the fiber product of $f_1, f_2$.
\end{theorem}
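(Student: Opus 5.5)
The plan is to translate the statement into group theory through monodromy and then build the decompositions $g_i$ by passing to intermediate fields. Assume the curves are over an algebraically closed field of characteristic zero, and let $\Omega$ be the Galois closure of the compositum of the function fields $K(\C_1)$ and $K(\C_2)$ over $K(\D)$. Let $G = \mathrm{Gal}(\Omega/K(\D))$, and let $H_i \le G$ be the subgroup fixing $K(\C_i)$. Write $N_i = \mathrm{core}_G(H_i)$, so that the Galois closure of $f_i$ corresponds to $\Omega^{N_i}$.

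First I will invoke the standard fact that the irreducible components of the normalized fiber product of $f_1$ and $f_2$ are in bijection with the double cosets $H_1 \backslash G / H_2$. Equivalently, they correspond to the orbits of $H_2$ on $G/H_1$. This follows by writing $K(\C_1)\otimes_{K(\D)} K(\C_2)$ as a product of fields. So the task becomes purely group-theoretic. We must find overgroups $A_i \supseteq H_i$ such that $\mathrm{core}_G(A_1) = \mathrm{core}_G(A_2)$ and $|A_1\backslash G/A_2| = |H_1\backslash G/H_2|$. Then $g_i$ is the map corresponding to $\Omega^{A_i}$, and $h_i : \C_i \to \C_i'$ corresponds to the inclusion $H_i \le A_i$.

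The natural construction is $A_1 = H_1 N_2$ and $A_2 = H_2 N_1$, which are subgroups because the $N_i$ are normal. The double coset count is preserved: $H_1 g H_2 \supseteq H_1 g H_2$ enlarged by $N_2$ on the left gives $N_2 H_1 g H_2 = H_1 g N_2 H_2 = H_1 g H_2$, since $N_2 \le H_2$ is normal. Hence $H_1 N_2 \backslash G / H_2 = H_1\backslash G/H_2$. Symmetrically, enlarging $H_2$ by $N_1$ changes nothing, and both enlargements can be done at once by the same computation.

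The cores need not yet agree. So I iterate: replace $(H_1,H_2)$ by $(H_1N_2, H_2N_1)$ and repeat. Each step enlarges the groups while keeping the double coset count unchanged. Since $G$ is finite, the process stabilizes at a pair $(A_1,A_2)$ with $\mathrm{core}(A_2) \le A_1$ and $\mathrm{core}(A_1) \le A_2$. Then $\mathrm{core}(A_2)$ is a normal subgroup contained in $A_1$, so $\mathrm{core}(A_2) \le \mathrm{core}(A_1)$, and symmetrically the reverse inclusion holds. The cores are therefore equal, and so $g_1, g_2$ have the same Galois closure $\Omega^{N}$.

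The main obstacle is the first step: justifying the dictionary between components and double cosets carefully, in particular for reducible fiber products and when taking normalizations. The decomposition $f_i = g_i h_i$ must also actually exist as maps of curves, which follows from the Galois correspondence for function fields. Once that dictionary is in place, the rest is the finite stabilization argument above.
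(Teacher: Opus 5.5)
Your argument is correct and is essentially the paper's. Replacing $H_2$ by $H_2N_1$ is exactly the paper's passage from $X_2$ to its $N_1$-orbit space $N_1X_2$ (since $N_1 g H_2 = g\,N_1H_2$ for normal $N_1$). Your stabilization of the increasing chain replaces the paper's induction on $|X_1|+|X_2|$, in which the paper quotients only one side per step.

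The one difference in scope is that double cosets describe transitive $G$-sets, i.e.\ irreducible $\C_i$. The paper works with arbitrary finite $G$-sets and uses the kernel of the action in place of $\mathrm{core}_G(H_i)$, so it also covers reducible $\C_i$ by the same computation.
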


Fried's statement of this result is quite different but is essentially equivalent to Theorem \ref{thm: frieds thm intro}.
We provide a proof as Theorem \ref{thm: frieds thm} via a translation to a result about finite $G$-sets.
Theorem \ref{thm: frieds thm intro} gives us a starting point for handling reducible fiber products: If the fiber product of $f_1$ and $f_2$ is reducible, then the maps $f_1$ and $f_2$ have nontrivial left factors with the same Galois closure.

\subsection{Related work}

The study of dynamics of correspondences has a long history.
For example, the arithmetic-geometric mean introduced by Lagrange and analyzed deeply by Gauss may be viewed as the dynamics of the correspondence $\C_F : 4xy^2 = (x + 1)^2$.
Within complex dynamics there is a considerable body of work on correspondences; see for example \cite{bellaiche2023self,BogomolovTschinkel2002unramified,buchacher2025finite,bullett1992critically,bullett1994gallery,delaparra,gotou2023dynamical,lomonaco2025algebraic,} and the references therein.
The study of correspondences from the perspective of arithmetic dynamics is more recent.
See Patrick Ingram's work on constructing canonical heights for correspondences \cite{ingram2019canonical}, studying analogs of post-critically finite correspondences \cite{ingram2017critical}, and on arboreal representations of correspondences \cite{ingram2018p}.

Ideally we would like to have a version of Theorem \ref{thm: main intro} which did not require the hypothesis that $\C_{F^{12}}$ is irreducible.
However, this requires an analysis of cases where $\C_{F^n}$ may be reducible with components of genus at most one.
When the projections from $\C_F$ are polynomials, the recent resolution of the Davenport-Lewis-Schinzel conjecture by Behajaina, König, and Neftin \cite{DLS} should give one a foothold in this case, but we leave that for the future.
See also the work of An and Diep \cite{an2013genus}. 

\subsection{Acknowledgments}

The inspiration for this project and my introduction to the dynamics of correspondences came from the \emph{Dynamics of Multiple Maps} workshop at the American Institute of Mathematics in November, 2025.

\section{Foundations and fiber products}

In this paper, $K$ will always be a field of characteristic zero.
With the exception of Theorem \ref{thm: main}, we assume $K$ is algebraically closed and the Lefschetz principle allows us to assume without loss of generality that $K = \CC$.

For us a \textbf{curve} $\C$ is a smooth, projective scheme defined over $K$ of pure dimension one.
Every such curve is a finite disjoint union of \textbf{irreducible curves}.
Let $\sC$ denote the category of all curves with finite maps; we will only be working with finite maps and thus simply refer to them as maps.
The category $\sC$ is dual equivalent, under the standard algebro-geometric dictionary, to the category of finite products of finitely generated extensions of $K$ with transcendence degree one and $K$-algebra homomorphisms.

\subsection{Fiber products}
Let $f_1 : \C_1 \to \D$ and $g_1 : \C_2 \to \D$ be maps between curves.
The fiber product of $f_1$ and $g_1$ is the potentially singular curve
\[
    \B := \{(p,q) \in \C_1 \times \C_2 : f_1(p) = g_1(q)\}.
\]
Let $\widetilde \B \to \B$ be the normalization of $\B$, and let $g_2 : \widetilde \B \to \C_1$ and $f_2 : \widetilde \B \to \C_2$ be the composition of the normalization with the natural projections so that $f_1 g_2 = g_1 f_2$.
The curve $\widetilde\B$ together with the maps $f_2, g_2$ is the categorical fiber product of $f_1$ and $g_1$ in $\sC$.
When we refer to the fiber product of maps between curves, we mean this normalized fiber product in $\sC$.
Abhyankar's lemma characterizes the points on a fiber product and their ramification.
If $f : \C \to \D$ is a finite map and $p \in \C$, let $e_f(p)$ denote the ramification index of $f$ at $p$.

\begin{lemma}[{Abhyankar's lemma}]
\label{lemma: abhyankar}
    Let $f_1 : \C_1 \to \D$ and $g_1 : \C_2 \to \D$ be maps between curves and let $\B$ denote the fiber product and let $h : \B \to \D$ be the map $h := f_1g_2 = g_1f_2$.
    Suppose that $(p,q) \in \C_1 \times \C_2$ is a point such that $f_1(p) = g_1(q)$.
    There are $\gcd(e_{f_1}(p), e_{g_1}(q))$ points $r \in \B$ such that $f_2(r) = q$ and $g_2(r) = p$ and
    \(
        e_h(r) = \lcm(e_{f_1}(p), e_{g_1}(q)).
    \)
\end{lemma}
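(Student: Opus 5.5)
The statement is local, so the plan is to reduce to a computation in complete local rings. Put $t = f_1(p) = g_1(q) \in \D$ and let $z$ be a uniformizer of $\D$ at $t$. Write $a = e_{f_1}(p)$, $b = e_{g_1}(q)$, $m = \gcd(a,b)$ and $\ell = \lcm(a,b)$. Since $K$ is algebraically closed of characteristic zero, we can extract roots of units in complete local rings. This gives uniformizers $u$ at $p$ and $v$ at $q$ with $z = u^a$ and $z = v^b$; for example, if $f_1^*z = u_0^a\cdot(\text{unit})$, then absorb an $a$th root of the unit into $u_0$.

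The points $r$ of the normalized fiber product lying over $(p,q)$ correspond to the minimal primes of the completed local ring of $\B$ at $(p,q)$, or equivalently to the branches of $\B$ through $(p,q)$. That completed ring is
\[
\widehat{\O}_{\C_1,p}\,\widehat\otimes_{\widehat{\O}_{\D,t}}\,\widehat{\O}_{\C_2,q} \;\cong\; K[[u,v]]/(u^a - v^b).
\]
Two facts justify this. First, the projections are finite, so completion commutes with the fiber product locally at $(p,q)$. Second, the normalization $\widetilde\B\to\B$ is finite and birational on each component, so the points of $\widetilde \B$ above $(p,q)$ are exactly the branches there.

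Next factor $u^a - v^b$. Write $a = m a'$ and $b = m b'$ with $\gcd(a',b') = 1$. Then
\[
u^a - v^b = \prod_{\zeta^m = 1}\bigl(u^{a'} - \zeta v^{b'}\bigr),
\]
and the $m$ factors are pairwise coprime because the $\zeta$ are distinct. Each factor $u^{a'} - \zeta v^{b'}$ is irreducible in $K[[u,v]]$ since $\gcd(a',b')=1$. To see this, take an integer $s$ with $s^{b'} = \zeta$ (possible since $K$ is algebraically closed) and parametrize the branch by $u = s w^{b'}$, $v = w^{a'}$. The map $K[[u,v]] \to K[[w]]$ has kernel exactly $(u^{a'} - \zeta v^{b'})$; coprimality gives that the image has normalization $K[[w]]$ and that the parametrization is generically injective. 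So there are exactly $m$ branches, and hence exactly $m = \gcd(a,b)$ points $r$ with $g_2(r) = p$ and $f_2(r) = q$.

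For the ramification index at such an $r$, the local parameter is $w$, and $h^*z = u^a = s^{a} w^{a b'}$. Here $a b' = m a' b' = \lcm(a,b)$, so $e_h(r) = \lcm(a,b)$. As a by-product we also get $e_{g_2}(r) = b'$ and $e_{f_2}(r) = a'$.

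The step that needs the most care is the identification of points of the normalization with irreducible factors of the completed local equation. This uses excellence of curves over $K$, so that the completion of a reduced ring stays reduced and normalization commutes with completion. As a cross-check, a degree count confirms the answer: $\sum_r e_{g_2}(r) = m b' = b$, which is the local degree of $g_2$ over $p$. This matches the fact that $g_2$ has degree equal to $\deg g_1$ and that its local structure over $p$ mirrors that of $g_1$ over $t$.
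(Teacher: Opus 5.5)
Your proof is correct, but it takes a different route from the paper. The paper gives no argument of its own: it cites Stichtenoth's Theorem 3.9.1, the function-field form of Abhyankar's lemma. That result says that for a place $P'$ of a compositum $F_1F_2$ lying over $P$, one has $e(P'|P) = \lcm(e(P_1|P), e(P_2|P))$, provided one of the extensions $P_i|P$ is tame.

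You instead compute the completed local ring $K[[u,v]]/(u^a - v^b)$ of the fiber product at $(p,q)$ and read off its branches. This gives both halves of the statement at once. The cited theorem only gives the $\lcm$ formula; the count of $\gcd(a,b)$ points over $(p,q)$ still needs a local degree count, which is essentially your completed tensor product computation ($\sum_r e_{g_2}(r) = b$ with each $e_{g_2}(r) = b'$). You also get $e_{g_2}(r) = b'$ and $e_{f_2}(r) = a'$ explicitly, which is how the lemma is used later in the paper (though these also follow from $e_h(r) = \lcm(a,b)$ by multiplicativity).

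The cost is that you need $K$ algebraically closed of characteristic zero. This is used to normalize $f_1^*z = u^a$ and $g_1^*z = v^b$, and to have $m$ distinct $m$th roots of unity. You also need the excellence input that identifies points of the normalization with branches. Stichtenoth's version works over more general constant fields and in positive characteristic under tameness. Since the paper works over $\CC$, your hypotheses cost nothing here.

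One slip to fix: with $u = s w^{b'}$ and $v = w^{a'}$ you need $s \in K$ with $s^{a'} = \zeta$, not an integer with $s^{b'} = \zeta$. Otherwise $u^{a'} - \zeta v^{b'}$ does not vanish on your parametrization.

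The irreducibility step is also only sketched. Here is a clean finish.
\begin{enumerate}
\item The kernel of $K[[u,v]] \to K[[w]]$ is a height-one prime, hence principal, and it contains $u^{a'} - \zeta v^{b'}$.
\item The fraction field of the image contains $w^{a'}$ and $w^{b'}$. Since $\gcd(a',b') = 1$, it contains $w$, so it equals $K((w))$, which has degree $a'$ over $K((v))$.
\item By Weierstrass division, $K[[u,v]]/(u^{a'} - \zeta v^{b'})$ is free of rank $a'$ over $K[[v]]$.
\item A proper factor of $u^{a'} - \zeta v^{b'}$ would give a quotient of rank less than $a'$. So the kernel is exactly $(u^{a'} - \zeta v^{b'})$, and this element is irreducible.
\end{enumerate}
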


\begin{proof}
    See, for example, Stichtenoth \cite[Thm. 3.9.1]{Stich}.
\end{proof}

Suppose $f_1 : \C_1 \to \D$ and $g_1 : \C_2 \to \D$ are a pair of maps between curves.
Even if all the curves are irreducible, the fiber product may be reducible.
The next lemma provides one simple sufficient condition for the fiber product to be irreducible.

\begin{lemma}
\label{lemma: coprime degree implies irreducible}
    Let $f_1 : \C \to \D$ and $g_1 : \C' \to \D$ be a pair of finite maps between irreducible curves with degrees $d, e$ respectively.
    If $d$ and $e$ are coprime, then the fiber product of $f_1$ and $g_1$ is irreducible.
\end{lemma}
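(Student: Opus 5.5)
The plan is a degree count: every irreducible component of the fiber product $\widetilde\B$ has degree divisible by both $d$ and $e$ over $\D$, hence by $de$, while all of $\widetilde\B$ has degree only $de$.

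First, record the degrees. The map $h := f_1 g_2 = g_1 f_2 : \widetilde\B \to \D$ has total degree $de$. We can see this from function fields, $K(\C)\otimes_{K(\D)} K(\C')$ being a $K(\D)$-algebra of dimension $de$ whose normalization is the product of the function fields of the components. Alternatively, we can count points over a general point $t \in \D$: the map $h$ is unramified there, so by Lemma~\ref{lemma: abhyankar} each pair $(p,q)$ with $f_1(p) = g_1(q) = t$ contributes exactly one point of $\widetilde\B$. There are $de$ such pairs.

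Second, take any irreducible component $\B_0$ of $\widetilde\B$. Since maps in $\sC$ are finite and surjective onto irreducible curves, $g_2|_{\B_0} : \B_0 \to \C$ and $f_2|_{\B_0} : \B_0 \to \C'$ are both non-constant, and $h|_{\B_0}$ factors as $f_1 \circ g_2|_{\B_0}$ and as $g_1 \circ f_2|_{\B_0}$. Because degrees are multiplicative under composition, $\deg(h|_{\B_0})$ is divisible by both $d$ and $e$. As $\gcd(d,e)=1$, it is divisible by $de$.

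Third, the degrees of $h$ on the components add up to $de$. Since each is a positive multiple of $de$, there is exactly one component, and $\widetilde\B$ is irreducible.

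The only delicate point is justifying that $g_2|_{\B_0}$ and $f_2|_{\B_0}$ are non-constant. This follows because $\B_0$ dominates $\D$ via $h$: $\widetilde\B$ is the normalization of a curve that is finite over $\D$, so no component lies in a fiber. Equivalently, in the algebraic formulation each factor field of $K(\C)\otimes_{K(\D)}K(\C')$ contains both $K(\C)$ and $K(\C')$, hence has degree divisible by $d$ and by $e$ over $K(\D)$.
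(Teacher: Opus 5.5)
Your proof is correct and is the same degree-count argument the paper uses: each irreducible component's degree over $\mathcal{D}$ is divisible by both $d$ and $e$, hence by $de$, while the whole fiber product has degree $de$ (the paper only uses that it is at most $de$). You also spell out two points the paper leaves implicit: why the total degree is exactly $de$, and why the projections restricted to each component are non-constant.
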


\begin{proof}
    Let $\B$ be an irreducible component of the fiber product of $f_1$ and $g_1$, and suppose that $f_2, g_2$ are maps from $\B$ such that $h := f_1g_2 = g_1f_2$.
    Then $\deg h$ is divisible by both $d$ and $e$, hence by $de$ since we assumed $d$ and $e$ are coprime.
    On the other hand, let $h' : \B' \to \D$ be the map from the full fiber product of $f_1$ and $g_1$ to $\D$.
    By construction, $\deg h' \leq de$.
    But $h'$ factors through $h$, implying that $\deg h' = de$.
    Thus $h = h'$ and $\B = \B'$, which is to say that $\B'$ is irreducible.
\end{proof}

\subsection{Galois maps}

Let $f : \C \to \D$ be a map and define $\deck(f)$ to be the group of all automorphisms $g : \C \to \C$ such that $f g = f$.
Let $G \subseteq \deck(f)$ be a finite subgroup and let $\C/G$ be the normalization of the quotient of $\C$ by $G$.
We say $f$ is \textbf{Galois} with \textbf{Galois group} $G$ if there exists an isomorphism $g : \C/G \to \D$ such that $f = gh$ where $h : \C \to \C/G$ is the natural quotient map.
If $\C$ and $\D$ are irreducible, then $G = \deck(f)$, but otherwise $G$ may be a proper subgroup.
The Galois closure of $f : \C \to \D$ is the final Galois map $\bar f : \overline \C \to \D$ which factors through $f$; the existence and uniqueness up to isomorphism follows from standard Galois theory of field extensions via duality.

If $\tilde f : \B \to \D$ is a Galois map which factors through $f$ and $G$ is the Galois group of $\tilde f$, then each $g \in G$ induces an isomorphism $g : \C^g \to \C$.
Then $f^g := g\inv f g : \C^g \to \D$ is the conjugate of $f$ by $g$.
The Galois closure of $f$ may be constructed as the component of the fiber product of all the conjugates of $f$ through which $\tilde f$ factors.
The following lemma is a direct consequence of Abhyankar's lemma.

\begin{lemma}
\label{lemma: galois ramification}
    Let $f: \C \rightarrow \D$ be a map between curves and let $\bar f: \overline \C \rightarrow \D$ be the Galois closure of $f$. 
    Then for each $p \in \D$ and each $r \in \bar f\inv(p)$ we have
    \[
        e_{\bar f}(r) =m_{f}(p) := \lcm_{q \in f\inv(p)} e_f(q).
    \]
    In particular, $\bar f$ has the same critical values as $f$.
\end{lemma}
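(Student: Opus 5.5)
We prove Lemma \ref{lemma: galois ramification} by realizing $\bar f$ through iterated fiber products of conjugates of $f$ and applying Abhyankar's lemma (Lemma \ref{lemma: abhyankar}) at each step. Recall that the Galois closure $\overline{\C}$ is a component of the fiber product of all the conjugates $f^g$, $g \in G$. Each conjugate $f^g$ is isomorphic to $f$ over $\D$, so its multiset of ramification indices above any $p \in \D$ equals that of $f$.

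The first step is an induction on the number of factors. Form the iterated fiber product $\B_k$ of $f^{g_1}, \dots, f^{g_k}$ over $\D$, with structure map $h_k : \B_k \to \D$. The claim is that for every point $r \in \B_k$ above $p$,
\[
    e_{h_k}(r) = \lcm\bigl(e_{f^{g_1}}(q_1), \dots, e_{f^{g_k}}(q_k)\bigr),
\]
where $q_i$ is the image of $r$ in the $i$th factor. The inductive step is exactly Lemma \ref{lemma: abhyankar}, applied to $h_{k-1}$ and $f^{g_k}$: a point of $\B_k$ sits over a pair $(r', q_k)$ and has ramification index $\lcm(e_{h_{k-1}}(r'), e_{f^{g_k}}(q_k))$. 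Restricting to the component $\overline{\C}$ does not change ramification indices, since passing to a union of components of a smooth curve is an open and closed immersion. Hence for $r \in \bar f^{-1}(p)$, $e_{\bar f}(r)$ is the lcm of the ramification indices of $f$ at the points $g_i^{-1}\cdot(\text{image of } r)$.

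The second step shows that this lcm equals $m_f(p)$. Since $\bar f$ is Galois, $G$ acts transitively on $\bar f^{-1}(p)$ (at least on each component, and we may take $\overline{\C}$ irreducible when $\C,\D$ are irreducible; otherwise we argue componentwise). Therefore $e_{\bar f}$ is constant on the fiber. Let $\pi : \overline{\C} \to \C$ be the factorization with $f\pi = \bar f$. Since $\pi$ is surjective onto the relevant component, every $q \in f^{-1}(p)$ is $\pi(r)$ for some $r$, and $e_{\bar f}(r) = e_f(q)\, e_\pi(r)$ is divisible by $e_f(q)$. Constancy then gives that $e_{\bar f}$ is divisible by every $e_f(q)$, hence by $m_f(p)$. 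Conversely, the first step shows each $e_{\bar f}(r)$ is an lcm of some of the values $e_f(q)$, so it divides $m_f(p)$. Together these give $e_{\bar f}(r) = m_f(p)$.

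The final claim follows immediately: $p$ is a critical value of $\bar f$ if and only if $m_f(p) > 1$, which holds if and only if some $e_f(q) > 1$, that is, if and only if $p$ is a critical value of $f$. The main obstacle is bookkeeping in the reducible case, namely ensuring transitivity of $G$ on fibers of the relevant component. I would handle this by reducing to irreducible $\C$ and $\D$, treating each component separately, since ramification is local.
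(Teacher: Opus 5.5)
Your proof is correct and takes the same route as the paper. The paper gives no separate proof; it simply calls the lemma a direct consequence of Abhyankar's lemma applied to $\overline{\C}$, viewed as a component of the fiber product of the conjugates of $f$, which is your first step and gives $e_{\bar f}(r) \mid m_f(p)$.

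Your second step gets the reverse divisibility $m_f(p) \mid e_{\bar f}(r)$ from the transitivity of $G$ on $\bar f\inv(p)$ together with $e_{\bar f}(r) = e_f(\pi(r))\,e_\pi(r)$. This makes explicit what the paper leaves implicit, namely that the lcm really runs over all of $f\inv(p)$. Your hedge about reducible curves is unnecessary, since $\D \cong \overline{\C}/G$ already forces $G$ to act transitively on each full fiber of $\bar f$.
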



We say a map $f : \C \to \D$ is \textbf{evenly ramified} if for all $p \in \D$ and all $q \in f\inv(p)$ we have $e_f(q) = m_f(p)$.
Lemma \ref{lemma: galois ramification} tells us that Galois maps are evenly ramified.
The following lemma provides a converse for rational functions.

\begin{lemma}
\label{lemma: Galois duck}
    A rational function $f : \PP^1 \to \PP^1$ is evenly ramified if and only if $f$ is Galois.
\end{lemma}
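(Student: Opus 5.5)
The direction ``Galois implies evenly ramified'' is already recorded after Lemma \ref{lemma: galois ramification}, so only the converse needs work. Assume $f : \PP^1 \to \PP^1$ of degree $d$ is evenly ramified. Let $\bar f : \overline{\C} \to \PP^1$ be its Galois closure, with group $G$ and $H \le G$ the subgroup with $\overline{\C}/H \cong \PP^1$ and $f$ the induced map. The plan is to compare Riemann--Hurwitz for $f$ and for $\bar f$, show that $\overline{\C}$ has genus $0$, and then use the structure of finite subgroups of M\"obius transformations to conclude $H = 1$.

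First, Riemann--Hurwitz for $f$ gives $-2 = -2d + \sum_{p} \sum_{q \in f\inv(p)} (e_f(q) - 1)$. Since $f$ is evenly ramified, over each critical value $p$ there are exactly $d/m_f(p)$ points, each with index $m_f(p)$. Hence $2 - 2/d = \sum_p (1 - 1/m_f(p))$.

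Next, Lemma \ref{lemma: galois ramification} says every point of $\bar f\inv(p)$ has index $m_f(p)$, and $\bar f$ has the same critical values. Writing $N = \deg \bar f = |G|$, Riemann--Hurwitz for $\bar f$ gives
\[
2g(\overline{\C}) - 2 = N\Bigl(-2 + \sum_p \bigl(1 - 1/m_f(p)\bigr)\Bigr) = -2N/d .
\]
So $g(\overline{\C}) = 1 - N/d = 1 - [G:H]\,|H|/d$. But $[G:H] = d$, so $g(\overline{\C}) = 1 - |H| \le 0$. This forces $|H| = 1$, so $f = \bar f$ is Galois.

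The main point to check is the identity $[G:H] = \deg f$ together with the bookkeeping that the genus formula applies to $\overline{\C}$, which may be reducible. The Galois closure of a map between irreducible curves is irreducible by construction, as a component, so the formula is valid there. With that in place the argument is purely numerical. It also shows why the hypothesis that the source is $\PP^1$ is essential: for a higher-genus source the left side of the first equation changes, and the conclusion fails.
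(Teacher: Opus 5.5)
Your proof is correct, but it takes a slightly different route from the paper's. The paper factors $\bar f = f g$ with $g : \overline{\C} \to \PP^1$ and argues locally. By Lemma \ref{lemma: galois ramification} and even ramification, $e_{\bar f}(r) = m_f(p) = e_f(g(r))$ for every $r$, so multiplicativity of ramification indices gives $e_g(r) = 1$; that is, $g$ is unramified. The paper then uses the fact that $\PP^1$ has no nontrivial connected unramified covers.

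You instead make a global Euler characteristic count. Riemann--Hurwitz for $f$, together with even ramification, gives $\sum_p (1 - 1/m_f(p)) = 2 - 2/d$. Feeding this into Riemann--Hurwitz for $\bar f$ gives $\gen(\overline{\C}) = 1 - |H|$. That identity is exactly Riemann--Hurwitz for the degree-$|H|$ cover $g$ once one knows $g$ is unramified, so both proofs rest on the same fact.

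What each buys: your version packages the unramifiedness of $g$ and the simple connectivity of $\PP^1$ into a single numerical computation, and it gives $\gen(\overline{\C}) = 0$ as a byproduct. The paper's pointwise argument is shorter. It also isolates where the source being $\PP^1$ matters: only in the final step, which is why the analogous statement fails for genus one sources, where $g$ can be a nontrivial isogeny.
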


\begin{proof}
    We only need to prove the forward direction.
    If $f$ is evenly ramified, then so are all the conjugates of $f$.
    Let $\bar f : \C \to \PP^1$ be the Galois closure and suppose that $\bar f = fg$ is the factorization through $f$.
    Then Lemma \ref{lemma: galois ramification} implies that $g : \C \to \PP^1$ is unramified.
    The only unramified covers of $\PP^1$ are isomorphisms, hence $g$ is an isomorphism and $f$ is Galois.
\end{proof}

\subsection{Fried's Theorem}

Theorem \ref{thm: frieds thm} is equivalent to a result proved by Fried \cite[Prop. 2]{Fried}.

\begin{theorem}[Fried]
\label{thm: frieds thm}
    Let $f_i : \C_i \to \D$ for $i = 1, 2$ be maps between curves.
    There are decompositions $f_i = g_ih_i$ such that $g_1$ and $g_2$ have the same Galois closure, and the fiber product of $g_1, g_2$ has the same number of irreducible components as the fiber product of $f_1, f_2$.
\end{theorem}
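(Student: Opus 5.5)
The plan is to translate everything into finite $G$-sets and prove a purely group-theoretic statement. First reduce to $\C_1,\C_2,\D$ irreducible. If $\D$ is reducible, the fiber product decomposes over the components of $\D$, so it suffices to treat one component at a time. Within a component, a reducible $\C_i$ splits $f_i$ into a disjoint union, and the fiber product is the disjoint union over pairs of components. Here I would either reduce to irreducible pieces or allow $\C_i$ to correspond to non-transitive $G$-sets; the cleanest route is to work with arbitrary finite $G$-sets from the start.

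Fix a Galois map $\tilde f : \B \to \D$ with group $G$ through which both $f_1$ and $f_2$ factor, for instance the Galois closure of the fiber product of $f_1,f_2$ over an irreducible $\D$. By the Galois correspondence (dual to the field dictionary), covers of $\D$ dominated by $\tilde f$ correspond to finite $G$-sets. The cover $f_i$ corresponds to $X_i$, transitive exactly when $\C_i$ is irreducible. A factorization $f_i = g_ih_i$ corresponds to a $G$-equivariant surjection $X_i \to Y_i$. The fiber product of $f_1,f_2$ corresponds to $X_1\times X_2$ with the diagonal action, and its irreducible components correspond to the $G$-orbits on $X_1 \times X_2$. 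Finally, the Galois closure of the cover attached to $Y$ corresponds to $G/N_Y$, where $N_Y$ is the kernel of the action on $Y$. So the theorem becomes the following statement: given $G$-sets $X_1,X_2$, there are quotients $X_i \to Y_i$ with $\ker(G\to \mathrm{Sym}\,Y_1)=\ker(G\to\mathrm{Sym}\,Y_2)$ and $|(Y_1\times Y_2)/G| = |(X_1\times X_2)/G|$. For nontransitive $X_i$ I will take "same Galois closure" to mean the same kernel.

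For the group-theoretic step, let $N_i$ be the kernel of $G$ acting on $X_i$, and set $Y_1 := X_1/N_2$ and $Y_2 := X_2/N_1$. These are $G$-sets because $N_j$ is normal. The $G$-orbits on $X_1\times X_2$ are the $G$-orbits on $(X_1\times X_2)/(N_1N_2)$, since $N_1N_2$ is normal in $G$ and acts trivially on the orbit set. Moreover $(X_1\times X_2)/(N_1\times N_2) = X_1/N_2\times X_2/N_1$, because $N_1$ fixes $X_1$ and $N_2$ fixes $X_2$, so quotienting by $N_1N_2$ acting diagonally is the same as quotienting each factor separately. Hence the orbit counts agree. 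I then iterate: replace $X_i$ by $Y_i$, recompute the kernels, which only grow, and repeat. Finiteness of $G$ forces stabilization, and at the fixed point $X_1/N_2 = X_1$, so $N_2\subseteq N_1$; symmetrically $N_1 \subseteq N_2$, hence the kernels coincide.

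The main obstacle is the bookkeeping in the dictionary rather than the group theory. One must check that components of the normalized fiber product match orbits on $X_1\times X_2$, for example via Abhyankar's Lemma \ref{lemma: abhyankar} generically or directly via tensor products of function fields. One must also make sure "same Galois closure" is interpreted correctly when the $Y_i$ are nontransitive, or reduce to transitive $X_i$. I would verify the orbit correspondence using the function field description $K(\C_1)\otimes_{K(\D)}K(\C_2)$ decomposing according to double cosets $H_1\backslash G/H_2$.
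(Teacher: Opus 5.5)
Your proposal is correct and follows the paper's route: you prove the $G$-set statement of Theorem~\ref{thm: fried gset} by quotienting each $X_i$ by the kernel of the action on the other factor, which is bijective on orbits of $X_1\times X_2$ because that kernel fixes the other factor pointwise. The differences are cosmetic. The paper quotients one side at a time and inducts on $|X_1|+|X_2|$, whereas you quotient both sides at once, using that the diagonal $N_1N_2$-orbit of $(x_1,x_2)$ is $N_2x_1\times N_1x_2$, and iterate; you also spell out the Galois dictionary that the paper only calls a translation via Galois theory.
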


Our Theorem \ref{thm: frieds thm} and Fried's \cite[Prop. 2]{Fried} are stated quite differently.
Both results may be seen as consequences of a general result about products of finite $G$-sets which we prove below.

Let $G$ be a group and let $X$ be a $G$-set; we write $gx$ to denote the action of $g \in G$ on $x \in X$.
We say $N \unlhd G$ is a \textbf{normal stabilizer} of $X$ if $N$ fixes every point in $X$.
The \textbf{Galois group} of $X$ is the maximal normal stabilizer of $X$.
If $X_1, X_2$ have the same Galois group, then we say that $X_1, X_2$ have the \textbf{same Galois closure}.
Note that if $h: X \rightarrow Y$ is a $G$-map, then every orbit of $X$ is mapped onto an orbit of $Y$, giving us a well-defined function from the orbits of $X$ to the orbits of $Y$.

\begin{theorem}
\label{thm: fried gset}
    Let $X_1, X_2$ be finite $G$-sets.
    Then for $i = 1, 2$, there are $G$-sets $Y_i$ and $G$-maps $h_i : X_i \to Y_i$ such that 
    \begin{enumerate}
        \item $Y_1, Y_2$ have the same Galois closure,
        \item $h_1 \times h_2 : X_1 \times X_2 \to Y_1 \times Y_2$ is bijective on orbits.
    \end{enumerate}
\end{theorem}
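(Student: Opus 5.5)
The plan is to build $Y_1, Y_2$ by repeatedly collapsing each $G$-set by the kernel of the action on the other. We stop when the two kernels agree. For a finite $G$-set $X$, write $N(X) \unlhd G$ for its Galois group, i.e.\ the kernel of the action map $G \to \mathrm{Sym}(X)$. It is indeed the maximal normal stabilizer.

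\textbf{Basic step.} Let $N_1 = N(X_1)$ and $N_2 = N(X_2)$. Since $N_2$ is normal in $G$, the set of $N_2$-orbits $X_1/N_2$ carries a well-defined $G$-action, and the quotient $X_1 \to X_1/N_2$ is a $G$-map. Define $X_2 \to X_2/N_1$ similarly. I claim the product map
\[
X_1 \times X_2 \to X_1/N_2 \times X_2/N_1
\]
is bijective on orbits.

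Surjectivity is clear, since the map is already surjective on points. For injectivity, suppose the images of $(x_1,x_2)$ and $(x_1',x_2')$ lie in the same $G$-orbit. Replacing $(x_1',x_2')$ by a $G$-translate, we may assume $x_1' = n_2 x_1$ and $x_2' = n_1 x_2$ with $n_i \in N_i$. Now $n_2$ fixes $X_2$ pointwise, so $n_2 (x_1,x_2) = (n_2x_1, x_2)$. Likewise $n_1$ fixes $X_1$ pointwise, so $n_1 n_2 (x_1,x_2) = (n_2 x_1, n_1 x_2) = (x_1',x_2')$. Hence the two points lie in the same orbit.

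\textbf{Iteration.} Set $X_i^{(0)} = X_i$, and let $X_1^{(k+1)} = X_1^{(k)}/N(X_2^{(k)})$ and $X_2^{(k+1)} = X_2^{(k)}/N(X_1^{(k)})$. The composites $X_i \to X_i^{(k)}$ are $G$-maps. A composite of product maps that are each bijective on orbits is again bijective on orbits, because the induced maps on orbit sets compose. Hence condition (2) holds at every stage.

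The cardinalities $|X_i^{(k)}|$ are non-increasing positive integers, so after finitely many steps both stabilize. At such a stage the quotient maps $X_1^{(k)} \to X_1^{(k)}/N(X_2^{(k)})$ and $X_2^{(k)} \to X_2^{(k)}/N(X_1^{(k)})$ are bijections. This means $N(X_2^{(k)})$ acts trivially on $X_1^{(k)}$, so $N(X_2^{(k)}) \subseteq N(X_1^{(k)})$, and symmetrically the reverse inclusion holds. Taking $Y_i = X_i^{(k)}$ and $h_i$ the composite quotient maps therefore gives $Y_1, Y_2$ with the same Galois group, which is (1).

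\textbf{Main obstacle.} The one step needing care is termination. A single collapse may enlarge the kernels, so that $N(X_1/N_2)$ can be strictly bigger than $N_1 N_2$, and the two new kernels need not coincide. This is why one step is not enough and we iterate. Finiteness of the $X_i$ makes the iteration stop, and the stabilization criterion (quotient maps are bijections) is exactly the equality of kernels.
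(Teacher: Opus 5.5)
Your proof is correct and is essentially the paper's argument: quotient one $G$-set by the Galois group of the other, and use that this normal subgroup fixes the other factor pointwise to get bijectivity on orbits, with finiteness guaranteeing termination. The only differences are cosmetic. You collapse both sides at once and detect termination when the cardinalities stop dropping, which directly gives mutual containment of the kernels. The paper instead collapses only $X_2$ by $N_1$ when $N_1 \not\subseteq N_2$ and inducts on $|X_1|+|X_2|$.
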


\begin{proof}
	We proceed by induction on $|X_1| + |X_2|$.
    Let $N_i$ be the Galois group of $X_i$.
    If $N_1 = N_2$, then we set $X_i = Y_i$ and $h_i = 1_{X_i}$ and we are done.
    In particular, this holds in the base case when $|X_1| + |X_2| = 2$.
    Now suppose that $|X_1| + |X_2| > 2$ and that $N_1 \not\subseteq N_2$.
    Let $X_2' := N_1X_2$ and let $h_2'' : X_2 \to X_2'$ be the natural $N_1$-orbit map.
    Then $N_1 \not\subseteq N_2$ implies $N_1$ is not a normal stabilizer of $X_2$, hence that $|X_2'| < |X_2|$.
    Then $|X_1| + |X_2'| < |X_1| + |X_2|$ and our inductive hypothesis implies that the conclusion holds for $X_1, X_2'$.
    Hence there are $Y_1, Y_2$ and maps $h_1 : X_1 \to Y_1$ and $h_2' : X_2' \to Y_2$ such that $Y_1, Y_2$ have the same Galois closure and $h_1 \times h_2'$ is bijective on orbits.
    Let $h_2 := h_2'h_2'' : X_2 \to Y_2$.
    To finish the proof it suffices to show that $1_{X_1} \times h_2''$ is bijective on orbits.

    Since $1_{X_1}$ and $h_2''$ are both surjective, $1_{X_1} \times h_2''$ is surjective on orbits. 
    Suppose $(x_1,x_2), (x_1', x_2')$ are points in $X_1 \times X_2$ whose image under $1_{X_1} \times h_2''$ lie in the same orbit. 
    Then there exists a $g\in G$ such that
	\[
		g(x_1, N_1x_2) = (x_1', N_1x_2').
	\]
	Hence $g x_1 = x_1'$ and $g N_1x_2 = N_1x_2'$. 
    So there exists $n\in N_1$ for which $gn x_2 = x_2'$. 
    Since $N_1$ is a normal stabilizer of $X_1$, we have 
	\(
		gn x_1 = g x_1 = x_1'.
	\)
	So $gn (x_1, x_2) = (x_1', x_2')$ implying that $(x_1, x_2)$ and $(x_1', x_2')$ are in the same orbit of $X_1 \times X_2$.
    Therefore $1_{X_1} \times h_2''$ is injective, hence bijective, on orbits.
\end{proof}

Theorem \ref{thm: frieds thm} is the translation of Theorem \ref{thm: fried gset} via Galois theory.
The following corollary highlights how we may use Theorem \ref{thm: frieds thm} to analyze reducible fiber products.

\begin{corollary}
\label{cor: fried}
    Let $f_i : \C_i \to \D$ for $i = 1, 2$ be maps between curves.
    If the fiber product of $f_1, f_2$ is reducible, then there are decompositions $f_i = g_ih_i$ such that $g_1, g_2$ have the same Galois closure and both have degree at least 2.
\end{corollary}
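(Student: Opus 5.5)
We apply Theorem \ref{thm: frieds thm} to $f_1, f_2$. It gives decompositions $f_i = g_ih_i$ with two properties: $g_1, g_2$ have the same Galois closure, and the fiber product of $g_1, g_2$ has the same number of irreducible components as the fiber product of $f_1, f_2$. It then remains to show that neither $g_i$ can have degree $1$. We argue by contradiction: suppose $\deg g_1 = 1$ (the case $\deg g_2 = 1$ is symmetric).

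Assume, as is implicit in the statement, that the curves involved are irreducible. If $\deg g_1 = 1$, then $g_1$ is an isomorphism onto the irreducible curve $\D$. Its Galois closure is the identity $\D \to \D$, because an isomorphism is already Galois with trivial group. Since $g_2$ has the same Galois closure, the Galois closure of $g_2$ also has degree $1$. As the Galois closure factors through $g_2$, this forces $\deg g_2 = 1$.

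In the $G$-set language of Theorem \ref{thm: fried gset}, the same step reads as follows. Take $G$ to be the Galois group of a common Galois cover dominating everything, acting transitively on the fibers. Then $Y_1$ is a one-point set, so its Galois group is all of $G$. Since $Y_2$ has the same Galois group, $G$ acts trivially on $Y_2$. Transitivity (irreducibility of $\C_2$, hence of the image curve) then makes $Y_2$ a single point.

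So both $g_1, g_2$ are isomorphisms. Their fiber product is isomorphic to $\D$ and is therefore irreducible, with one component. By Theorem \ref{thm: frieds thm}, the fiber product of $f_1, f_2$ would then have exactly one component, contradicting the assumption that it is reducible. Hence $\deg g_1 \geq 2$, and symmetrically $\deg g_2 \geq 2$.

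The only delicate point is the step \emph{equal Galois closure with one map of degree $1$ forces the other to have degree $1$}. This needs irreducibility (transitivity) to pass from ``$G$ acts trivially on $Y_2$'' to ``$Y_2$ is a point''. If reducible curves $\C_i$ were allowed, the statement should be read componentwise. In that case one would first reduce to irreducible $\C_i$ by selecting components $\C_1', \C_2'$ of $\C_1, \C_2$ whose fiber product is already reducible.
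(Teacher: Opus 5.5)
Your proof is correct and is essentially the paper's argument: apply Theorem \ref{thm: frieds thm}, then rule out $\deg g_i = 1$ because the fiber product of $g_1, g_2$ would be irreducible. The paper gets this in one step (a fiber product with an isomorphism $g_1$ is just the source of $g_2$, which is irreducible since it is dominated by $\C_2$), so your detour through the common Galois closure to force $\deg g_2 = 1$ is harmless but unnecessary.

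One caveat on your last paragraph: for reducible $\C_i$ there need not be components with reducible fiber product. Take $f_1 = \mathrm{id}_{\D}$ and $f_2 : \D \sqcup \D \to \D$ the identity on each copy. The corollary genuinely fails there, so irreducibility of the $\C_i$ is a real hypothesis (implicit in the paper as well), not something you can reduce to.
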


\begin{proof}
    Let $f_i = g_ih_i$ be the decompositions provided by Theorem \ref{thm: frieds thm}.
    Then $g_1, g_2$ have the same Galois closure.
    If at least one $g_i$ had degree 1, then the fiber product of $g_1, g_2$ would be irreducible.
    However, Theorem \ref{thm: frieds thm} implies that the fiber product of $g_1, g_2$ has the same number of components as the fiber product of $f_1, f_2$, which, by assumption, has at least two components.
    Thus $g_1, g_2$ must both have degree at least 2.
\end{proof}

We apply Corollary \ref{cor: fried} to prove the following lemma.

\begin{lemma}
\label{lemma: power map fiber product}
    Let $f$ be a rational function, let $d \geq 2$, and let $p, q \in \PP^1$ be distinct points.
    Let $g = \mu x^d \mu\inv$ where $\mu$ is a M\"obius transformation satisfying $\mu(0) = p$ and $\mu(\infty) = q$.
    \begin{enumerate}
        \item If $d \mid e_f(r)$ for all $r \in f\inv(\{p,q\})$, then there is a rational function $h$ such that $f = gh$.
        
        \item If there is not a factor $e > 1$ of $d$ such that $e \mid e_f(r)$ for all $r \in f\inv(\{p,q\})$, then the fiber product of $f$ and $g$ is irreducible.
    \end{enumerate}
\end{lemma}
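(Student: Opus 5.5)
Conjugating by $\mu$ lets us assume $p = 0$, $q = \infty$ and $g = x^d$: replace $f$ by $\mu^{-1} f$. This does not change ramification indices, it sends $f^{-1}(\{p,q\})$ to the preimage of $\{0,\infty\}$, and both the factorization and the fiber product statements transport along $\mu$.

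\emph{Part (1).} We show that $f$ is a $d$th power up to a constant. Write $f = a\prod_i (x-\alpha_i)^{m_i}\big/\prod_j (x-\beta_j)^{n_j}$, where the $\alpha_i$ are the finite zeros and the $\beta_j$ the finite poles. The multiplicities $m_i, n_j$ are the ramification indices of $f$ over $0$ and $\infty$, so $d$ divides each of them. The point $\infty \in \PP^1$ lies in $f^{-1}(\{0,\infty\})$ exactly when it is a zero or a pole of $f$, and its order there is $|\sum m_i - \sum n_j|$, which is again divisible by $d$; if $\infty$ is neither, this difference is $0$, so there is nothing to check. Choosing $c$ with $c^d = a$ (possible since $K$ is algebraically closed), we get
\[
h := c\prod_i (x-\alpha_i)^{m_i/d}\prod_j (x-\beta_j)^{-n_j/d},
\]
which satisfies $h^d = f$, i.e.\ $f = g h$. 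A more conceptual route is to observe that $x^d$ is Galois and that $e_f \ge e_g$ at every point over $\{0,\infty\}$ forces the fiber product to split into $d$ copies of the source; the explicit formula is quicker.

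\emph{Part (2).} Suppose, for contradiction, that the fiber product of $f$ and $x^d$ is reducible. Corollary~\ref{cor: fried} gives decompositions $f = g_1h_1$ and $x^d = g_2h_2$ in which $g_1, g_2$ have the same Galois closure and both have degree at least $2$. Every left factor of $x^d$ is, up to Möbius change on the right, $x^e$ with $e \mid d$; after absorbing a Möbius map into $h_2$ we may take $g_2 = x^e$ with $e > 1$. This classification is the one step needing an argument. It follows from the Galois correspondence, because the Galois group of $x^d$ is cyclic of order $d$, so its intermediate subgroups are cyclic, with quotients $x \mapsto x^{d/e}$.

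The map $x^e$ is Galois, so the Galois closure of $g_1$ is $x^e$ itself. By Lemma~\ref{lemma: galois ramification}, $g_1$ has critical values only $0$ and $\infty$, and every point over $0$ or $\infty$ satisfies $e_{g_1}(s) \mid e$. Since $x^e$ factors through $g_1$, $g_1$ is a quotient of $\PP^1$ by a subgroup of the cyclic group, so $g_1 = x^{e'}$ with $e' \mid e$ and $e' = \deg g_1 \ge 2$. Then $f = g_1h_1 = h_1^{e'}$, and for any $r \in f^{-1}(\{0,\infty\})$ we have $e_f(r) = e' \cdot e_{h_1}(r)$. Hence $e' > 1$ is a factor of $d$ dividing every $e_f(r)$, contradicting the hypothesis. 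Therefore the fiber product is irreducible.

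The main obstacle is the identification of the left factors: first of $x^d$, and then of $g_1$, once we know its Galois closure is a power map. Both reduce to the fact that subgroups of the cyclic deck group $\mu_e$ give power-map quotients. I would spell this out via the Galois correspondence for the extension $K(x)/K(x^e)$.
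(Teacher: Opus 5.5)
Your proof is correct. Part (2) follows the paper's route. Corollary~\ref{cor: fried} gives left factors $g_1$ of $f$ and $g_2$ of $x^d$, of degree at least $2$, with a common Galois closure. Left factors of a power map are, up to a M\"obius map on the right, power maps $x^e$ with $e\mid d$, hence Galois. So $f$ acquires a left factor $x^{e'}$ with $1<e'\mid d$, which forces $e'\mid e_f(r)$ for every $r$ over $\{0,\infty\}$. Your reason that $g_1$ is itself a power map (it is a left factor of its own Galois closure $x^e$) is more explicit than the paper's ``without loss of generality $f_1=g_1$.''

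Part (1) is where you genuinely differ. The paper takes a component $\C$ of the fiber product and uses Abhyankar's lemma to see that the projection $g'\colon\C\to\PP^1$ is unramified. Since $\PP^1$ has no nontrivial connected unramified covers, $g'$ is an isomorphism, and it sets $h=f'{g'}^{-1}$. You instead extract a $d$th root of $\mu\inv f$ directly from its divisor. These are the same fact in two languages. After your reduction, the fiber product is the normalization of $y^d=\mu\inv f(x)$. It is unramified over the $x$-line precisely because $d$ divides every zero and pole order, and on $\PP^1$ the degree-zero divisor $\frac{1}{d}\mathrm{div}(\mu\inv f)$ is principal.

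Your version is more elementary and produces $h$ explicitly, but it relies on $g$ being a conjugate of a power map. The paper's argument works verbatim for any rational $g$, under the hypothesis $e_g(s)\mid e_f(r)$ whenever $f(r)=g(s)$. It also stays within the fiber-product and ramification framework used throughout the paper.
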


\begin{proof}
    (1)
    Let $\C$ denote an irreducible component of the fiber product of $f$ and $g$.
    \[
    \begin{tikzcd}
    \mathbb{P}^1 \arrow[d, "g"'] & \mathcal{C} \arrow[l, "f'"'] \arrow[d, "g'"] \\
    \mathbb{P}^1                 & \mathbb{P}^1 \arrow[l, "f"]
    \end{tikzcd}
    \]
    Our assumption that $d \mid e_f(p)$ for all $p \in f\inv(\{p, q\})$, together with Abhyankar's lemma, implies that $g'$ is unramified.
    There are no unramified branched covers of $\PP^1$ with degree greater than one, hence $g' : \C \to \PP^1$ is an isomorphism.
    Thus setting $h = f'{g'}\inv$ gives us the desired decomposition.

    (2)
    We prove the contrapositive.
    Suppose that the fiber product of $f$ and $g$ is reducible.
    Then Corollary \ref{cor: fried} implies that $f = f_1f_2$ and $g = g_1g_2$ where $f_1$ and $g_1$ have the same Galois closure and both have degree bigger than one.
    Every left factor of $g$ is Galois and totally ramified over $p, q$ with ramification $e$ for some $e \mid d$.
    Hence we may assume without loss of generality that $f_1 = g_1$.
    Thus $e \mid e_{f_1}(r)$ for all $r \in f_1\inv(\{p, q\})$, and it follows that $e \mid e_f(r)$ for all $r \in f\inv(\{p, q\})$.
\end{proof}

\subsection{Riemann-Hurwitz}

If $\C$ is an irreducible curve, then the Euler characteristic of $\C$ is defined as $\chi(\C) = 2 - 2\gen(\C)$.
If $\C$ is reducible, say $\C = \C_1 \sqcup \ldots \sqcup \C_n$, then $\chi(\C) := \chi(\C_1) + \ldots + \chi(\C_n)$.
We make frequent use of the well-known Riemann-Hurwitz formula.

\begin{lemma}[Riemann-Hurwitz]
\label{lemma: R-H}
    Let $f : \C \to \D$ be a map between curves.
    Then
    \[
        \chi(\C) = \deg f \chi(\D) - \sum_{q \in \C} e_f(q) - 1.
    \]
\end{lemma}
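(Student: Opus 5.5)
The Riemann--Hurwitz formula is presumably stated with the sum $\sum_{q\in\C}(e_f(q)-1)$. I will prove it by reducing to the irreducible case and then computing the degree of the canonical divisor (equivalently, a topological Euler characteristic count).

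\emph{Reduction to irreducible curves.} Write $\C = \C_1 \sqcup \cdots \sqcup \C_n$ and $\D = \D_1 \sqcup \cdots \sqcup \D_m$ as disjoint unions of irreducible curves. Since $f$ is finite and surjective onto each component it hits, each $\C_i$ maps finitely onto a single $\D_{j(i)}$ with degree $d_i$. For the global formula to make sense with $\deg f$, we use the convention that $\deg f$ is constant over $\D$, i.e.\ $\sum_{i : j(i)=j} d_i = \deg f$ for every $j$. Then $\chi(\C)=\sum_i \chi(\C_i)$ and the ramification sum splits over the $\C_i$. It therefore suffices to prove
\[
\chi(\C_i) = d_i\,\chi(\D_{j(i)}) - \sum_{q \in \C_i} (e_f(q)-1),
\]
and sum over $i$. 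Grouping the terms by $j$ gives $\sum_j \deg f\cdot\chi(\D_j) = \deg f\cdot\chi(\D)$.

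\emph{Irreducible case.} Let $f:\C\to\D$ be a finite map of degree $d$ between irreducible smooth projective curves over $\CC$. I would argue topologically, which is allowed by the Lefschetz principle fixed at the start of this section. Let $B\subset\D$ be the finite set of branch points. Triangulate $\D$ so that $B$ is contained in the vertex set. Over $\D\setminus B$ the map $f$ is a $d$-sheeted covering, so every edge and face lifts to exactly $d$ edges and faces of $\C$. A vertex $v$ has $|f\inv(v)| = d - \sum_{q\in f\inv(v)}(e_f(q)-1)$ preimages, by the local normal form $z\mapsto z^{e}$. Counting $V-E+F$ on both sides gives the formula.

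Alternatively, one can argue purely algebraically. Pull back a nonzero meromorphic differential $\omega$ on $\D$. Locally $f$ is $z\mapsto z^e$, so $\mathrm{ord}_q(f^*\omega) = e_f(q)\,\mathrm{ord}_{f(q)}(\omega) + e_f(q)-1$; this uses characteristic zero, so there is no wild ramification. Summing over $q$ gives $2g_\C-2 = d(2g_\D-2)+\sum_q(e_f(q)-1)$, which is the stated identity after multiplying by $-1$. This is the approach of Stichtenoth \cite{Stich}.

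The only delicate point is the bookkeeping in the reducible case. The statement implicitly requires $\deg f$ to be well defined, that is, the fibre cardinality counted with multiplicity must be constant across the components of $\D$. I would make this explicit as a hypothesis, since it holds in every application in the paper (fiber products over an irreducible base). The irreducible case itself is standard, and the local computation $z\mapsto z^e$ is the key step.
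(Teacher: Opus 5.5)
Your proposal is correct, and you are right to read the sum as $\sum_{q\in\C}(e_f(q)-1)$. The paper gives no argument of its own. It cites Hartshorne's Cor.~IV.2.4, whose proof is your ``alternative'' algebraic route: establish $K_\C \sim f^*K_\D + R$ with $R=\sum_{q\in\C}(e_f(q)-1)\,q$ (tame, since the characteristic is zero), then take degrees. Your primary triangulation argument is a genuinely different, topological proof. It needs the Lefschetz principle to move to $\CC$, and the standard identification of $2-2g$ with the topological Euler characteristic of the associated compact Riemann surface. The differential computation avoids both and works directly over any algebraically closed field of characteristic zero; with the different in place of $e_f(q)-1$, it also adapts to positive characteristic. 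Your reduction to irreducible components, together with the explicit hypothesis that $\deg f$ is the same over every component of $\D$, fills in bookkeeping the citation leaves implicit, since Hartshorne's curves are integral. As you note, this is harmless here, because every application of the formula in the paper has an irreducible target.
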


\begin{proof}
    See \cite[Cor. 2.4]{hartshorne2013algebraic}.
\end{proof}

The following lemma records several useful consequences of Riemann-Hurwitz.

\begin{lemma}
\label{lemma: RH cor}
    Let $\C, \D$ be irreducible curves.
    \begin{enumerate}
        \item If there exists a  map $f : \C \rightarrow \D$, then $\gen(\C) \geq \gen(\D)$.
        
        \item If there exists an endomorphism $f: \D \rightarrow \D$ with $\deg f \geq 2$, then $\gen(\D) \leq 1$.
        
        \item If $\gen(\C) = \gen(\D) = 1$, then every map $f: \C \rightarrow \D$ is unramified and Galois.
    \end{enumerate}
\end{lemma}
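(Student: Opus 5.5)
All three parts follow from the Riemann--Hurwitz formula (Lemma \ref{lemma: R-H}), written in the form
\[
    2\gen(\C) - 2 = \deg f\,(2\gen(\D) - 2) + \sum_{q \in \C} (e_f(q) - 1),
\]
where the ramification sum $R_f := \sum_{q}(e_f(q)-1)$ is nonnegative and vanishes exactly when $f$ is unramified. We treat the parts in order.

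For (1), suppose first $\gen(\D) \geq 1$. Then $2\gen(\D)-2 \geq 0$, and since $\deg f \geq 1$ and $R_f \geq 0$ the formula gives $2\gen(\C)-2 \geq 2\gen(\D)-2$. If instead $\gen(\D) = 0$, the inequality $\gen(\C)\ge 0 = \gen(\D)$ is automatic.

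For (2), suppose $f : \D \to \D$ has $d = \deg f \geq 2$, and write $g = \gen(\D)$. Riemann--Hurwitz gives $(2g-2) = d(2g-2) + R_f$, that is, $(1-d)(2g-2) = R_f \geq 0$. Since $1 - d < 0$, this forces $2g - 2 \leq 0$, hence $g \leq 1$.

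For (3), with $\gen(\C) = \gen(\D) = 1$ both Euler characteristics vanish, so the formula reads $0 = \deg f \cdot 0 + R_f$ and $f$ is unramified. The Galois claim is the only step that needs more than Riemann--Hurwitz, and we argue as follows. Choose a base point $O \in \C$ and give $\D$ the origin $f(O)$. By the rigidity of maps between elliptic curves, $f$ is then a homomorphism of elliptic curves. Its kernel $G = \ker f$ is a finite subgroup of order $\deg f$, because $f$ is unramified and hence separable with $|f^{-1}(f(O))| = \deg f$. Translations by elements of $G$ are deck transformations of $f$, so $|\deck(f)| \geq \deg f$. Since $\C$ and $\D$ are irreducible, $\deck(f)$ acts freely on the generic fiber, which gives $|\deck(f)| \le \deg f$, so $|\deck(f)| = \deg f$. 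It follows that $\C/G \to \D$ has degree one and is therefore an isomorphism, so $f$ is Galois with group $G$.

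Alternatively, one can avoid the group law. The map $f$ is unramified, so it is a topological covering, and it corresponds to a subgroup of $\pi_1(\D) \cong \ZZ^2$. This group is abelian, so every subgroup is normal, and hence the covering is Galois.
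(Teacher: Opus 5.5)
Your proposal is correct and follows the paper's proof: all three parts come straight from Riemann--Hurwitz, and for the Galois claim in (3) you choose base points so that $f$ becomes an isogeny whose Galois group is $\ker f$. The only difference is that you prove directly the fact the paper cites from Silverman (that $|\ker f| = \deg f$, so $\C/\ker f \to \D$ has degree one), and your alternative argument via $\pi_1(\D) \cong \ZZ^2$ being abelian is also valid.
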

\begin{proof}
    (1) The Riemann-Hurwitz formula implies that
    \[
        \gen(\C) - 1 = \deg f(\gen(\D)-1) + \frac{1}{2}\sum_{q \in \C}e_f(q) - 1.
    \]
    Since genus is a non-negative integer, it follows that $\gen(\C) \geq \gen(\D)$.
    
    (2) The Riemann-Hurwitz formula gives us
    \[
        (\deg f - 1)\chi(\D) = \sum_{q\in \D} e_f(q) - 1.
    \]
    Since the right hand side is non-negative and $\deg f - 1 > 0$ it follows that $\chi(\D)\geq 0$, or equivalently that $\gen(\D) \leq 1$.
    
    (3) If $\gen(\C) = \gen(\D) = 1$, then $\chi(\C) = \chi(\D) = 0$ and Riemann-Hurwitz implies that $e_f(q) = 1$ for all $q \in \C$.
    Let $q \in \C$ be a point and let $p := f(q)$.
    Using $p$ and $q$ as our base points, $\C$ and $\D$ are elliptic curves and $f$ is an isogeny.
    Every isogeny is Galois with Galois group $\ker f$; see Silverman \cite[Thm. 4.10 (c)]{Silverman_ellcurves}.
\end{proof}

\subsection{Low-genus fiber products}

Let $g$ be a rational function with degree $e \geq 2$.
Let $B_{g,d}$ for $d \geq 2$ denote the set of all maps $f : \C \to \PP^1$ of degree $d$ such that the fiber product of $f$ and $g$ is irreducible with genus at most one.
Given $p \in \PP^1$, let
\[
    m_{g,d}(p) := \sup_{f\in B_{g,d}}m_f(p) = \sup_{f\in B_{g,d}} \lcm_{q \in f\inv(p)}e_f(p).
\]
Let $V_{g,d} := \bigcup_{f\in B_{g,d}}V_f$ where $V_f$ is the set of critical values of $f$.
The following result appears in Chapter 5 of the author's Ph.D. thesis, which was joint work with Michael Zieve.
We include a proof for the reader's convenience.

\begin{theorem}[{\cite[Thm. 5.3.6]{thesis}}]
\label{thm: bdd genus}
    If $2 \leq d < e/2$, then
    \[
        \sum_{p\in V_{g,d}} 1 - \frac{1}{ m_{g,d}(p)} \leq \frac{2e - 2}{e - 2d}.
    \]
\end{theorem}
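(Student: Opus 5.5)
The plan is to turn the genus hypothesis into a Riemann--Hurwitz bound for the degree-$d$ side of the fiber product. I would read the ramification of that side off Abhyankar's lemma (Lemma \ref{lemma: abhyankar}) and then sum over the critical values of $g$.

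\textbf{Local formula.} Fix $f\in B_{g,d}$, $f:\C\to\PP^1$, and let $\B$ be the irreducible fiber product with $f g_2 = g f_2$. Here $f_2:\B\to\PP^1$ has degree $d$ onto the source of $g$, and $\gen(\B)\le 1$. Riemann--Hurwitz (Lemma \ref{lemma: R-H}) for $f_2$ gives $\sum_{b\in\B}(e_{f_2}(b)-1)=2d-\chi(\B)\le 2d$. Take $p\in\PP^1$, $r\in g\inv(p)$ and $q\in f\inv(p)$. By Abhyankar's lemma there are $\gcd(e_f(q),e_g(r))$ points of $\B$ over $(q,r)$, each with $e_{f_2}=\lcm(e_f(q),e_g(r))/e_g(r)$. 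So the ramification of $f_2$ above $r$ is
\[
\rho(r):=\sum_{q\in f\inv(p)}\bigl(e_f(q)-\gcd(e_f(q),e_g(r))\bigr),
\]
and we have $\sum_p\sum_{r\in g\inv(p)}\rho(r)\le 2d$.

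\textbf{Step 1: $V_{g,d}$ lies in the critical values of $g$.} Suppose $p$ is not a critical value of $g$. Then all $e$ points $r\in g\inv(p)$ have $e_g(r)=1$, so each has $\rho(r)=\sum_q(e_f(q)-1)\ge m_f(p)-1$. This gives $e(m_f(p)-1)\le 2d<e$, hence $m_f(p)=1$. So $f$ is unramified over $p$, and $V_{g,d}\subseteq V_g$. In particular the sum in the statement is finite, and for each $p$ we may choose a single $f\in B_{g,d}$ realizing $m:=m_{g,d}(p)$. Such an $f$ exists because $m_f(p)\le d$, so the supremum is attained.

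\textbf{Step 2: one fiber of $g$.} Fix $p\in V_g$ with $m=m_{g,d}(p)\ge 2$, and fix $f$ and $q_0\in f\inv(p)$ with $e_f(q_0)=m$. Split $g\inv(p)$ into the points $r$ with $m\mid e_g(r)$ and the rest.
\begin{itemize}
\item For $r$ with $m\nmid e_g(r)$, the $q_0$ term alone gives $\rho(r)\ge m-\gcd(m,e_g(r))\ge m/2$. So there are at most $4d/m$ such points.
\item Each $r$ with $m\mid e_g(r)$ contributes $e_g(r)-1\ge e_g(r)(1-1/m)$ to the local ramification $R_g(p):=e-|g\inv(p)|$.
\end{itemize}
Writing $s_p$ for the total multiplicity $\sum e_g(r)$ over the non-divisible points, this gives $R_g(p)\ge (e-s_p)(1-1/m)$. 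Since each $r$ satisfies $e_g(r)\le 2(e_g(r)-1)+\mathbf 1_{e_g(r)=1}$, we also get $s_p\le 2R_g(p)+\#\{r\in g\inv(p): e_g(r)=1\}$. The unramified points are among the non-divisible ones, and each carries $\rho(r)\ge m-1$. So their number is at most $2d/(m-1)$, and more precisely their contribution to $\sum\rho$ is recorded against the budget $2d$.

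\textbf{Step 3: summation.} Combine $e(1-1/m)\le R_g(p)+s_p(1-1/m)$ with Riemann--Hurwitz for $g$, namely $\sum_p R_g(p)=2e-2$. We would aim for an inequality of the shape
\[
e\bigl(1-\tfrac1{m_{g,d}(p)}\bigr)\le R_g(p)+2d\bigl(1-\tfrac1{m_{g,d}(p)}\bigr)\cdot\theta_p,
\qquad \sum_p\theta_p\le 1 .
\]
Summing then gives $(e-2d)\sum_p(1-1/m_{g,d}(p))\le 2e-2$, which is the claim because $e>2d$.

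\textbf{Main obstacle.} The hard step is producing the weights $\theta_p$ with $\sum_p\theta_p\le1$. The maximizing maps $f$ differ from point to point, so the single budget $\sum\rho\le 2d$ for one $f$ does not directly control several $p$ at once. What I would try first is to charge only the unramified points of $g$ against $\rho$. For those points the local picture is exactly that of $f$ over $p$, so $\rho(r)=d-|f\inv(p)|$. I would then use the bound $d-|f\inv(p)|\ge d(1-1/m_f(p))\cdot\tfrac{1}{d}\cdot\ldots$ carefully, together with the fact that the number of unramified points in each fiber is at least $e-2R_g(p)$, so that the deficit over each $p$ is absorbed into the term $R_g(p)$. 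If the cross-$f$ issue persists, the fallback is to prove the bound with $m_f$ in place of $m_{g,d}$ for a single $f$, and then argue that the maxima for different $p$ can be realized simultaneously. I expect the constant bookkeeping in this step, rather than the local Abhyankar computation, to be the delicate part.
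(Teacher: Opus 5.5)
Your Abhyankar computation of $\rho(r)$ and your Step 1 are correct, but the proof is not finished, and the obstacle you single out in Step 3 comes from your own bookkeeping rather than from the problem. Summing your target inequality does not require $\sum_p\theta_p\le 1$. It only requires $\theta_p\le 1$ for each $p$: then $\sum_p 2d(1-1/m_{g,d}(p))\theta_p\le 2d\sum_p(1-1/m_{g,d}(p))$, and this term moves to the left side as the coefficient $e-2d$. So you may take $\theta_p=1$. What you need is then only the per-fiber inequality
\[
e-|g^{-1}(p)|\ \ge\ (e-2d)\Bigl(1-\frac{1}{m_{g,d}(p)}\Bigr),
\]
which involves a single $f$ at a time, namely one realizing $m_{g,d}(p)$. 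The full budget $2d$ may be spent afresh in every fiber; that is exactly what the $-2d$ in $e-2d$ pays for. This is how the paper argues. The ``cross-$f$'' problem never arises, and your fallback about realizing the maxima simultaneously is unnecessary.

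Your Step 2 does not reach this per-fiber inequality because you discarded the contribution $e_g(r)-1$ to $R_g(p)$ of the points $r$ with $m\nmid e_g(r)$. That leaves the term $s_p(1-1/m)$, and $s_p$ is not bounded by $2d$, since such points can have large ramification index. Keep that contribution. If $N_p$ is the number of such points, then, using $s_p\ge N_p$,
\[
R_g(p)\ \ge\ (e-s_p)(1-1/m)+(s_p-N_p)\ \ge\ (e-N_p)(1-1/m).
\]
Equivalently $|g^{-1}(p)|\le N_p+(e-N_p)/m$, and $N_p\le 2d$ finishes the argument.

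Your bound $N_p\le 4d/m$ is also unjustified. It uses a point $q_0$ with $e_f(q_0)=m$, which need not exist because $m_f(p)$ is an lcm. For example, a degree $5$ map with ramification $(2,3)$ over $p$ has $m_f(p)=6$ but no point of index $6$. This also refutes your claim $m_f(p)\le d$, although the supremum is still attained, since it ranges over lcms of partitions of $d$. What is true, and suffices, is this: if $m\nmid e_g(r)$, then some $e_f(q)\nmid e_g(r)$, so $\rho(r)\ge 1$ and hence $N_p\le\sum_r\rho(r)\le 2d$. This is the paper's observation that such $r$ are critical values of the degree-$d$ projection from the fiber product.
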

\begin{proof}
    The Riemann-Hurwitz formula implies that
    \begin{equation}
    \label{eqn: bound}
        2e - 2 = \sum_{q\in \PP^1}e_{g}(q) - 1 \geq \sum_{p\in V_{g,d}}\sum_{q \in g\inv(p)}e_{g}(q) - 1 = \sum_{p\in V_{g,d}}e - |g\inv(p)|.
    \end{equation}
    Suppose that $p \in V_{g,d}$, $q \in g\inv(p)$, and $f : \C \to \PP^1$ is an element of $B_{g,d}$.
    Let 
    \[
        \begin{tikzcd}
        \mathcal{C} \arrow[d, "f"'] & \mathcal{C}' \arrow[l] \arrow[d, "f'"] \\
        \mathbb{P}^1                & \mathbb{P}^1 \arrow[l, "g"]           
        \end{tikzcd}
    \]
    be the fiber product diagram of $f$ and $g$.
    Note that $f \in B_{g,d}$ implies that $\C'$ is irreducible with $\gen(\C') \leq 1$.
    If $e_g(q)$ is not divisible by $m_f(p)$, then Abhyankar's lemma implies that $q$ is a critical value of $f'$.
    Riemann-Hurwitz bounds the number of critical values of $f'$ by
    \[
        |V_{f'}| \leq \sum_{q\in \C'}e_{f'}(q) - 1 =  2d+ 2(\gen(\C') - 1) \leq 2d.
    \]
    Hence $m_f(p)$ divides $e_g(q)$ for all but at most $2d$ points $q \in g\inv(p)$. 
    Thus
    \[
        |g\inv(p)| \leq 2d + \frac{e - 2d}{m_f(p)}
    \]
    for all $f \in B_{g,d}$, and therefore
    \[
        |g\inv(p)| \leq 2d + \frac{e - 2d}{{m}_{g,d}(p)}.
    \]
    Hence 
    \[
        e - |g\inv(p)| \geq (e - 2d)\Big(1 - \frac{1}{{m}_{g,d}(p)}\Big).
    \]
    Thus \eqref{eqn: bound} and our assumption that $e - 2d > 0$ implies that
    \[
        2e - 2 \geq \sum_{p\in V_{g,d}}(e - 2d)\Big(1 - \frac{1}{{m}_{g,d}(p)}\Big) \Longrightarrow \sum_{p\in V_{g,d}}1 - \frac{1}{{m}_{g,d}(p)} \leq \frac{2e - 2}{e - 2d}.\qedhere
    \]
\end{proof}

Theorem \ref{thm: bdd genus} shows that $f$ having an irreducible fiber product with $g$ of low genus places significant ramification constraints on $f$.
We say a map $f : \C \to \PP^1$ from an irreducible curve $\C$ is \textbf{low-genus} if the Galois closure $\bar f : \overline\C \to \PP^1$ has $\gen(\overline\C) \leq 1$.

\begin{corollary}
\label{corollary: low genus criteria}
    Let $d \geq 2$ and suppose $g$ is a rational function with degree $e$ such that 
    \[
        e > 170d - 84,
    \]
    then
    \begin{enumerate}
        \item $\sum_{p \in V_{g,d}} 1 - \frac{1}{{m}_{g,d}(p)} \leq 2$,
        \item $V_{g,d}$ has at most 4 elements,
        \item Every $f \in B_{g,d}$ has low-genus.
    \end{enumerate}
\end{corollary}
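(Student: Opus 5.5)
The plan is to feed the numerical hypothesis $e > 170d - 84$ into Theorem \ref{thm: bdd genus} and then use the classical gap in the values of $\sum (1 - 1/m_i)$ just above $2$.

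\emph{Step 1: the bound from Theorem \ref{thm: bdd genus}.} Since $d \geq 2$, we have $170d - 84 > 2d$, so $e > 2d$ and hence $d < e/2$. Theorem \ref{thm: bdd genus} therefore applies. Rewrite its bound as
\[
    \frac{2e-2}{e-2d} = 2 + \frac{4d-2}{e-2d}.
\]
The hypothesis gives $e - 2d > 168d - 84 = 42(4d-2)$. Since $4d - 2 > 0$, this yields
\[
    S := \sum_{p \in V_{g,d}} \Big(1 - \frac{1}{m_{g,d}(p)}\Big) < 2 + \frac{1}{42}.
\]
Each $m_{g,d}(p)$ is an integer between $2$ and $d!$ for $p \in V_{g,d}$, because a critical value of some $f \in B_{g,d}$ has $m_f(p) \geq 2$ and $m_f(p) \leq d!$. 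So each term of $S$ lies in $[1/2, 1)$, and in particular $V_{g,d}$ is finite.

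\emph{Step 2: the gap lemma, which is the main point.} I will show that if integers $m_1, \dots, m_r \geq 2$ satisfy $\sum_i (1 - 1/m_i) > 2$, then this sum is at least $2 + 1/42$. I split into cases on $r$.
\begin{enumerate}
    \item If $r \leq 2$, the sum is less than $2$, so there is nothing to check.
    \item If $r \geq 5$, the sum is at least $5/2$.
    \item If $r = 4$, the sum equals $4 - \sum 1/m_i$. It exceeds $2$ unless all $m_i = 2$, and otherwise it is at least $4 - (3/2 + 1/3) = 2 + 1/6$.
    \item If $r = 3$, we need the classical fact that $1/a + 1/b + 1/c < 1$ implies $1/a + 1/b + 1/c \leq 41/42$. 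I prove this by a short case analysis on the smallest entry. If $a \geq 4$, then since the triple $(4,4,4)$ gives exactly $1$, the largest allowed value is at most $1/4 + 1/4 + 1/5$. If $a = 3$, the extremal triple is $(3,3,4)$. If $a = 2$, the extremal triples are among $(2,3,7)$, $(2,4,5)$ and $(2,5,5)$. Comparing these, the maximum is $41/42$, attained at $(2,3,7)$.
\end{enumerate}
Combined with Step 1, this forces $S \leq 2$, which is part (1).

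\emph{Step 3: parts (2) and (3).} For (2), each term of $S$ is at least $1/2$, so $S \leq 2$ gives $|V_{g,d}| \leq 4$.

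For (3), fix $f \in B_{g,d}$ and let $\bar f : \overline{\C} \to \PP^1$ be its Galois closure, of degree $N$. By Lemma \ref{lemma: galois ramification}, $\bar f$ has critical values exactly $V_f$, and every point above $p$ has ramification index $m_f(p)$. Riemann–Hurwitz (Lemma \ref{lemma: R-H}) then gives
\[
    \chi(\overline{\C}) = N\Big(2 - \sum_{p \in V_f}\Big(1 - \frac{1}{m_f(p)}\Big)\Big).
\]
Since $V_f \subseteq V_{g,d}$ and $m_f(p) \leq m_{g,d}(p)$, the sum on the right is at most $S \leq 2$. Hence $\chi(\overline{\C}) \geq 0$, so $\gen(\overline{\C}) \leq 1$ and $f$ is low-genus.

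The only step that needs genuine care is the $r = 3$ case of the gap lemma; everything else is arithmetic.
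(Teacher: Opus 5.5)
Your proof is correct and follows the paper's argument essentially step for step. The paper likewise rewrites the hypothesis as $\frac{2e-2}{e-2d} < 2 + \frac{1}{42}$, applies Theorem \ref{thm: bdd genus}, uses the $1/42$ gap to get (1), and applies Riemann--Hurwitz to the Galois closure for (3). The only differences are additions on your side: you prove the gap lemma by hand where the paper cites Miranda, and you explicitly check $d < e/2$ and that each $m_{g,d}(p)$ is a finite integer, which the paper leaves implicit.
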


\begin{proof}
    (1)
    The inequality $e > 170d - 84$ is equivalent to
    \[
        \frac{2e - 2}{e - 2d} < 2 + \frac{1}{42}.
    \]
    Thus Theorem \ref{thm: bdd genus} implies
    \[
        \sum_{p\in V_{g,d}}1 - \frac{1}{{m}_{g,d}(p)} < 2 + \frac{1}{42}.
    \]
    This inequality with positive integers ${m}_{g,d}(p)$ implies that
    \[
        \sum_{p\in V_{g,d}}1 - \frac{1}{{m}_{g,d}(p)} \leq 2
    \]
    by a well-known calculation (see, for example, Miranda \cite[Lem. 3.8 (c)]{Miranda}).
    
    (2) 
    Since ${m}_{g,d}(p) \geq 2$ for each $p \in V_{g,d}$, we have $1 - \frac{1}{{m}_{g,d}(p)} \geq \frac{1}{2}$. 
    Hence (1) implies $|V_{g,d}| \leq 4$.
    
    (3) 
    If $f \in B_{g,d}$ and $\bar f: \overline  \C \rightarrow \PP^1$ is the Galois closure of $f : \C \rightarrow \PP^1$, then Lemma \ref{lemma: galois ramification} implies that $m_f(p)$ is the common ramification index of $\bar f$ over each point $q \in \bar f\inv(p)$ for each $p \in \PP^1$.
    Therefore, Riemann-Hurwitz applied to $\bar f$ gives us
    \begin{align*}
        2(\gen(\overline \C) - 1) &= -2\deg \bar f + \sum_{q\in \overline \C} e_{\bar f}(q) - 1\\
        &= \deg \bar f\Big({-2} + \sum_{p \in \PP^1}1 - \frac{1}{m_f(p)}\Big)\\
        &\leq \deg\bar f\Big({-2} + \sum_{p \in V_{g,d}}1 - \frac{1}{m_{g,d}(p)}\Big)\\
        &\leq 0.
    \end{align*}
    Hence $\gen(\overline \C) \leq 1$, implying that $f$ is low-genus.
\end{proof}

\section{Low-genus maps}
\label{sec: low-genus}

For this section, let $f : \C \to \D$ be a map between irreducible curves.
The \textbf{ramification type} of $f$ is the unordered list of partitions recording the ramification over each critical value of $f$.
For example, the ramification type $((1^12^2), (1^12^2), (5^1))$ corresponds to a degree 5 map with three critical values: two which have a single unramified pre-image and two pre-images with ramification index 2, and one critical value with a totally ramified pre-image.
The \textbf{signature} of $f$ is the unordered list $(m_f(p_1), m_f(p_2),\ldots, m_f(p_n))$ where the $p_i$ are the critical values of $f$.
Note that the signature of $f$ does not include the critical values of $f$.
Lemma \ref{lemma: galois ramification} implies that $f$ and its Galois closure $\bar f : \overline{\C} \to \D$ have the same signature.
The main goal of this section is to provide a census of all ramification types of low-genus maps.

Riemann-Hurwitz simplifies for Galois maps with Galois group $G$ to
\[
    \chi(\C) = |G|\chi(\D) - \sum_{q \in \overline\C}e_{\bar f}(q) - 1
= |G|\chi(\D) - |G|\sum_{p \in \D} \Big(1 - \frac{1}{m_f(p)}\Big),
\]
or equivalently
\begin{equation}
\label{eqn: R-H galois}
     \chi(\D) - \frac{\chi(\overline\C)}{|G|} = \sum_{p\in \D} \Big(1 - \frac{1}{m_f(p)}\Big).
\end{equation}
If $f$ has low-genus, then $\C$, $\overline\C$, and $\D$ all have genus at most one by Lemma \ref{lemma: RH cor}.
If $\D$ has genus one, then Lemma \ref{lemma: RH cor} implies that $f = \bar f$, that $\C = \overline\C$ also has genus one, and that $f$ is unramified.
Thus in all other cases, $\D \cong \PP^1$ has genus zero and $\chi(\D) = 2$.
In the remainder of this section we classify all the low-genus maps based on the genus of $\C$ and $\overline\C$.

\subsection{Galois closure genus zero}
If $\overline\C$ has genus zero, then \eqref{eqn: R-H galois} becomes
\[
    2\Big(1 - \frac{1}{|G|}\Big) = \sum_{p\in \PP^1}  \Big(1 - \frac{1}{m_f(p)}\Big).
\]
The numerical solutions to this equation in positive integers fall into two infinite families and three exceptional cases.
Each solution is realized by a unique conjugacy class of finite subgroups of $\aut(\PP^1)$ (i.e. groups of M\"obius transformations).
These signatures and the associated groups are listed in the table below.

\begin{center}
    \begin{tabular}{|c|c|c|}
    \hline
        $G$ & order & signature \\
    \hline
        $C_n$ & $n$ & $(n,n)$\\
        $D_n$ & $2n$ & $(2,2,n)$\\
        $A_4$ & 12 & $(2,3,3)$\\
        $S_4$ & 24 & $(2,3,4)$\\
        $A_5$ & 60 & $(2,3,5)$\\
    \hline
    \end{tabular}
\end{center}

We refer to the $(n,n)$ signatures as \textbf{cyclic}, the $(2,2,n)$ signatures as \textbf{dihedral}, and the other three as \textbf{Platonic}, owing to their connection with the rotational symmetries of Platonic solids.
Collectively we refer to these as \textbf{genus zero signatures}.
We say two maps $f_1 : \C_1 \to \D_1$ and $f_2 : \C_2 \to \D_2$ are \textbf{essentially equivalent} if there are isomorphisms $\mu_1 : \D_1 \to \D_2$ and $\mu_2 : \C_1 \to \C_2$ such that $f_2 = \mu_1 f_1 \mu_2\inv$.
The uniqueness up to conjugacy of the finite groups acting on $\PP^1$ implies that rational functions with a genus zero Galois closure are determined up to essential equivalence by their ramification types.

\begin{lemma}
\label{lemma: gen 0 exceptional rigidity}
    If $f$ and $g$ are rational functions with the same ramification type and genus zero signatures, then $f$ and $g$ are essentially equivalent.
\end{lemma}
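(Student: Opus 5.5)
Since $f$ and $g$ have genus zero signatures, their Galois closures $\bar f, \bar g : \PP^1 \to \PP^1$ are Galois with groups $G_f, G_g \subseteq \aut(\PP^1)$. The plan is to compare $f$ and $g$ through these Galois closures. We show that the closures are essentially equivalent, compatibly with the ramification data, and then identify $f$ and $g$ as quotients of a common Galois cover by conjugate subgroups.

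\emph{Step 1: the closures have the same signature, hence conjugate groups.} The ramification type determines the signature, since $m_f(p)$ is the lcm of the parts of the partition over $p$. By Lemma \ref{lemma: galois ramification}, $f$, $g$, $\bar f$ and $\bar g$ therefore all share one signature. The table shows that a genus zero signature determines the order $|G|$ and the isomorphism type of $G$. It also determines $G$ up to conjugacy in $\aut(\PP^1)$, by the classical uniqueness of finite Möbius groups. After replacing $\bar g$ by $\bar g\circ\nu$ for a Möbius $\nu$, we may assume $G_f = G_g = G$. Each Galois cover $\PP^1\to\PP^1$ with group $G$ is then the quotient map $\PP^1 \to \PP^1/G$ followed by an isomorphism $\PP^1/G\cong\PP^1$. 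Consequently $\bar g = \mu\circ\bar f$ for a Möbius $\mu$. Composing $g$ on the left with $\mu^{-1}$, which preserves essential equivalence, we reduce to $\bar f = \bar g =: \pi$.

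\emph{Step 2: match the branch points.} The critical values of $\pi$ are at most three points carrying the signature values. Two things need care. First, which critical value of $f$ carries which partition must agree with $g$. Second, in the cyclic case $(n,n)$ and the dihedral case $(2,2,n)$, repeated signature entries allow permutations. I would handle this using the normalizer of $G$ in $\aut(\PP^1)$, whose elements descend to Möbius maps of $\PP^1/G$. The claim to check is that these realize every permutation of the branch points preserving the signature. For example, $z\mapsto 1/z$ swaps $0$ and $\infty$ for $C_n$. For $D_n$ with $n\ne 2$ one uses the normalizer $D_{2n}$ to swap the two order-$2$ branch points; for $D_2$ one uses $S_4$. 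The Platonic cases have distinct entries, so no permutation is needed. After such an adjustment, the ramification partitions of $f$ and of $g$ sit over the same branch points.

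\emph{Step 3: compare the subgroups.} Now $f$ and $g$ correspond to subgroups $H_f, H_g\le G$ of index $d$ with $f = \pi_{H_f}$, up to isomorphism of the source. The ramification of $G/H$ over a branch point $p$ is given by the cycle type of a generator $c_p$ of a point stabilizer acting on $G/H$. So the hypothesis says the permutation characters of $G$ on $G/H_f$ and $G/H_g$ agree on the elements $c_p$. This is the step I expect to be the main obstacle: proving $H_f$ and $H_g$ are conjugate in $G$, or in the normalizer used in Step 2. Cycle types on a few elements do not determine a subgroup in general. Here, though, the groups are small and explicit, so I would argue case by case. For $C_n$, subgroups are determined by their index. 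For $D_n$, subgroups are cyclic subgroups $C_k$ or dihedral subgroups $D_k$, and they are distinguished by the ramification over the order-$n$ point versus the order-$2$ points. For $A_4$, $S_4$ and $A_5$, one checks the finitely many conjugacy classes of subgroups directly. A conjugating element $\gamma\in G$, or in its normalizer, gives a Möbius map $\PP^1/H_g\to\PP^1/H_f$ over $\PP^1/G$, which yields $f = \mu_1 g\mu_2^{-1}$.
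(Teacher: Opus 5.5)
Your proposal is correct and has the same skeleton as the paper's proof. Both arguments put the two Galois closures on a common Galois cover $\pi$ with group $G$, regard $f$ and $g$ as the induced maps $\PP^1/H_f \to \PP^1/G$ and $\PP^1/H_g \to \PP^1/G$, and show $H_f$ and $H_g$ are conjugate, using M\"obius maps that normalize $G$ to absorb the dihedral ambiguity.

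The difference is the invariant used to pin down the conjugacy class of $H$. The paper extracts only the isomorphism type of $H_f$, from the ramification of $f'$ in $\bar f = ff'$. It then asserts that $C_n$, $A_4$, $S_4$, $A_5$ and $D_n$ with $n$ odd have a unique conjugacy class of subgroups of each order. You instead align the branch points and compare the cycle types of the local monodromy generators $c_p$ acting on $G/H$.

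Your finer invariant is genuinely needed for $S_4$, where the paper's uniqueness claim fails. The core-free subgroups $\langle (12)\rangle$ and $\langle (12)(34)\rangle$ have the same order and isomorphism type but are not conjugate. No M\"obius map normalizing $S_4$ relates them either, since that normalizer is $S_4$ itself and every automorphism of $S_4$ is inner. They give the two different degree-$12$ rows $((1^22^5),(3^4),(4^3))$ and $((2^6),(3^4),(2^24^2))$ of Table~\ref{table: gen 0 census}. Only the ramification data, not the order of $H$, separates them. So your case check, once written out, covers a point the paper's proof passes over.

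You can shorten that check by noting that $H_f$ must be core-free in $G$, because $\pi$ is the Galois closure of $f$. For $D_n$ this leaves only the trivial group and the reflection subgroups. When $n$ is even, the two classes of reflection subgroups are told apart by which order-$2$ branch point carries the two unramified points.

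There is one slip in Step~2: not every Platonic signature has distinct entries, since $A_4$ has signature $(2,3,3)$. This does no harm, for either of two reasons:
\begin{enumerate}
\item The normalizer $S_4$ of $A_4$ induces an involution of $\PP^1/A_4$ that swaps the two order-$3$ branch points.
\item The point stabilizers over both order-$3$ branch points are Sylow $3$-subgroups of $A_4$, hence conjugate. So every $G/H$ already has the same partition over both points, and no alignment is needed.
\end{enumerate}
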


\begin{proof}
    Let $G_f$ and $G_g$ be the Galois groups of the Galois closures $\bar f, \bar g$ of $f, g$, respectively.
    As discussed, the signature of $f$ determines $G_f$ up to conjugacy, and our assumption that $f$ and $g$ have the same ramification types implies that there is a M\"obius transformation $\mu$ such that $G_f = \mu G_g \mu\inv$.
    Thus $\bar f$ and $\bar g \mu\inv$ both have Galois group $G_f$.
    Note that $\bar g \mu\inv $ is also a Galois closure of $g$, so we may suppose without loss of generality that $\mu$ is the identity and that $G := G_f$ is the Galois group of both $\bar f$ and $\bar g$.

    Let $\bar f = ff'$ be a factorization of $\bar f$ through $f$.
    Then the ramification type of $f'$ is determined by that of $f$ and $\bar f$.
    The rational function $f'$ is Galois with Galois group $H_f \subseteq G$.
    The isomorphism class of $H_f$ is determined by the ramification type of $f'$, hence by that of $f$.
    Therefore our assumption that $f$ and $g$ have the same ramification type implies that the subgroups $H_f, H_g \subseteq G$ are conjugate in the group of all M\"obius transformations.
    If $G$ is isomorphic to $C_n$, $A_4$, $S_4$, $A_5$, or $D_n$ with $n$ odd, then $G$ has a unique conjugacy class of subgroups of each given order.
    If $G$ is isomorphic to $D_n$ with $n$ even, then $G$ has two conjugacy classes of reflections which are related by an outer automorphism; this outer automorphism can be realized by conjugating $G$ by a M\"obius transformation.
    Thus after potentially replacing $\bar g$ by some $\bar g \mu\inv$ we can assume that $\bar f$ and $\bar g$ have the same Galois group and that $H := H_f = H_g$.
    Then we have factorizations $\bar f = ff'$ and $\bar g = gg'$ where $f'$ and $g'$ have the same Galois group $H$.
    Therefore there is a M\"obius transformation $\mu_2$ such that $f' = \mu_2 g'$.
    Since $\bar f$ and $\bar g$ have the same Galois group, there is a M\"obius transformation $\mu_1$ such that $\bar f = \mu_1 \bar g$.
    Thus
    \[
        ff' = \bar f = \mu_1 \bar g = \mu_1 g g' = \mu_1 g\mu_2\inv \mu_2 g' = \mu_1g\mu_2\inv f'.
    \]
    Since $f'$ is non-constant, it follows that $f = \mu_1 g \mu_2\inv$.
\end{proof}

Lemma \ref{lemma: gen 0 exceptional rigidity} reduces the classification of all rational functions with genus zero Galois closure up to essential equivalence to determining their ramification types; this is a finite calculation depending only on the groups and how they act on $\PP^1$
We collect the results of this calculation in Table \ref{table: gen 0 census} with ramification types grouped by signature.
The maps with cyclic and dihedral signature are well-known.
For example, $x^d$ has the cyclic ramification type $((d^1),(d^1))$.
The rational function $\frac{x^d + x^{-d}}{2}$ has the Galois dihedral ramification type $((2^d), (2^d), (d^2))$.
Recall that the $d$th Chebyshev polynomial $T_d(x)$ is defined by the functional equation
\[
    T_d(\tfrac{x + x\inv}{2}) = \tfrac{x^d + x^{-d}}{2},
\]
which is equivalent to the following commutative diagram.
\[
    \begin{tikzcd}
    \mathbb{P}^1 \arrow[d, "\frac{x + x^{-1}}{2}"'] & \mathbb{P}^1 \arrow[l, "x^d"'] \arrow[d, "\frac{x + x^{-1}}{2}"] \\
    \mathbb{P}^1                                    & \mathbb{P}^1 \arrow[l, "T_d"]                                   
    \end{tikzcd}
\]
Note that we have opted for a slightly different normalization of our Chebyshev polynomials than is standard.
The upshot of this normalization is that the critical points of $T_d$ are $\{\pm 1, \infty\}$ which contains (and typically equals) the set of critical values of $T_d$.

\begin{table}[h!]
    \begin{tabular}{|c|c|c|c|}
        \hline
        signature & $G$ & degree & ramification type \\
        \hline
        $(d,d)$ & $C_d$ & $d$ & $((d^1), (d^1))$ \\
        \hline
        $(2,2,d)$ & $D_d$ & $2d$ & $((2^d), (2^d), (d^2))$\\
        & if $d = 2k + 1$ & $d$ & $((1^12^k), (1^12^k), (n^1))$\\
        & if $d = 2k\phantom{+1\,\,\,}$ & $d$ & $((1^22^{k-1}), (2^k),(n^1))$\\
        \hline
        $(2,3,3)$ & $A_4$ & 12 & $((2^6),(3^4),(3^4))$\\
              & & 6 & $((1^22^2),(3^2),(3^2))$\\
              & & 4 & $((1^13^1),(2^2),(1^13^1))$\\
        \hline
        $(2,3,4)$ & $S_4$ & 24 & $((2^{12}),(3^8),(4^6))$\\
              & & 12 & $((1^22^5),(3^4),(4^3))$\\
              & & 12 & $((2^6),(3^4),(2^24^2))$\\
              & & 8  & $((2^4),(1^23^2),(4^2))$\\
              & & 6  & $((2^3),(3^2),(1^24^1))$\\
              & & 6  & $((1^22^2),(3^2),(2^14^1))$\\
              & & 4  & $((1^22^1),(1^13^1),(4^1))$\\
        \hline
        $(2,3,5)$ & $A_5$ & 60 & $((2^{30}),(3^{20}),(5^{12}))$\\
              & & 30 & $((1^22^{14}),(3^{10}),(5^6))$\\
              & & 20 & $((2^{10}),(1^23^6),(5^4))$\\
              & & 15 & $((1^32^6),(3^5),(5^3))$\\
              & & 12 & $((2^6),(3^4),(1^25^2))$\\
              & & 10 & $((1^22^4),(1^13^3),(5^2))$\\
              & & 6  & $((1^22^2),(3^2),(1^15^1))$\\
              & & 5  & $((1^12^2),(1^23^1),(5^1))$\\
        \hline
    \end{tabular}
    \caption{Rational functions with genus 0 Galois closure.}
    \label{table: gen 0 census}
\end{table}

\subsection{Genus one Galois closure}
\label{sec: genus one galois}

If $\overline\C$ has genus one, then \eqref{eqn: R-H galois} further simplifies to
\[
    2 = \sum_p \Big(1 - \frac{1}{m_f(p)}\Big).
\]
This equation has only four solutions, namely $(2,2,2,2), (3,3,3), (2,4,4),$ and $(2,3,6)$.
Each of these signatures may occur with infinitely many non-isomorphic Galois groups.

\subsubsection{Cyclic quotients and isogenies}
First suppose that $\C$ also has genus one.
Lemma \ref{lemma: canonical factorization} shows that such maps $f$ have a canonical factorization through a cyclic quotient.

\begin{lemma}
\label{lemma: canonical factorization}
    Let $f : \C \to \PP^1$ be a map from an irreducible genus one curve $\C$ and suppose that the Galois closure of $f$ has genus one.
    Then $f$ factors as $f = \pi \psi$ where $\psi : \C \to \C'$ is a map between genus one curves and $\pi : \C' \to \PP^1$ is a cyclic quotient.
\end{lemma}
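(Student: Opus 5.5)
Let $\bar f : \overline\C \to \PP^1$ be the Galois closure of $f$, with Galois group $G$. Since $\overline\C$ maps onto $\C$, and both have genus one, Lemma \ref{lemma: RH cor}(3) says that the factor $\overline\C \to \C$ is unramified and Galois. Fix a base point so that $\overline\C$ is an elliptic curve $E$. Every automorphism of $E$ has the form $x \mapsto \alpha(x) + t$, where $\alpha \in \aut(E, O)$ fixes the origin and $t \in E$ is a translation. This gives a homomorphism $\rho : G \to \aut(E,O)$, $\sigma \mapsto \alpha$, whose image is a finite cyclic group (of order $1,2,3,4$ or $6$). Let $T := \ker \rho$; it is a normal subgroup of $G$ consisting of translations.

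The plan is to take $\C' := \overline\C / (T H)$, where $H \subseteq G$ is the subgroup fixing $\C$, so that $\C = \overline\C/H$. The steps are as follows.

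First, show that $H \subseteq T$. Here $\overline\C \to \C$ is unramified. A nontrivial element of $G$ with nontrivial linear part $\alpha \neq 1$ has a fixed point on $E$, since $x \mapsto \alpha x + t$ has a fixed point because $1-\alpha$ is surjective. So such an element would create ramification. Hence $H$ acts freely, so $H \subseteq T$.

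Second, since $T$ is normal and $H \subseteq T$, the map $\C = \overline\C/H \to \overline\C/T =: \C'$ is an unramified map $\psi$ between genus one curves; $\C'$ has genus one because $T$ acts by translations, so the quotient is unramified.

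Third, the induced map $\pi : \C' \to \overline\C/G \cong \PP^1$ is Galois with group $G/T \cong \rho(G)$. This is cyclic because it embeds in the cyclic group $\aut(E,O)$. Thus $\pi$ is a cyclic quotient and $f = \pi\psi$.

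The key points to verify carefully are the following.
1. The fixed-point claim for elements with nontrivial linear part.
2. The image of $\rho$ is cyclic. Finite subgroups of $\aut(E,O)$ are cyclic, being finite subgroups of the roots of unity in $\mathrm{End}(E)^\times$.
3. If "cyclic quotient" is meant to require degree at least two: since $\C'$ has genus one and $\PP^1$ genus zero, $\pi$ cannot be an isomorphism, so $\rho(G)$ is nontrivial automatically.

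The main subtle step is the first: ensuring the unramified condition forces $H$ into the translation subgroup. It uses that $1-\alpha$ is a nonzero isogeny, hence surjective, whenever $\alpha \neq 1$.
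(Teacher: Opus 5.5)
Your proposal is correct and follows essentially the same route as the paper. Your $T$ is the paper's normal subgroup $N$ of translations in $G$, and your $H$ is the paper's $N'$. The paper likewise uses Lemma \ref{lemma: RH cor}(3) to get $N' \subseteq N$ and takes $\C' = \overline{\C}/N$ with cyclic Galois group $G/N$. You additionally spell out the fixed-point argument showing that an unramified $\overline{\C} \to \C$ forces $H$ into the translations, a step the paper leaves implicit.
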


\begin{proof}
    Let $\bar f : \overline{\C} \to \PP^1$ be the Galois closure of $f$ and suppose that $\bar f = f f'$ is a factorization of $\bar f$ through $f$.
    The Galois group $G$ of $\bar f$ is a finite automorphism group of $\overline{\C}$, hence is a semidirect product of a group $N$ of translations by torsion points and a cyclic group $H$ of elliptic curve automorphisms.
    Let $N'\subseteq G$ be the Galois group of $f': \overline{\C} \to \C$.
    Lemma \ref{lemma: RH cor} implies $f'$ is unramified, hence $N' \subseteq N$.
    Since $N$ is normal in $G$, the quotient $\C' := \overline{\C}/N$ has genus one and the left factor $\pi : \C' \to \PP^1$ of $\bar f$ is Galois with cyclic Galois group $G/N \cong H$. 
    The containment $N' \subseteq N$ implies that $\pi$ is a left factor of $f$, hence that $f = \pi \psi$ for some $\psi : \C \to \C'$.
\end{proof}

There are essentially four possible cyclic quotients $\pi$ as described in Lemma \ref{lemma: canonical factorization}, one for each signature.
The quotient with signature $(2,2,2,2)$ exists for any genus one curve and the configuration of critical values of $\pi$ up to M\"obius transformation determine the genus one curve $\C$ up to isomorphism.
The quotients corresponding to the other three signatures are rigid and only exist for particular genus one curves with extra symmetries.
Table \ref{table: genus 1 to 0 galois} records the degree of these cyclic quotients and the $j$-invariants of the curves they correspond to.

\begin{table}[h]
    \centering
    \begin{tabular}{|c|c|c|}
    \hline
       signature  & degree & $j$-invariant \\
    \hline
        $(2,2,2,2)$ & 2 & Any \\
        $(3,3,3)$ & 3 & 0 \\
        $(2,3,6)$ & 6 & 0 \\
        $(2,4,4)$ & 4 & 1728\\
    \hline        
    \end{tabular}
    \caption{Cyclic quotients $\pi : \C \to \PP^1$ of genus one curves.}
    \label{table: genus 1 to 0 galois}
\end{table}

Let $\E_0, \E_1$ be complex elliptic curves (genus one curves with a chosen base point $\O$ which serves as the identity for the group law).
Any map $\psi : \E_1 \to \E_0$ may be expressed as $\psi = \phi + \gamma$ where $\phi$ is an isogeny and $\gamma \in \E_0$ is a point.
Recall that any complex elliptic curve $\E$ is analytically isomorphic to $\CC/\Lambda$ where $\Lambda \subseteq \CC$ is a lattice.
The isomorphism class of $\E$ is determined by $\Lambda$ up to homothety.
Let $\Lambda_i \subseteq \CC$ be a lattice such that $\E_i \cong \CC/\Lambda_i$.
The isogenies $\phi : \E_1 \to \E_0$ correspond to complex numbers $\alpha$ such that $\alpha \Lambda_1 \subseteq \Lambda_0$.
The degree of $\psi = \phi + \gamma$ is the index $[\Lambda_0 : \alpha \Lambda_1]$.
For more on isogenies of elliptic curves, see Silverman \cite[III.4]{Silverman_ellcurves}.

The automorphism group of a complex elliptic curve can only be cyclic of order 2, 4, or 6, with the latter two cases only occurring for elliptic curves with $j$-invariant 1728 and 0, respectively \cite[Thm. 10.1]{Silverman_ellcurves}.
If $\E = \CC/\Lambda$, then automorphisms of $\E$ may be expressed as $\zeta x \bmod \Lambda$ for some root of unity $\zeta$.
The following lemma characterizes the critical points of each of the cyclic quotients $\pi$.
We will use this to classify the ramification types of generalized Latt\'es maps in the subsequent section.

\begin{lemma}
\label{lemma: critical points of cyclic quotients}
    Let $\pi : \E \to \PP^1$ be a quotient by $\zeta x \bmod \Lambda$ for some root of unity $\zeta$.
    \begin{enumerate}
        \item If $\zeta = -1$, then $\pi$ has signature $(2,2,2,2)$ and the critical points of $\pi$ are the 2-torsion points on $\E$.

        \item If $\zeta = \omega$ is a third root of unity, then $\pi$ has signature $(3,3,3)$, and the critical points of $\pi$ are the $(\omega - 1)$-torsion points on $\E$.

        \item If $\zeta = -\omega$ is a 6th root of unity, then $\pi$ has signature $(2,3,6)$.
        The identity $\O$ is a critical point of $\pi$ with ramification index 6; the two primitive $(\omega - 1)$-torsion points have ramification index 3; and the three primitive 2-torsion points have ramification index 2.

        \item If $\zeta = i$ is a 4th root of unity, then $\pi$ has signature $(2,4,4)$.
        The $(i - 1)$-torsion points have $\pi$-ramification index 4 and the two primitive 2-torsion points have ramification index 2.
    \end{enumerate}
\end{lemma}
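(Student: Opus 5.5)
The plan is to work in the analytic model $\E = \CC/\Lambda$, where $\pi$ is the quotient by the cyclic group $H = \langle \zeta \rangle$ acting by $x \mapsto \zeta x \bmod \Lambda$. Since $\pi$ is Galois with group $H$, the ramification index of $\pi$ at a point $P \in \E$ equals the order of the stabilizer $\mathrm{Stab}_H(P)$. So for each case we need to compute, for every $k$, the set of points fixed by $\zeta^k$. A point $P$ is fixed by $\zeta^k$ exactly when $(\zeta^k - 1)P = \O$, that is, when $P$ lies in the kernel of the endomorphism $\zeta^k - 1$. The degree of this endomorphism is $|\zeta^k - 1|^2$, because the degree of multiplication by $\alpha$ is the index $[\Lambda : \alpha\Lambda] = |\alpha|^2$. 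Moreover $\zeta^k - 1$ is unramified, so its kernel has exactly $|\zeta^k-1|^2$ points. Once these fixed-point sets are known, the signature follows by counting orbits, and we can cross-check it against Riemann--Hurwitz in the form \eqref{eqn: R-H galois} with $\chi(\overline\C) = 0$.

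The cases go as follows.

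(1) Take $\zeta = -1$. The fixed points of $-1$ form $\ker(-2) = \E[2]$, which has $4$ points. Each has stabilizer of order $2$, and they lie in distinct fibers, giving signature $(2,2,2,2)$.

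(2) Take $\zeta = \omega$. Here $|\omega - 1|^2 = 3$, so $\ker(\omega - 1)$ has $3$ points. Each is fixed by all of $H \cong C_3$, so each has index $3$, giving signature $(3,3,3)$.

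(3) Take $\zeta = -\omega$, a primitive $6$th root of unity. The points fixed by all of $H$ form $\ker(-\omega - 1)$. Since $-\omega - 1 = \omega^2$ is a unit, this kernel is just $\{\O\}$, so $\O$ has index $6$. The points fixed by $\zeta^2$, a primitive cube root of unity, form a kernel of size $|\zeta^2 - 1|^2 = 3$. Removing $\O$ leaves the two primitive $(\omega - 1)$-torsion points, each with index $3$. The points fixed by $\zeta^3 = -1$ form $\E[2]$. Removing $\O$ leaves the three primitive $2$-torsion points, each with index $2$. As a consistency check, $H$ permutes each of these sets transitively, so each gives a single critical value, and the signature is $(2,3,6)$.

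(4) Take $\zeta = i$. The points fixed by $i$ form $\ker(i - 1)$, of size $|i - 1|^2 = 2$; these are $\O$ and one $2$-torsion point, each with index $4$. The points fixed by $i^2 = -1$ form $\E[2]$, of size $4$. Removing the two $(i-1)$-torsion points leaves two primitive $2$-torsion points, which $i$ swaps; each has index $2$. Hence the signature is $(2,4,4)$.

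The main obstacle is bookkeeping, for two reasons. First, in cases (3) and (4) the different fixed-point sets overlap, so we must be careful to subtract the points with larger stabilizer when assigning indices. Second, "primitive" has to be interpreted as "not in the smaller kernel." We also need the fact, taken from the cited theory (Silverman), that the automorphism groups of order $4$ and $6$ occur only for $\Lambda$ homothetic to $\ZZ[i]$ or $\ZZ[\omega]$, so that multiplication by $i$ and by $\omega$ is actually defined on $\E$.
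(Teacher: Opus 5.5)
Your proposal is correct and follows essentially the same route as the paper: you identify ramification indices with stabilizer orders, compute the fixed points of each power $\zeta^k$ as $\ker(\zeta^k - 1)$ (using that $-\omega - 1 = \omega^2$ is a unit in case (3)), and then remove the points that have larger stabilizers. Your explicit kernel count $|\zeta^k - 1|^2$ and the orbit--stabilizer justification of transitivity make precise what the paper only asserts, and in case (3) you correctly get index $2$ for the primitive $2$-torsion points, where the paper's proof has a typo saying $3$.
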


\begin{proof}
    (1)
    The map $-x \bmod \Lambda$ is an order 2 automorphism of any elliptic curve $\CC/\Lambda$.
    The critical points of the quotient $\pi$ are those points which are fixed by $-x$.
    If $-\gamma \equiv \gamma \bmod \Lambda$, then $2\gamma \equiv 0 \bmod \Lambda$, hence $\gamma$ is a 2-torsion point.
    Each of these four points has ramification index 2.
    Hence the signature of $\pi$ is $(2,2,2,2)$.

    (2)
    If $j(\E) = 0$, then $\E \cong \CC/\Lambda$ where $\Lambda = \ZZ[\omega]$ and $\omega$ is a third root of unity. 
    The automorphism group of $\E$ is cyclic of order 6 generated by $-\omega x \bmod \Lambda$.
    Suppose $\pi$ is the quotient of $\E$ by $\omega x \bmod \Lambda$.
    Then $\pi$ has degree 3 and is totally ramified at the fixed points of $\omega x \bmod \Lambda$, which are precisely the $(\omega - 1)$-torsion points of $\E$.
    Hence the signature of $\pi$ is $(3,3,3)$.

    (3)
    Suppose now that $\pi$ is the quotient of the $j(\E) = 0$ curve by $\sigma(x) := -\omega x \bmod \Lambda$.
    Since $-\omega - 1 = \omega^2$ is a unit, the only fixed point of $\sigma$ is the identity, hence $\pi$ is totally ramified at $\O$.
    The fixed points of $\sigma^2 = \omega^2 x \bmod \Lambda$ are the $(\omega - 1)$ torsion points, hence the two primitive $(\omega - 1)$-torsion points each have ramification index 3 and $\sigma$ acts transitively on these two points.
    The fixed points of $\sigma^3 = -x \bmod \Lambda$ are the 2-torsion points, hence the primitive 2-torsion points each have ramification index 3 and $\sigma$ acts transitively on these three points.
    Thus the signature of $\pi$ is $(2,3,6)$.

    (4)
    Finally suppose that $j(\E) = 1728$, hence $\E \cong \CC/\Lambda$ where $\Lambda = \ZZ[i]$.
    Let $\sigma(x) := ix \bmod \Lambda$.
    The fixed points of $\sigma$ are the two $(i - 1)$-torsion points, which each have ramification index 4 under $\pi$.
    The fixed points of $\sigma^2 = -x \bmod \Lambda$ are the 2-torsion points, hence the two primitive 2-torsion points have ramification index 2 under $\pi$ and $\sigma$ acts transitively on these points.
    Therefore the signature of $\pi$ is $(2,4,4)$.
\end{proof}

\subsubsection{Generalized Latt\'es maps}
\label{sec: lattes}

A \textbf{generalized Latt\'es map} is a rational function $f$ for which there exists elliptic curves $\E_i$, cyclic quotients by elliptic curve automorphisms $\pi_i : \E_i \to \PP^1$, and a map $\psi : \E_1 \to \E_0$ such that the following diagram commutes.
\begin{equation}
\label{eqn: gen lattes def}
    \begin{tikzcd}
    \mathcal{E}_0 \arrow[d, "\pi_0"'] & \mathcal{E}_1 \arrow[l, "\psi"'] \arrow[d, "\pi_1"] \\
    \mathbb{P}^1                      & \mathbb{P}^1 \arrow[l, "f"]     
    \end{tikzcd}
\end{equation}
The diagram \eqref{eqn: gen lattes def} uniquely determines $f$ and we write $L_{\pi_0, \pi_1, \psi} := f$.
Note that 
\[
    L_{\pi_0, \pi_1, \psi_0}L_{\pi_1,\pi_2,\psi_1} = L_{\pi_0, \pi_2, \psi_0\psi_1}.
\]
The following lemma provides a simple way to recognize generalized Latt\'es maps.

\begin{lemma}
\label{lemma: normalizing lattes}
    Let $f$ be a rational function.
    Then $f$ is a generalized Latt\'es map if and only if there is some rational function $g$ such that $fg$ has a genus one Galois closure.
\end{lemma}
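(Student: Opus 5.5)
The plan is to prove the two directions separately. The converse comes directly from the structure of automorphism groups of genus one curves, as in the proof of Lemma \ref{lemma: canonical factorization}. The forward direction needs an auxiliary rational function $g$ that ``completes'' $\psi$ to something close to multiplication by an integer.

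($\Leftarrow$) Suppose $fg$ has Galois closure $\overline{fg} : \E \to \PP^1$ with $\gen(\E) = 1$ and Galois group $G$. As in Lemma \ref{lemma: canonical factorization}, write $G = N \rtimes H$, where $N$ consists of translations and $H$ is cyclic. Under the Galois correspondence, the rational functions $f$ and $fg$ correspond to subgroups $J' \supseteq J$ of $G$: the target of $g$ is $\E/J'$ and the source of $g$ is $\E/J$.
\begin{enumerate}
    \item Set $N' := J' \cap N$ and $\E_1 := \E/N'$. Since translations act freely, $\E_1$ has genus one.
    \item The group $J'/N'$ embeds in $G/N \cong H$, so it is cyclic, and it acts on $\E_1$ by automorphisms. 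Let $\pi_1 : \E_1 \to \E/J' \cong \PP^1$ be the quotient map.
    \item Let $\E_0 := \E/N$ and let $\pi_0 : \E_0 \to \PP^1$ be the cyclic quotient by $G/N$. Since $N' \subseteq N$, we get a natural map $\psi : \E_1 \to \E_0$.
    \item The square $\pi_0 \psi = f \pi_1$ commutes, because both sides are the natural map $\E/N' \to \E/G$.
\end{enumerate}
Finally, $J'/N'$ is nontrivial because $\E/J'$ has genus zero. Hence $\pi_1$ is a genuine cyclic quotient by elliptic curve automorphisms, possibly after choosing a fixed point of the group as origin. This exhibits $f$ as a generalized Latt\'es map.

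($\Rightarrow$) Write $f = L_{\pi_0,\pi_1,\psi}$ with $\psi = \phi + \gamma$. I would choose an isogeny $\phi' : \E_0 \to \E_1$ with $\phi\phi' = [n]$, for example the dual isogeny, so that $n = \deg\phi$. Let $\pi_2 : \E_0 \to \PP^1$ be the quotient by $-1$. Set $\chi := \phi' + c$, where the point $c \in \E_1$ is chosen so that $\chi$ intertwines $-1$ on $\E_0$ with an element of the automorphism group of $\pi_1$; since $\pi_1$'s group contains $-1$, taking $c$ to be $2$-torsion (or $0$) suffices. Then $\chi$ descends to a rational function $g = L_{\pi_1,\pi_2,\chi}$, and the composition rule gives
\[
fg = L_{\pi_0,\pi_2,\psi\chi}, \qquad \psi\chi = [n] + c' .
\]
Now $\pi_0 \circ ([n] + c')$ is Galois. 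Writing $c' = n\gamma'$, it equals $\pi_0 \circ [n] \circ \tau_{\gamma'}$, and $\pi_0[n]$ is Galois with group generated by the translations $\E_0[n]$ and the automorphism defining $\pi_0$, since $[n]$ commutes with that automorphism. Consequently $fg\,\pi_2$ is a Galois cover from a genus one curve, so the Galois closure of $fg$ is dominated by a genus one curve and has genus at most one.

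Main obstacle: excluding genus zero for that Galois closure. My plan is to compare signatures. By Lemma \ref{lemma: galois ramification}, the signature of $fg$ is bounded below pointwise by that of $\pi_0$ pulled through, and it contains the full Latt\'es signature, whose terms $\sum(1 - 1/m)$ add up to $2$. That is incompatible with equation \eqref{eqn: R-H galois} for a genus zero closure, which forces the sum to be strictly less than $2$. The descent condition on $\chi$ is routine but fiddly, and is the other place I expect care to be needed.
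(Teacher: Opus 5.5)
Your ($\Leftarrow$) direction is correct and is the paper's argument in group-theoretic form: $\E/(J'\cap N)$ is exactly the component of the fiber product of $f$ and $\pi_0$ through which the Galois closure factors.

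The gap is in ($\Rightarrow$), at the step you flagged. Your upper bound is fine: $fg\pi_2=\pi_0\circ([n]+c')$ is Galois, so the Galois closure of $fg$ is a quotient of $\E_0$. But Lemma~\ref{lemma: galois ramification} then gives only $m_{fg}(p)\mid m_{\pi_0}(p)$, not a lower bound. Part of the ramification of $\pi_0$ can be absorbed by the intermediate cover $\E_0\to\overline{\C}$, and this really happens for your $g$. Take $f=L_{\pi_0,\pi_1,\phi}$ with $\phi$ a $2$-isogeny and $\pi_0,\pi_1$ the quotients by $-1$. Then $n=2$, $\pi_2=\pi_0$, and $c'=\phi(c)\in\E_0[2]$. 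Translation by $c'$ commutes with $-1$, so it descends to a M\"obius $\mu$, and $fg=\mu\circ L_{\pi_0,\pi_0,[2]}$. This map is Galois with group $\E_0[2]\cong(\ZZ/2)^2$ and ramification type $((2^2),(2^2),(2^2))$. So the Galois closure of $fg$ is $\PP^1$, and its signature $(2,2,2)$ does not contain the Latt\'es signature $(2,2,2,2)$.

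Your signature argument does work when $n=\deg\phi\ge 3$: then $([n]+c')\inv(P)$ has $n^2>4$ points, so one of them is not a critical point of $\pi_2$. It fails for $\deg\phi\le 2$. Separately, the descent step is false when $\pi_1$ has signature $(3,3,3)$. Its group is $\langle\omega\rangle$, which does not contain $-1$, so no $\chi=\phi'+c$ satisfies $\chi(-x)=\omega^k\chi(x)$, and your $g$ does not exist. There $\pi_2$ would have to be the quotient of $\E_0$ by $\omega$.

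The missing idea is the one the paper uses. The Galois closure of $fg$ dominates the Galois closure of the right factor $g$, because a Galois cover of the base that factors through the source of $g$ is also Galois over the target of $g$. So choose $g$ whose own Galois closure already has genus one, and for which $fg$ is still covered by a genus one Galois map. The paper takes $g=L_{\pi_1,\pi_1,[m]}$. Then $fg=L_{\pi_0,\pi_1,\psi[m]}$ has Galois closure dominated by the genus one Galois closure of $\pi_0\psi[m]$, and dominating that of $g$.

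This needs $m\ge 3$. For $m=2$ and a $(2,2,2,2)$ quotient, $L_{\pi_1,\pi_1,[2]}$ is exactly the $(\ZZ/2)^2$-Galois map above, so the paper's ``$d\ge 2$'' should be read as $d\ge 3$. For $m\ge 3$, each fiber $[m]\inv(P)$ has $m^2\ge 9$ points while $\pi_1$ has at most $6$ critical points, so $g$ has the full signature of $\pi_1$. You can also rescue your own construction: fix $\pi_2$ in the $(3,3,3)$ case as above, then precompose your $g$ with $L_{\pi_2,\pi_2,[m]}$ for some $m\ge 3$. Then $fg\pi_2=\pi_0\circ([nm]+c')$ still gives the upper bound, and the lower bound comes from the right factor.
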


\begin{proof}
    First we prove necessity.
    Let $f = L_{\pi_0, \pi_1, \psi}$.
    The Galois closure of $\pi_0\psi$ has genus one, hence the Galois closure of $f$ must have genus at most one.
    For $d \geq 2$, let $L_{\pi_1, \pi_1, [d]}$ be the standard Latt\'es map associated to the multiplication-by-$d$ endomorphism of $\E_1$.
    The Galois closure of every $L_{\pi_1, \pi_1, [d]}$ has genus one and $fL_{\pi_1, \pi_1, [d]}$ is also a generalized Latt\'es map.
    Thus $fL_{\pi_1, \pi_1, [d]}$ has a genus one Galois closure.

    Next we prove sufficiency.
    Let $h : \E_2 \to \PP^1$ be the Galois closure of $fg$ where $\E_2$ has genus one.
    Lemma \ref{lemma: canonical factorization} implies that $h$ factors as
    \[
        h: \E_2 \xrightarrow{\bar\psi} \E_0 \xrightarrow{\pi_0} \PP^1
    \]
    where $\E_0$ is an elliptic curve and $\pi_0$ is a quotient by elliptic curve automorphisms.
    Let $\E_1$ be the irreducible component of the fiber product of $f$ with $\pi_0$ through which $h$ factors.
    Lemma \ref{lemma: RH cor} implies that $\E_1$ also has genus one.
    Let $\psi : \E_1 \to \E_0$ be the map through which $\bar\psi$ factors and let $\pi_1 : \E_1 \to \PP_1$ be the restriction of the fiber product projection to $\E_1$, so that $\pi_0 \psi = f \pi_1$.
    Since $\pi_0$ is Galois with cyclic Galois group, the second group isomorphism theorem implies the same is true for $\pi_1$.
    Therefore $f = L_{\pi_0,\pi_1,\psi}$ is a generalized Latt\'es map.
    Note that if we are willing to change the base points on $\E_0, \E_1$, we may assume that $\psi$ is an isogeny.
\end{proof}

Returning to our classification of low-genus maps, suppose that $f : \C \to \PP^1$ is a map such that the Galois closure $\overline{\C}$ has genus one and $\C \cong \PP^1$ has genus zero.
Then Lemma \ref{lemma: normalizing lattes} implies that $f$ must be a generalized Latt\'es map.
Proposition \ref{prop: gen lattes ram types} classifies all the possible ramification types for generalized Latt\'es maps.

\begin{proposition}
\label{prop: gen lattes ram types}
    If $f = L_{\pi_0, \pi_1, \psi}$ is a generalized Latt\'es map with degree $d$, then the ramification type of $f$ is one of the possibilities listed in Table \ref{table: genus 1 gal closure}.
\end{proposition}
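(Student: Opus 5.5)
The plan is to compute the ramification of $f$ directly from the commutative square \eqref{eqn: gen lattes def}, using that $\pi_0$ and $\pi_1$ are Galois with cyclic groups and that $\psi$ is unramified. First, as in the end of the proof of Lemma \ref{lemma: normalizing lattes}, we change base points so that $\psi$ is an isogeny. By Lemma \ref{lemma: RH cor}(3), $\psi$ is unramified.

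Next we compare ramification indices around the square. Fix $q \in \PP^1$ with $f(q) = p$, and choose $x \in \E_1$ with $\pi_1(x) = q$. Multiplicativity of ramification indices gives
\[
    e_f(q)\, e_{\pi_1}(x) = e_{\pi_0}(\psi(x))\, e_\psi(x) = e_{\pi_0}(\psi(x)).
\]
Hence $e_f(q) = e_{\pi_0}(\psi(x))/e_{\pi_1}(x)$. The ramification indices of $\pi_0$ and $\pi_1$ at each point are given by Lemma \ref{lemma: critical points of cyclic quotients}: they are read off from the stabilizer of the point in the cyclic group $\langle \zeta \rangle$. So the whole problem reduces to tracking how $\psi$ maps the special torsion points of $\E_1$ (the $2$-torsion, $(\omega-1)$-torsion and $(i-1)$-torsion points) onto the special points of $\E_0$. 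Since $\pi_0\psi = f\pi_1$ and $\pi_0$ is Galois, the degree $\deg \pi_1 = n_1$ divides $\deg \pi_0 = n_0$. Moreover $\psi$ must intertwine the actions: $\psi(\zeta_1 x) = \zeta_0^{k} \psi(x)$ for some $k$ with $\zeta_1 \mapsto \zeta_0^k$ of order $n_1$. Concretely, the isogeny is multiplication by some $\alpha$ with $\alpha\Lambda_1 \subseteq \Lambda_0$.

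The case analysis then splits by the pair of signatures $(\pi_0, \pi_1)$, with $n_1 \mid n_0 \in \{2,3,4,6\}$. The possible pairs are $(2,1)$-type cases (where $\pi_1$ is trivial, which is the genus-one-source case and is excluded since the source is $\PP^1$), $(2,2)$, $(3,3)$, $(4,2)$, $(4,4)$, $(6,2)$, $(6,3)$ and $(6,6)$. Note that $\pi_1$ must be a genuine quotient because the source of $f$ is $\PP^1$. In each case the argument runs in three steps:
\begin{enumerate}
    \item Parametrize $\alpha$ modulo the relevant torsion. For instance, in the $(2,2,2,2)$ case, the relevant data is where the four $2$-torsion points of $\E_1$ land in $\E_0[2]$, which depends on $\ker\psi \cap \E_1[2]$ and on the parity of $\deg\psi$ relative to $\E_0[2]$.
    \item Count the points of $\pi_1^{-1}$ of each type over each critical value of $\pi_0$, using that $\deg f = \deg\psi \cdot n_1/n_0$ after normalizing by the group orders.
    \item Write down the resulting partitions.
\end{enumerate}
Ramification of $f$ over a critical value $p$ of $\pi_0$ with index $m$ then consists of parts $m$ (at generic preimages) together with parts $m/e_{\pi_1}(x)$ at the special points of $\E_1$ mapping over $p$. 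The number of each type is determined by $\deg f$ modulo small numbers and by the kernel intersection data.

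The main obstacle is the bookkeeping in step (1) for the non-$(2,2,2,2)$ signatures. There $\alpha$ lies in an order $\ZZ[\omega]$ or $\ZZ[i]$ (or $\Lambda_0,\Lambda_1$ differ when $n_1<n_0$), and the image of an $(\omega-1)$- or $(i-1)$-torsion point depends on the residue of $\alpha$ modulo $(\omega-1)$, $(i-1)$, or $2$. To handle this, the plan is to classify $\alpha$ by its residues modulo the primes above $2$ and $3$, i.e.\ by $\deg \psi = N(\alpha)$ modulo $2$, $3$, $4$, and to derive each row of Table \ref{table: genus 1 gal closure} as the generic count plus corrections at the few special points. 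As a consistency check on each row, we will verify Riemann--Hurwitz for $f$ with source genus zero.
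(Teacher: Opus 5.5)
Your outline is the paper's own argument: normalize $\psi$ to an isogeny, use $e_f(q)\,e_{\pi_1}(x)=e_{\pi_0}(\psi(x))$, and track where the special torsion points of $\E_1$ land. It breaks exactly at the step you flag as bookkeeping. It also cannot be completed, because Table~\ref{table: genus 1 gal closure} is incomplete.

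\textbf{The normalization.} Write $\psi=\alpha x+\gamma$ with both $\pi_i$ quotients by origin-fixing automorphisms. The relation $\psi(\zeta_1x)=\zeta_0^k\psi(x)$ only forces $\zeta_1\gamma=\gamma$. When $\deg\pi_1<\deg\pi_0$, $\gamma$ can be fixed by $\zeta_1$ but not by $\zeta_0$: a primitive $2$-torsion point in the cases $(4,2)$ and $(6,2)$, or a primitive $(\omega-1)$-torsion point in the case $(6,3)$. Moving the origin of $\E_0$ to $\gamma$ then destroys the form of $\pi_0$ that Lemma~\ref{lemma: critical points of cyclic quotients} needs, and in general you cannot absorb $\gamma$ by moving the origin of $\E_1$ either. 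These cases give genuinely new types.
\begin{itemize}
\item $f=(x^3+1)^2$ has type $((2^3),(1^33^1),(6^1))$ and signature $(2,3,6)$. So its Galois closure has genus one, and it is a generalized Latt\'es map by Lemma~\ref{lemma: normalizing lattes}. Its type arises from $\pi_0,\pi_1$ the quotients of $\CC/\ZZ[\omega]$ by $-\omega$ and $\omega$, with $\psi(x)=(1-\omega)x+p$ and $p$ primitive $(\omega-1)$-torsion.
\item This type is not in the table, and no normalized diagram can produce it. If $\psi$ were an isogeny, $\pi_1(\O)$ would lie over the unique index-$6$ critical value of $\pi_0$ with $f$-index $6/\deg\pi_1<6$.
\item Likewise $(x^2-1)^3$ has type $((1^42^1),(3^2),(6^1))$, which is also missing.
\end{itemize}

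\textbf{Isogenies alone.} The second problem persists even when $\psi$ is an honest isogeny. Neither $\deg\psi=N(\alpha)$ modulo $2,3,4$ nor $|\ker\psi\cap\E_1[2]|$ determines which critical fibers of $\pi_0$ the special points of $\E_1$ lie over. When $\deg\pi_1<\deg\pi_0$, $\Lambda_1$ need not be a module over $\mathrm{End}(\E_0)$, so there is no residue of $\alpha$ to reduce. Moreover, the points of $\E_0[2]$ lie over critical values of $\pi_0$ with different indices.

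Concretely, take $\Lambda_0=\ZZ[i]$ and $\Lambda_1=\ZZ+4i\ZZ$, with $\psi$ induced by the identity (degree $4$), $\pi_0$ the quotient by $i$, and $\pi_1$ the quotient by $-1$.
\begin{itemize}
\item $\ker\psi\cap\E_1[2]=\{0,2i\}$ has order $2$, as in row 19.
\item But $\psi(\E_1[2])=\{0,\tfrac12\}$ rather than $\E_0[i-1]$.
\item The degree-$8$ map $f$ has type $((1^22^3),(2^24^1),(4^2))$, which is not listed.
\end{itemize}

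\textbf{Consequence.} The proposition is false as stated. The paper's proof has the same two gaps: the isogeny normalization at the start, and the assertion in case (4) that $|\ker\psi\cap\E_1[2]|$ determines the row. Your Riemann--Hurwitz consistency check would not detect this, since all these types satisfy it.

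A correct classification must handle the cases $(\deg\pi_0,\deg\pi_1)\in\{(4,2),(6,2),(6,3)\}$ separately. There it must record the image of the special torsion of $\E_1$ under the affine map $\psi$, as a subset of $\E_0$ modulo $\langle\zeta_0\rangle$, and the table needs extra rows.

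A smaller slip: the correct degree formula is $\deg f=\deg\psi\cdot n_0/n_1$, not $n_1/n_0$.
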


\begin{table}[h]
    \centering
    \begin{tabular}{|c||c|c|l|l|}
        \hline
            & $\pi_0$ sign. & $\pi_1$ sign. & constraint & ramification type \\
        \hline
            1 & (2,2,2,2) & (2,2,2,2) & $d \equiv 1 \bmod 2$ & $((1^1 2^{\frac{d-1}{2}}),\, (1^1 2^{\frac{d-1}{2}}),\, (1^1 2^{\frac{d-1}{2}}),\, (1^1 2^{\frac{d-1}{2}}))$ \\
            2 &           & & $d \equiv 0 \bmod 2$ & $((1^22^\frac{d-2}{2}),\, (1^22^\frac{d-2}{2}),\, (2^\frac{d}{2}),\,(2^\frac{d}{2}))$ \\
            3 &           & & $d \equiv 0 \bmod 4$ & $((1^4 2^\frac{d-4}{2}),\, (2^\frac{d}{2}),\,(2^\frac{d}{2}),\, (2^\frac{d}{2}))$ \\
        \hline
            4 & (3,3,3)   & (3,3,3) & $d \equiv 1 \bmod 3$ & $((1^13^\frac{d-1}{3}),\, (1^13^\frac{d-1}{3}),\, (1^13^\frac{d-1}{3}))$ \\
            5 &           & & $d \equiv 0 \bmod 3$ & $((1^33^\frac{d-3}{3}),\, (3^\frac{d}{3}),\, (3^\frac{d}{3}))$ \\
        \hline
            6 & (2,3,6)   & (2,3,6) & $d \equiv 1 \bmod 6$ & $((1^12^\frac{d-1}{2}),\, (1^13^\frac{d-1}{3}),\, (1^16^\frac{d-1}{6}))$\\
            7 &           & & $d \equiv 4 \bmod 6$ & $((2^\frac{d}{2}),\, (1^13^\frac{d-1}{3}),\, (1^13^16^\frac{d-4}{6}))$\\
            8 &           & & $d \equiv 3 \bmod 6$ & $((1^12^\frac{d-1}{2}),\, (3^\frac{d}{3}),\, (1^12^16^\frac{d-3}{6}))$\\
            9 &           & & $d \equiv 0 \bmod 6$ & $((2^\frac{d}{2}),\, (3^\frac{d}{3}),\, (1^12^13^16^\frac{d-6}{6}))$\\
        \cline{3-5}
            10 &          & (3,3,3) & $d \equiv 2 \bmod 6$ & $((2^\frac{d}{2}),\, (1^23^\frac{d-2}{3}),\, (2^16^\frac{d-2}{6}))$\\
            11 &          & & $d \equiv 0 \bmod 6$ & $((2^\frac{d}{2}),\, (3^\frac{d}{3}),\, (2^36^\frac{d-6}{6}))$\\
        \cline{3-5}
            12 &          & (2,2,2,2) & $d \equiv 3 \bmod 6$ & $((1^32^\frac{d-3}{2}),\, (3^\frac{d}{3}),\, (3^16^\frac{d-3}{6}))$\\
            13 &          & & $d \equiv 0 \bmod 6$ & $((1^22^\frac{d-2}{2}),\, (3^\frac{d}{3}),\, (3^26^\frac{d-6}{6}))$\\
            14 &          & & $d \equiv 0\bmod 12$  & $((2^\frac{d}{2}),\, (3^\frac{d}{3}),\, (3^46^\frac{d-12}{6}))$\\
        \hline
            15 & (2,4,4)  & (2,4,4) & $d \equiv 1 \bmod 4$ & $((1^12^\frac{d-1}{2}),\, (1^14^\frac{d-1}{4}),\, (1^14^\frac{d-1}{4}))$ \\
            16 &          & & $d \equiv 2 \bmod 4$ & $((2^\frac{d}{2}),\, (1^24^\frac{d-2}{4}),\, (2^14^\frac{d-2}{4}))$ \\
            17 &          & & $d \equiv 0 \bmod 4$ & $((2^\frac{d}{2}),\, (4^\frac{d}{4}),\, (1^22^14^\frac{d-4}{4}))$ \\
        \cline{3-5}
            18 &          & (2,2,2,2) & $d \equiv 2 \bmod 4$ & $((1^22^\frac{d-2}{2}),\, (2^14^\frac{d-2}{4}),\, (2^14^\frac{d-2}{4}))$ \\
            19 &          & & $d \equiv 0 \bmod 4$ & $((2^\frac{d}{2}),\, (2^24^\frac{d-4}{4}),\, (2^24^\frac{d-4}{4}))$ \\
            20 &          & & $d \equiv 0 \bmod 4$ & $((2^\frac{d}{2}),\, (4^\frac{d}{4}),\, (2^44^\frac{d-8}{4}))$ \\
        \hline
    \end{tabular}
    \caption{Ramification types for generalized Latt\'es maps}
    \label{table: genus 1 gal closure}
\end{table}

\begin{proof}
    Let $\pi_0, \pi_1, \psi, \E_0, \E_1$ be as in \eqref{eqn: gen lattes def}.
    As noted at the end of the proof of Lemma \ref{lemma: normalizing lattes}, by changing base points on the $\E_i$ we may assume that $\psi$ is an isogeny.
    Since $\psi$ is unramified, the signature of $f\pi_1$ is the same as that of $\pi_0$ and the ramification type of $f = L_{\pi_0, \pi_1, \psi}$ is determined by that of $\pi_0, \pi_1$ and how $\psi$ maps the critical points of $\pi_1$ to those of $\pi_0$.
    We split the classification into cases based on the signature of $\pi_0$.

    (1) \textbf{$\pi_0$ has signature $(2,2,2,2)$.}
        The signature of $\pi_1$ must also be $(2,2,2,2)$ in this case.
        Lemma \ref{lemma: critical points of cyclic quotients} implies that the critical points of $\pi_i$ are the points in $\E_i[2]$.
        Note that $|\ker\psi \cap \E_1[2]|$ is either 1, 2, or 4, and these cases translate to rows 1, 2, 3 of Table \ref{table: genus 1 gal closure}.
        For example, if the intersection is trivial, that implies $\psi$ maps $\E_1[2]$ bijectively onto $\E_0[2]$.
        Thus if $p \in \E_1[2]$, then the unique unramified $f$-preimage of $\pi_0\psi(p)$ is $\pi_1(p)$.
        On the other hand, if the intersection has order 2, then $\psi$ maps $\E_1[2]$ two-to-one onto $\E_0[2]$.
        Hence there are two critical values of $f$ above which $f$ is evenly ramified, and the other two critical values each have exactly two unramified $f$-preimages.
        In all cases, the order of these intersections suffices to determine the ramification type of $f$.

    (2) \textbf{$\pi_0$ has signature $(3,3,3)$.}
        The signature of $\pi_1$ must also be $(3,3,3)$.
        Lemma \ref{lemma: critical points of cyclic quotients} implies that the critical points of $\pi_i$ are the points in $\E_i[\omega - 1]$.
        Since $|\ker\psi \cap \E_1[\omega - 1]|$ is either 1 or 3, the ramification for $f$ corresponds to rows 4 and 5 of Table \ref{table: genus 1 gal closure}.

    (3) \textbf{$\pi_0$ has signature $(2,3,6)$.}
        There are three possibilities for the signature of $\pi_1$, namely $(2,3,6)$, $(3,3,3)$, or $(2,2,2,2)$.
        \begin{enumerate}[label=(\roman*)]
            \item \textbf{$\pi_1$ has signature $(2,3,6)$.}
                Lemma \ref{lemma: critical points of cyclic quotients} tells us that the critical points of $\pi_1$ are $\E_1[2] \cup \E_1[\omega -1]$.
                Hence the ramification of $f$ is determined by 
                \[
                    \ker\psi \cap (\E_1[2] \cup \E_1[\omega -1]) = (\ker \psi \cap \E_1[2]) \cup (\ker \psi \cap \E_1[\omega - 1]).
                \]
                In this case $\psi$ is a $\ZZ[\omega]$-linear map, hence these intersections are also $\ZZ[\omega]$-modules.
                Since $\E_1[2]$ is a cyclic $\ZZ[\omega]$-module and 2 is inert in $\ZZ[\omega]$ the order of the intersection $|\ker \psi \cap \E_1[2]|$ can only be 1 or 4.
                The intersection $|\ker\psi \cap \E_1[\omega - 1]|$ has order either 1 or 3.
                This gives rise to 4 cases corresponding to rows 6 through 9 of Table \ref{table: genus 1 gal closure}.
            \item \textbf{$\pi_1$ has signature $(3,3,3)$.}
                Lemma \ref{lemma: critical points of cyclic quotients} implies that the critical points of $\pi_1$ are $\E_1[\omega - 1]$.
                Thus $|\ker \psi \cap \E_1[\omega - 1]|$ is either 1 or 3.
                These two cases correspond to rows 10 and 11 of Table \ref{table: genus 1 gal closure}.
            \item \textbf{$\pi_1$ has signature $(2,2,2,2)$.}
                Lemma \ref{lemma: critical points of cyclic quotients} implies that the critical points of $\pi_1$ are $\E_1[2]$.
                Thus $|\ker\psi \cap \E_1[2]|$ is either 1, 2, or 4, leading to rows 12, 13, 14 of Table \ref{table: genus 1 gal closure}.
        \end{enumerate}

    (4) \textbf{$\pi_0$ has signature $(2,4,4)$.}
        The possible signatures for $\pi_1$ are $(2,4,4)$ or $(2,2,2,2)$.
        In either case, Lemma \ref{lemma: critical points of cyclic quotients} implies that the critical points of $\pi_1$ are the elements of $\E_1[2]$.
        Thus $|\ker\psi \cap \E_1[2]|$ is either 1, 2, or 4.
        This gives us rows 15, 16, 17 when $\pi_1$ has signature $(2,4,4)$ and 18, 19, 20 when $\pi_1$ has signature $(2,2,2,2)$.
\end{proof}

\section{Correspondences}
\label{sec: correspondences}

Let $\D_1, \D_2$ be curves.
A \textbf{correspondence} $F : \D_1 \rightsquigarrow \D_2$ is a curve $\C_F \subseteq \D_1 \times \D_2$ such that the restriction to $\C_F$ of the coordinate projections are finite and surjective.
We say the \textbf{degree} of $F$ is $(d,e)$ if the projections of $\C_F$ to $\D_1$ and $\D_2$ have degrees $d$ and $e$, respectively. 
If $G : \D_2 \rightsquigarrow \D_3$ is another correspondence, then the composition $GF$ is defined by taking the fiber product of the two projections to $\D_2$ and then composing as in the following diagram.

\begin{center}
    \begin{tikzcd}[row sep=large, column sep=small]
        & & \mathcal{C}_{GF} \arrow[dl] \arrow[dr] & & \\
        & \mathcal{C}_F \arrow[dl] \arrow[dr] & & \mathcal{C}_G \arrow[dl] \arrow[dr] & \\
        \mathcal{D}_1 & & \mathcal{D}_2 & & \mathcal{D}_3
    \end{tikzcd}
\end{center}

Equivalently, $\C_{GF}$ is the normalization of the curve 
\[
    \{(p,r) \in \D_1 \times \D_3 : \exists\, q \in \D_2, (p,q) \in \C_F \text{ and } (q,r) \in \C_G\}.
\]
If $\C$ is a curve together with finite maps $f : \C \to \D_1$ and $g : \C \to \D_2$, then we say this pair of maps \textbf{induces} the correspondence $F$ where $\C_F$ is the normalization of the image curve $(f(t), g(t))$.
Note that $\C \cong \C_F$ precisely when $f$ and $g$ have no common right factors of degree at least two.
This is automatic if, for example, $\deg f$ and $\deg g$ are coprime.

The \textbf{transpose} of $F : \D_1 \rightsquigarrow \D_2$ is the correspondence $F^* : \D_2 \rightsquigarrow \D_1$ where $\C_{F^*}$ is the image of $\C_F$ under the map $\D_1 \times \D_2 \to \D_2 \times \D_1$ which swaps coordinates.
Note that $(GF)^* = F^*G^*$.

Suppose now that $F: \D \rightsquigarrow \D$ is a correspondence from $\D$ to itself and let $f_1, g_1 : \C_F \to \D$ denote the coordinate projections.
Let $F^n$ denote the $n$-fold composition of $F$ with itself.
We recursively define $f_{n+1}, g_{n+1}$ to be the maps in the following fiber product diagram where $\C_{F^0} := \D$.
\begin{center}
    \begin{tikzcd}[row sep=large, column sep=small]
        & & \C_{F^{n+1}} \arrow[dl, "f_{n+1}"'] \arrow[dr, "g_{n+1}"] & & \\
        & \C_{F^{n}}  \arrow[dr, "g_{n}"'] & & \C_{F^{n}} \arrow[dl, "f_n"] & \\
        & & \C_{F^{n-1}} & &
    \end{tikzcd}
\end{center}
If $1 \leq m \leq n$, define $f_{m,n} : \C_{F^n} \to \C_{F^{m-1}}$ to be the composition $f_{m,n} := f_m f_{m+1}\cdots f_n$; define $g_{m,n}$ similarly.
Note that the fiber product of $f_{m,n_1}$ and $g_{m,n_2}$ is $\C_{F^{n_1 + n_2}}$.
We say $F : \D \rightsquigarrow \D$ is \textbf{stable} if $\C_{F^n}$ is irreducible for all $n \geq 0$.

If $F, F': \D \rightsquigarrow \D$ are correspondences, with projections $f_1, g_1 : \C_F \to \D$ and $f_1', g_1' : \C_{F'} \to \D$, then we say $F$ and $F'$ are \textbf{isomorphic} and write $F \cong F'$ if there is an automorphism $\mu_1 : \D \to \D$ and an isomorphism $\mu_2 : \C_{F'} \to \C_F$ such that $f_1 = \mu_1 f_1' \mu_2\inv$ and $g_1 = \mu_1 g_1' \mu_2\inv$.
Note that $F \cong F'$ implies that $F^n \cong {F'}^n$ for all $n \geq 0$.

If $F : \D \rightsquigarrow \D$ is a correspondence and $p \in \D$ is a point, then we let
\[
    F^n(p) := \{q \in \D : \exists\, r \in \C_{F^n}, f_{1,n}(r) = p, g_{1,n}(r) = q\}
\]
denote the set of all possible $n$th iterates of $p$ under $F$.

\begin{example}
    Consider the correspondence $F : \PP^1 \rightsquigarrow \PP^1$ defined by the projective closure of $y^2 = x^3 - 1$.
    We may view $F$ as the multi-valued algebraic function 
    \[
        F(x) := \pm \sqrt{x^3 - 1}.
    \]
    From this point of view, it appears that the degree of $F(x)$ should be $3/2$, which is how we interpret the $(2,3)$ degree of $F$.
    In general we may view the dynamics of correspondences as the iteration of generalized algebraic functions.
\end{example}

\subsection{Genus one correspondences}

In this section we analyze correspondences $F$ defined by maps between genus one curves.
The main result, Theorem \ref{thm: genus one correspondence classification}, provides simple criteria for $\C_{F^n}$ to be irreducible with genus one for all $n \geq 0$.
We use this result in our classification of exceptional correspondences in Section \ref{sec: exceptional}.

If $\O$ is an integral domain with field of fractions $K$, recall that the \textbf{inverse} of a fractional ideal $J \subseteq K$ is defined to be the fractional ideal $J\inv := \{\alpha \in K : \alpha J \subseteq \O\}$.
We say that $J$ is invertible if $J J\inv = \O$; if $J$ is not invertible, then $J J \inv$ is a proper ideal of $\O$.

\begin{lemma}
\label{lemma: invertible ideal identity}
    Let $\O$ be an integral domain.
    If $I, J \subseteq \O$ are ideals, then
    \[
        (I + J)(I \cap J) = IJ
    \]
    if and only if $I + J$ is invertible.
\end{lemma}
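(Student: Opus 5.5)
The plan is to split the identity into two inclusions: one holds for all ideals, and the other is where invertibility of $I+J$ enters.

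\textbf{The inclusion that always holds.} Distributing gives
\[
    (I+J)(I\cap J) = I(I\cap J) + J(I\cap J).
\]
We have $I(I\cap J)\subseteq IJ$, and likewise $J(I\cap J)\subseteq JI = IJ$. Hence $(I+J)(I\cap J)\subseteq IJ$ for all ideals $I, J$, with no hypotheses.

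\textbf{The reverse inclusion when $I+J$ is invertible.} Set $L := (I+J)\inv$. The key step is to show that $IJL \subseteq I\cap J$.
\begin{enumerate}
    \item Let $x \in L$, $i \in I$ and $j \in J$. Since $x(I+J)\subseteq \O$, we have both $xI \subseteq \O$ and $xJ\subseteq \O$.
    \item Then $xij = (xi)j \in \O J = J$.
    \item Also $xij = (xj)i \in I$.
    \item So $xij \in I\cap J$. By additivity this gives $IJL\subseteq I\cap J$.
\end{enumerate}
Now multiply by $I+J$ and use invertibility, $L(I+J)=\O$:
\[
    IJ = IJ\,L\,(I+J) \subseteq (I\cap J)(I+J).
\]
Combined with the first inclusion, this gives equality. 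So the ``if'' direction is routine.

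\textbf{The ``only if'' direction fails as stated.} This is the main obstacle: I expect it cannot be proved in this generality.
\begin{itemize}
    \item Take $I = J$. Then $(I+J)(I\cap J) = I^2 = IJ$ for every ideal $I$.
    \item So in $\O = K[x,y]$ with $I = J = (x,y)$, the identity holds, but $I+J = (x,y)$ is not invertible.
    \item The degenerate case $I=0$ gives a similar counterexample, since both sides are then $0$ for any $J$.
\end{itemize}
My plan is therefore to prove the ``if'' direction as above and note that the converse requires extra hypotheses; the later applications in the section presumably need only the ``if'' direction. If a converse is genuinely needed, I would first try to recover it under an additional assumption making $I\cap J$ cancellable. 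For instance, suppose $I\cap J$ is itself invertible; this holds when $\O$ is a Dedekind domain, such as the endomorphism ring $\ZZ[\omega]$ or $\ZZ[i]$ relevant here. Multiplying $(I+J)(I\cap J) = IJ$ by $(I\cap J)\inv$ then expresses $I+J$ as a product of fractional ideals. Deriving invertibility of $I+J$ from that would still require showing $IJ$ is invertible, so that hypothesis too would have to be stated explicitly.
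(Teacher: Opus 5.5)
Your ``if'' direction is correct and is essentially the paper's argument. The paper uses the same elementwise computation to get $IJ(I+J)\inv\subseteq I\cap J$, notes that $(I+J)(I\cap J)\subseteq IJ$ holds unconditionally, and finishes by multiplying by $I+J$.

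You are also right that the ``only if'' direction is false as stated, and this exposes an error in the paper's proof. The paper claims $I\cap J = IJ(I+J)\inv$ unconditionally, deducing $I\cap J\subseteq IJ(I+J)\inv$ from $(I+J)(I\cap J)\subseteq IJ$. That inclusion only places $I\cap J$ inside the colon $\{\alpha : \alpha(I+J)\subseteq IJ\}$, which can be strictly larger than $IJ(I+J)\inv$ when $I+J$ is not invertible. Your example shows this: for $I=J=(x,y)\subseteq K[x,y]$ one has $(x,y)\inv=K[x,y]$, so $IJ(I+J)\inv=(x,y)^2\neq(x,y)=I\cap J$.

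One correction to your plan: the converse cannot simply be dropped. The proof of Theorem \ref{thm: genus one correspondence classification}(2) uses it to show that if $\alpha\O_1+\beta\O_1$ is not invertible then $(\alpha\O_1+\beta\O_1)(\alpha\O_1\cap\beta\O_1)\neq\alpha\beta\O_1$, and hence $\C_{F^3}$ is reducible. The repair is simpler than the one you sketch, because you only need $IJ$ to be invertible, not $I\cap J$. If $(I+J)(I\cap J)=IJ$ and $IJ$ is invertible, then $(I+J)\cdot(I\cap J)(IJ)\inv=\O$, so $I+J$ is invertible. In the application, $I=\alpha\O_1$ and $J=\beta\O_1$ are nonzero principal, so $IJ=\alpha\beta\O_1$ is automatically invertible and the lemma holds in exactly the form the paper needs.

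Your appeal to Dedekind domains misses the point there. $\O_1$ is an arbitrary, possibly non-maximal, order, which is precisely the setting where a non-invertible $\alpha\O_1+\beta\O_1$ can occur. In a Dedekind domain the invertibility condition would be vacuous.
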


\begin{proof}
    We first show that the following identity holds unconditionally
    \begin{equation}
    \label{eqn: intersection product}
        I \cap J = IJ(I + J)\inv.
    \end{equation}
    Suppose $a \in I \cap J$.
    If $b \in I, c \in J$, then $ab, ac \in IJ$ and $a(b + c) \in IJ$.
    Hence $(I + J)(I \cap J) \subseteq IJ$ which implies that $I \cap J \subseteq IJ(I + J)\inv$.
    Next suppose $a \in I, b \in J, c \in (I + J)\inv$.
    Then $a, b \in I + J$, so $ac, bc \in \O$.
    Hence $abc = a(bc) = b(ac) \in I \cap J$.
    Thus $IJ(I + J)\inv \subseteq I \cap J$, completing the proof of \eqref{eqn: intersection product}.

    Multiplying both sides of \eqref{eqn: intersection product} by $I +J$ we get
    \[
        (I + J)(I \cap J) = IJ(I + J)\inv (I + J).
    \]
    The right hand side equals $IJ$ if and only if $I + J$ is invertible.
\end{proof}

\begin{lemma}
    \label{lemma: gen 1 irred criteria}
    Let $\Lambda_i \subseteq \CC$ be lattices and let $\E_i := \CC/\Lambda_i$ be elliptic curves for $i = 0, 1$.
    Let $f_1, g_1 : \E_1 \to \E_0$ be finite maps which lift to $\alpha x + \gamma$ and $\beta x + \delta$ on $\CC$.
    Then the fiber product of $f_1$ and $g_1$ is irreducible and has genus one if and only if $\alpha \Lambda_1 + \beta \Lambda_1 = \Lambda_0$.
\end{lemma}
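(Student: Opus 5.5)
The plan is to reduce to isogenies and then identify the fiber product with the kernel of a homomorphism of complex tori, whose connected components can be counted via lattices.

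First I would remove the translations. Since $f_1$ is surjective, we can write $f_1 = \phi \circ \tau$, where $\phi$ is the isogeny lifting to $\alpha x$ and $\tau$ is a translation of $\E_1$ by some $\gamma'$ with $\alpha\gamma' \equiv \gamma$. Concretely, $\tau_\gamma\phi = \phi\tau_{\gamma'}$. Precomposing with the automorphism $\tau$ of $\E_1$ gives an isomorphism between the fiber product of $f_1, g_1$ and the fiber product of $\phi, g_1$. Doing the same for $g_1$, we may assume $\gamma = \delta = 0$, so $f_1$ and $g_1$ are the isogenies $x \mapsto \alpha x$ and $x \mapsto \beta x$ modulo the lattices. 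This changes neither the number of components nor their genera, and it leaves $\alpha, \beta$ unchanged.

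Next, the (unnormalized) fiber product is
\[
    \B = \{(x,y) \in \E_1 \times \E_1 : \alpha x = \beta y\} = \ker \Phi, \qquad \Phi(x,y) := \alpha x - \beta y,
\]
where $\Phi : \E_1\times\E_1 \to \E_0$ is a surjective homomorphism of complex tori. Being a closed subgroup of a complex torus of dimension one, $\B$ is smooth, so it coincides with its normalization. Its identity component $\B^0$ is a one-dimensional complex subtorus, hence a genus one curve. Every other component is a translate of $\B^0$ and so also has genus one. Therefore the fiber product always has all components of genus one, and the statement reduces to: $\B$ is connected if and only if $\alpha\Lambda_1 + \beta\Lambda_1 = \Lambda_0$.

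To count components, lift $\Phi$ to the linear map $L(z,w) = \alpha z - \beta w$ on $\CC^2$, and let $V = \ker L$, a complex line. Then $\B = L\inv(\Lambda_0)/\Lambda_1^2$ and $\B^0 = (V + \Lambda_1^2)/\Lambda_1^2$. Since $L$ is surjective with kernel $V$, it induces an isomorphism
\[
    \B/\B^0 \cong L\inv(\Lambda_0)/(V + \Lambda_1^2) \xrightarrow{\ \sim\ } \Lambda_0 / L(\Lambda_1^2) = \Lambda_0/(\alpha\Lambda_1 + \beta\Lambda_1).
\]
This quotient is finite because $\alpha\Lambda_1$ already has finite index in $\Lambda_0$. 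So the number of components equals $[\Lambda_0 : \alpha\Lambda_1+\beta\Lambda_1]$, and irreducibility holds exactly when this index is one, as claimed.

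The main point requiring care is this identification of $\B/\B^0$ with the lattice cokernel. One must check that $(V+\Lambda_1^2)/\Lambda_1^2$ is closed and connected, and that it is exactly the identity component. This uses the fact that $V \cap \Lambda_1^2$ has rank two, which follows from the finiteness of $\Phi$ restricted to each factor. An alternative check is a degree count: the projection $\B \to \E_1$ has total degree $\deg g_1 = [\Lambda_0:\beta\Lambda_1]$, while $\B^0 \to \E_1$ has degree $[\alpha\Lambda_1+\beta\Lambda_1 : \beta\Lambda_1]$, and the ratio of these gives the same index.
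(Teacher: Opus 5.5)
Your proof is correct, but your main line differs from the paper's. The paper's proof is exactly your closing ``alternative check''. It takes $\CC/\Lambda_2$ with $\Lambda_2=\alpha\Lambda_1\cap\beta\Lambda_1$ as the identity component and notes that its projection to $\E_1$ has degree $[\alpha\Lambda_1:\Lambda_2]=[\alpha\Lambda_1+\beta\Lambda_1:\beta\Lambda_1]$ by the second isomorphism theorem. It then compares this with $\deg g_1=[\Lambda_0:\beta\Lambda_1]=[\Lambda_0:\alpha\Lambda_1+\beta\Lambda_1]\,[\alpha\Lambda_1+\beta\Lambda_1:\beta\Lambda_1]$, so irreducibility holds exactly when the index is $1$.

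Your primary argument instead realizes the fiber product as $\ker\Phi$. You compute its component group as the cokernel $\Lambda_0/L(\Lambda_1^2)=\Lambda_0/(\alpha\Lambda_1+\beta\Lambda_1)$, in effect the snake lemma applied to the uniformizations $0\to\Lambda\to\CC\to\E\to 0$. So you never have to compute the degree of a single component over $\E_1$. Your check that $\B^0=(V+\Lambda_1^2)/\Lambda_1^2$ also goes through: $V+\Lambda_1^2=L^{-1}(\alpha\Lambda_1+\beta\Lambda_1)$ is a union of the lines making up $L^{-1}(\Lambda_0)$, and its image is the connected image of $V$.

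What your route buys is that it justifies facts the paper only asserts:
\begin{enumerate}
\item You remove the translations $\gamma,\delta$ explicitly. The paper's ``identity component'' tacitly assumes this, since the fiber product need not contain the origin when $\gamma\not\equiv\delta \bmod \Lambda_0$.
\item You verify smoothness, so no normalization is needed.
\item You show every component is a translate of the genus one curve $\B^0$, which is where the ``genus one'' half of the statement comes from.
\end{enumerate}
The paper's route is shorter and directly produces the lattice $\Lambda_2=\alpha\Lambda_1\cap\beta\Lambda_1$, with projections induced by $\alpha^{-1}\Lambda_2,\beta^{-1}\Lambda_2\subseteq\Lambda_1$. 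That is the form reused in the induction proving Theorem~\ref{thm: genus one correspondence classification}. You recover the same lattice as $V\cap\Lambda_1^2\cong(\alpha\beta)^{-1}\Lambda_2$.
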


\begin{proof}
    Let $\C$ denote the (potentially reducible) fiber product of $f_1$ and $g_1$.
    If $\Lambda_2 := \alpha \Lambda_1 \cap \beta \Lambda_1 \subseteq \Lambda_0$, then $\E_2 := \CC/\Lambda_2$ corresponds to the identity component of $\C$ and all the irreducible components are isomorphic to $\E_2$.
    We have $[\alpha\Lambda_1: \Lambda_2] = [\alpha \Lambda_1 + \beta \Lambda_1 : \beta\Lambda_1]$ by the second group isomorphism theorem.
    Then $\C$ is irreducible if and only if 
    \[
        [\alpha\Lambda_1: \Lambda_2] = [\Lambda_0 : \beta \Lambda_1] = [\Lambda_0 : \alpha \Lambda_1 + \beta\Lambda_1][\alpha \Lambda_1 + \beta \Lambda_1 : \beta\Lambda_1]
        = [\Lambda_0 : \alpha \Lambda_1 + \beta\Lambda_1][\alpha\Lambda_1 : \Lambda_2].
    \]
    Hence $\C$ is irreducible if and only if $\alpha \Lambda_1 + \beta\Lambda_1 = \Lambda_0$.
\end{proof}

\begin{lemma}
\label{lemma: invertible distributes over intersection}
    Let $\O$ be a Noetherian domain and let $J$ be a fractional $\O$-ideal.
    Then $(I_1 \cap I_2)J = I_1J \cap I_2J$ for all fractional $\O$-ideals $I_1, I_2$ if and only if $J$ is invertible.
\end{lemma}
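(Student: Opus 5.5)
Both directions can be handled separately, with $K$ the fraction field of $\O$ and fractional ideals taken to be nonzero.

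\emph{Invertible implies distributive.} Suppose $JJ\inv = \O$. The containment $(I_1 \cap I_2)J \subseteq I_1J \cap I_2J$ holds for any $J$, since an element $aj$ with $a \in I_1\cap I_2$ and $j\in J$ lies in both $I_1J$ and $I_2J$. For the reverse, let $x \in I_1J \cap I_2J$. Then
\[
    xJ\inv \subseteq I_1JJ\inv \cap I_2JJ\inv = I_1 \cap I_2,
\]
so $x\O = xJ\inv J \subseteq (I_1 \cap I_2)J$. This direction is formal and does not need the Noetherian hypothesis.

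\emph{Distributive implies invertible.} Assume distributivity for all $I_1, I_2$. First clear denominators: choose $0 \neq c \in \O$ with $cJ \subseteq \O$. Distributivity and invertibility are unchanged when $J$ is replaced by $cJ$, so we may assume $J \subseteq \O$ is a nonzero ideal. The idea is to exploit Lemma \ref{lemma: invertible ideal identity}, which characterizes invertibility of a sum $I+J'$ by the identity $(I+J')(I\cap J') = IJ'$. Since $\O$ is Noetherian, $J$ is finitely generated, so it suffices to prove that $(a)+(b)$ is invertible for every pair $a,b\in J$. Indeed, a finitely generated ideal is invertible exactly when it is locally principal, and locally principal can be checked two generators at a time by induction: if $(a,b)$ is locally principal, then locally $J$ is generated by one fewer element.

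To see that $(a,b)$ is invertible, apply distributivity with $I_1 = (a)J\inv{}$-type choices, or more directly use the cleaner route of localizing. Distributivity should pass to localizations $\O_\mathfrak{m}$ because fractional ideals of $\O_\mathfrak{m}$ are extended from $\O$ and localization commutes with finite sums, intersections, and products of finitely generated ideals, which is where Noetherian is used. Over a local Noetherian domain, take $J = (a_1,\dots,a_n)$ minimally generated and apply distributivity with $I_1 = (a_2)$ and $I_2 = (a_1)$, aiming to show that $J$ must be principal.

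The \textbf{main obstacle} is this local step. I expect the right choice to be $I_1 = \O$ and $I_2 = (a_1/a_2)\O$, giving
\[
    (\O \cap (a_1/a_2)\O)J = J \cap (a_1/a_2)J,
\]
and then to compare with $J = (a_1,a_2)$, using Nakayama to force one generator to divide the other. If this particular choice fails, the fallback is to use the characterization $JJ\inv = \O$ directly. Distributivity applied to the colon ideals $I_k = a_k\inv \O$ gives $\bigl(\bigcap_k a_k\inv\O\bigr)J = \bigcap_k a_k\inv J$. Here $\bigcap_k a_k\inv\O = J\inv$, and the right-hand side contains $1$ because $a_k \in J$ for every $k$. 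Hence $1 \in J\inv J$, so $J\inv J = \O$.

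This fallback in fact works globally: it needs only that $J$ is finitely generated (Noetherian) and that distributivity extends to finite intersections, which follows by induction. So I would present that argument as the proof.
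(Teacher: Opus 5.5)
Your final argument is correct, and it takes a different route from the paper. The paper proves the lemma in two sentences by citing general facts: distributivity of $J$ over intersections of fractional ideals is equivalent to $J$ being flat, and over a Noetherian domain a flat fractional ideal is invertible. You argue directly instead. For invertible $J$, you use $xJ\inv \subseteq I_1JJ\inv \cap I_2JJ\inv = I_1 \cap I_2$ together with $x \in xJ\inv J$ to get the missing containment; this half does not need the Noetherian hypothesis. For the converse, you take nonzero generators $a_1,\dots,a_n$ of $J$ and set $I_k = a_k\inv\O$. Then $\bigcap_k I_k = J\inv$, while $1 \in a_k\inv J$ for every $k$, so distributivity forces $1 \in J\inv J$ and hence $J\inv J = \O$.

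Your route is self-contained and uses no homological algebra. It also makes clear that the Noetherian hypothesis enters only to make $J$ finitely generated, so that $J\inv$ is a \emph{finite} intersection of principal fractional ideals. The paper's route is shorter on the page and ties the statement to flatness, but it relies on two nontrivial facts that it does not prove.

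When you write it up, delete the exploratory middle paragraph: the localization plan, the two-generator reduction, and the Nakayama guess with $I_2 = (a_1/a_2)\O$. None of it is needed and none of it is finished. Also add two small points. First, the two-ideal hypothesis extends to $n$ ideals by induction, because a finite intersection of nonzero fractional ideals in a domain is again a nonzero fractional ideal. Second, discard any zero generators before forming the $a_k\inv\O$.
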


\begin{proof}
    In general, $(I_1 \cap I_2)J = I_1J \cap I_2J$ holds for all fractional ideals $I_1, I_2$ if and only if $J$ is a flat $\O$-module.
    If $\O$ is a Noetherian domain, then $J$ flat is equivalent to $J$ invertible.
\end{proof}

The \textbf{multiplier ring} $\O_J$ of $J$ is defined to be
\(
    \O_J := \{\alpha \in K : \alpha J \subseteq J\}.
\)

\begin{lemma}
\label{lemma: proper iff invertible}
    Let $\O$ be an order in a quadratic extension $K/\QQ$ and let $\Lambda \subseteq K$ be a fractional $\O$-module.
    Then $\Lambda$ is invertible if and only if $\O$ is the multiplier ring of $\Lambda$.
\end{lemma}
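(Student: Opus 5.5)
Throughout, write $\O_\Lambda$ for the multiplier ring of $\Lambda$. Since $\Lambda$ is a fractional $\O$-module, $\O \subseteq \O_\Lambda$. The forward direction is formal. Suppose $\Lambda\Lambda\inv = \O$ and $\alpha \in \O_\Lambda$. Then
\[
    \alpha\O = \alpha\Lambda\Lambda\inv \subseteq \Lambda\Lambda\inv = \O,
\]
so $\alpha \in \O$. Hence $\O_\Lambda = \O$.

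For the converse, assume $\O_\Lambda = \O$. I would use the standard quadratic-order argument. Scaling $\Lambda$ by a nonzero element of $K$ changes neither invertibility nor the multiplier ring, so we may assume $\Lambda = \ZZ + \tau\ZZ$ with $\tau \in K\setminus\QQ$. Let $a\tau^2 + b\tau + c = 0$ be the minimal equation of $\tau$ with $a,b,c \in \ZZ$, $a > 0$ and $\gcd(a,b,c) = 1$. The first step is to compute $\O_\Lambda$ directly: $\alpha\Lambda \subseteq \Lambda$ holds exactly when $\alpha \in \Lambda$ and $\alpha\tau \in \Lambda$. Writing $\alpha = x + y\tau$ and using $a\tau^2 = -b\tau - c$, this forces $a \mid y$. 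The coprimality of $a,b,c$ then gives
\[
    \O_\Lambda = \ZZ[a\tau],
\]
the order of discriminant $b^2 - 4ac$.

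Let $\bar\tau$ be the Galois conjugate of $\tau$ and set $\bar\Lambda = \ZZ + \bar\tau\ZZ$. The key computation, which is the main obstacle, is
\[
    \Lambda\bar\Lambda = \tfrac{1}{a}\,\O_\Lambda .
\]
The product $\Lambda\bar\Lambda$ is generated over $\ZZ$ by $1$, $\tau$, $\bar\tau$, and $\tau\bar\tau$. Here $\tau + \bar\tau = -b/a$, $\tau\bar\tau = c/a$, and $a\tau$ generates $\O_\Lambda$ together with $1$. One checks that $a\Lambda\bar\Lambda$ is the $\ZZ$-span of $a$, $a\tau$, $a\bar\tau = -b - a\tau$, and $c$. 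Because $\gcd(a,b,c) = 1$, this span equals $\ZZ + a\tau\ZZ = \O_\Lambda$. The gcd condition is essential here, and it is also the step where care is needed over which order one is working in.

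Now $\O_\Lambda = \O$ by hypothesis, so $\Lambda \cdot \bigl(a\bar\Lambda\bigr) = \O$. Hence $\Lambda$ is invertible, with $\Lambda\inv = a\bar\Lambda$, and this completes the converse. As an alternative for the converse, one could localize at each prime of $\O$ and use the fact that a proper ideal is locally principal. The explicit conjugate computation is shorter, however, and stays entirely within quadratic arithmetic.
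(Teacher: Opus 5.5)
Your proposal is correct and follows the paper's proof essentially step for step. The forward direction is the same formal argument $\alpha\O = \alpha\Lambda\Lambda\inv \subseteq \O$. For the converse, you likewise normalize to $\Lambda = \ZZ + \tau\ZZ$, use $\gcd(a,b,c)=1$ to get $\O_\Lambda = \ZZ[a\tau]$, and then compute $\Lambda\bar\Lambda = \frac{1}{a}\O$ with the conjugate lattice.
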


\begin{proof}
    Let $\O_\Lambda$ be the multiplier ring of $\Lambda$.
    Then clearly $\O \subseteq \O_\Lambda$.
    The forward direction holds more generally:
    Suppose that $\Lambda$ is invertible and let $\alpha \in \O_\Lambda$ so that $\alpha \Lambda \subseteq \Lambda$.
    Since $\Lambda$ is invertible, we have
    \[
        \alpha \O = \alpha \Lambda \Lambda\inv \subseteq \Lambda \Lambda\inv = \O.
    \]
    Thus $\alpha \in \O$ and therefore $\O = \O_\Lambda$.

    Next suppose that $\O = \O_\Lambda$.
    Note that $K$ quadratic implies that $\Lambda \subseteq K$ is a rank 2 $\ZZ$-module.
    Replacing $\Lambda$ with a scalar multiple, we may assume that $\Lambda = \langle 1, \tau\rangle$ as a $\ZZ$-module.
    Then $\tau \in K$ is the root of an irreducible quadratic polynomial $ax^2 + bx + c \in \ZZ[x]$ where $\gcd(a,b,c) = 1$; in particular, $K = \QQ(\tau)$.
    Suppose that $\alpha := d + e\tau \in \O_\Lambda$.
    Then $\alpha = \alpha1 \in \Lambda$, which implies that $d, e \in \ZZ$, and
    \[
        \alpha \tau = d\tau + e\tau^2 = d\tau -\frac{e(c + b\tau)}{a} = -\frac{ec}{a} + \Big(d - \frac{eb}{a}\Big)\tau \in \Lambda.
    \]
    Hence $a \mid eb, ec$, which implies that $e \mid e\gcd(b,c)$.
    Since $1 = \gcd(a,b,c) = \gcd(a,\gcd(b,c))$, we conclude that $a$ divides $e$.
    Thus $\alpha = d + ae'\tau$ where $d, e' \in \ZZ$.
    Therefore $\O = \O_\Lambda = \ZZ[a\tau]$.
    Let $\bar\tau$ denote the Galois conjugate of $\tau$ and define $\overline{\Lambda} := \langle 1, \bar \tau\rangle$.
    Observe that
    \[
        \Lambda\overline\Lambda = \langle 1, \tau, \bar \tau, \tau\bar\tau\rangle = \langle 1, \frac{b}{a}, \frac{c}{a},\tau\rangle = \frac{1}{a}\langle a, b, c, a\tau\rangle.
    \]
    Since $\gcd(a,b,c) = 1$, this simplifies to
    \[
        \Lambda\overline\Lambda = \frac{1}{a}\langle 1, a\tau\rangle = \frac{1}{a}\O.
    \]
    Thus $\Lambda$ is invertible.
\end{proof}

\begin{theorem}
\label{thm: genus one correspondence classification}
    Let $\Lambda_i \subseteq \CC$ be lattices and let $\E_i := \CC/\Lambda_i$ be elliptic curves for $i = 0, 1$.
    Suppose that $F : \E_0 \rightsquigarrow \E_0$ is a correspondence such that $\E_1 := \C_F$ is irreducible with genus one.
    Suppose that $f_1, g_1 : \E_1 \to \E_0$ lift to $\alpha x + \gamma$ and $\beta x + \delta$ on $\CC$.
    \begin{enumerate}
        \item Suppose $\E_0$ does not have complex multiplication.
        \begin{enumerate}
            \item If $\E_1 \not\cong \E_0$, then $\C_{F^n}$ is reducible for all $n\geq 2$.
            
            \item If $\E_1 \cong \E_0$, then we may assume that $\Lambda_0 = \Lambda_1$ and $\alpha, \beta \in \ZZ$.
            Then $F$ is stable and $\C_{F^n}$ has genus one for all $n \geq 0$ if and only if $\gcd(\alpha, \beta) = 1$.
        \end{enumerate}
        \item Suppose $\E_i$ has complex multiplication by an order $\O_i$ in an imaginary quadratic field $K$.
        Then $\C_{F^n}$ is irreducible of genus one for all $n \geq 0$ if and only if $\alpha \Lambda_1 + \beta \Lambda_1 = \Lambda_0$ and $\alpha \O_1 + \beta \O_1$ is an invertible $\O_1$-fractional ideal.
        In particular, $F$ is stable if and only if $\C_{F^3}$ is irreducible.
    \end{enumerate}
\end{theorem}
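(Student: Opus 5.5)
The plan is to describe $\C_{F^n}$ explicitly as a quotient $\CC/\Lambda_n$ and reduce irreducibility at each stage to Lemma \ref{lemma: gen 1 irred criteria}.

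\textbf{Lattice description of the iterates.} I will prove by induction on $n$ that, whenever $\C_{F^{n}}$ is irreducible of genus one, it is $\CC/\Lambda_{n}$ with
\[
    \Lambda_n := \{x \in \CC : \alpha^{n-i}\beta^{i} x \in \Lambda_1 \text{ suitably}\}.
\]
More precisely, I will show that $\Lambda_n = \bigcap_{i+j = n-1}\alpha^i\beta^j \Lambda_1$ up to a rescaling, and that $f_{1,n}, g_{1,n}$ lift to $\alpha^n x + \cdots$ and $\beta^n x + \cdots$. The component through which the identities pass is always $\CC/(\text{intersection of lattices})$, as in the proof of Lemma \ref{lemma: gen 1 irred criteria}. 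Applying that lemma to the fiber product of $g_n$ and $f_n$ (both maps $\C_{F^n}\to\C_{F^{n-1}}$), irreducibility of $\C_{F^{n+1}}$ becomes the lattice identity
\[
    \alpha\Lambda_n + \beta\Lambda_n = \Lambda_{n-1},
\]
where the multipliers of $f_n, g_n$ relative to the identifications are again $\alpha,\beta$ after normalizing. For $n=1$ this is exactly the hypothesis $\alpha\Lambda_1+\beta\Lambda_1=\Lambda_0$.

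\textbf{Non-CM case (1).} If $\E_0$ has no CM, then any isogeny $\E_1\to\E_0$ has multiplier $\alpha$ with $\alpha\Lambda_1\subseteq\Lambda_0$, and likewise for $\beta$. Hence $\beta/\alpha$ maps $\alpha\Lambda_1$ into $\Lambda_0$ projectively, giving an endomorphism-type relation. If $\E_1\not\cong\E_0$, I will show that $\alpha\Lambda_1$ and $\beta\Lambda_1$ are commensurable but distinct. Then $\Lambda_2=\alpha\Lambda_1\cap\beta\Lambda_1$, and the second-step condition $\alpha\Lambda_2+\beta\Lambda_2=\Lambda_1$ would force $\beta/\alpha$ to be a multiplier of some lattice that is not in $\QQ$. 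That is impossible without CM, so $\C_{F^2}$ is reducible; reducibility then propagates to all $n \geq 2$, since $\C_{F^n}$ maps onto $\C_{F^2}$ fiberwise. If $\E_1\cong\E_0$, I take $\Lambda_1=\Lambda_0=\Lambda$ and $\alpha,\beta\in\ZZ$. All lattices are then of the form $m\Lambda$, and the condition reads $\alpha\Lambda+\beta\Lambda=\gcd(\alpha,\beta)\Lambda=\Lambda$ at every stage, which gives the $\gcd$ criterion directly.

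\textbf{CM case (2).} Here I scale so that all lattices are fractional ideals in $K$ and $\alpha,\beta\in K$. The key algebraic step is to show that the identity
\[
    \alpha\Lambda_n+\beta\Lambda_n=\Lambda_{n-1}
\]
propagates from $n$ to $n+1$ exactly when $J:=\alpha\O_1+\beta\O_1$ is invertible. Writing $\Lambda_n$ as an intersection, I will use Lemma \ref{lemma: invertible distributes over intersection} to distribute $J$ over intersections, and Lemma \ref{lemma: invertible ideal identity}, with $I=\alpha\O$ and $J'=\beta\O$, to get $(\alpha\O+\beta\O)(\alpha\O\cap\beta\O)=\alpha\beta\O$. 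These two facts let me compute $\Lambda_{n+1}$ and verify the sum identity inductively. Lemma \ref{lemma: proper iff invertible} is what lets me pass between "multiplier ring equals $\O_1$" and invertibility. Conversely, if $J$ is not invertible, then $J\cdot(\alpha\O_1\cap\beta\O_1)\subsetneq\alpha\beta\O_1$, and I expect this to produce an index discrepancy already at $n=3$. That would yield both the "only if" direction and the claim that stability follows from irreducibility of $\C_{F^3}$.

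\textbf{Main obstacle.} The main difficulty is bookkeeping. I need to set up the correct identification of the multipliers of $f_n,g_n$ with $\alpha,\beta$ at every level: the translations $\gamma,\delta$ are harmless, but the lattices change. I also need to pin down why the failure of invertibility is detected exactly at the third iterate rather than the second. For the latter I would first try computing $\Lambda_2=\alpha\Lambda_1\cap\beta\Lambda_1$ and then checking the index $[\Lambda_1:\alpha\Lambda_2+\beta\Lambda_2]$ against $[J J^{-1}:\ldots]$.
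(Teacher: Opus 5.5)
Your argument for (1a) has a genuine gap. The statement requires $\C_{F^2}$ to be reducible, and by Lemma~\ref{lemma: gen 1 irred criteria} that is governed by the \emph{first}-stage identity $\alpha\Lambda_1+\beta\Lambda_1=\Lambda_0$. The identity $\alpha\Lambda_2+\beta\Lambda_2=\Lambda_1$ that you examine controls $\C_{F^3}$, so even its failure would not give reducibility of $\C_{F^2}$. (Also, with $\Lambda_2=\alpha\Lambda_1\cap\beta\Lambda_1$ unrescaled, $f_2,g_2$ have multipliers $\beta^{-1},\alpha^{-1}$, not $\alpha,\beta$.)

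Worse, the contradiction you anticipate does not exist. Without CM one has $\beta/\alpha\in\QQ$: pick $N\geq 1$ with $N\Lambda_0\subseteq\alpha\Lambda_1$; then $(N\beta/\alpha)\Lambda_0\subseteq\beta\Lambda_1\subseteq\Lambda_0$, so $N\beta/\alpha\in\mathrm{End}(\E_0)=\ZZ$. Write $\beta/\alpha=a/b$ in lowest terms. Use the normalization where $f_2,g_2$ again have multipliers $\alpha,\beta$, namely $\Lambda_2=\alpha^{-1}\Lambda_1\cap\beta^{-1}\Lambda_1$. Then $\Lambda_2=(b/\alpha)\Lambda_1$, and so $\alpha\Lambda_2+\beta\Lambda_2=b\Lambda_1+a\Lambda_1=\Lambda_1$. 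The second-stage identity therefore \emph{holds}.

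The obstruction sits one step earlier. We have $\alpha\Lambda_1+\beta\Lambda_1=(\alpha/b)(b\Lambda_1+a\Lambda_1)=(\alpha/b)\Lambda_1$, which is homothetic to $\Lambda_1$. It can therefore equal $\Lambda_0$ only if $\E_1\cong\E_0$. This is the lattice form of the paper's argument. There, $\mathrm{Hom}(\E_1,\E_0)$ is infinite cyclic, generated by an isogeny $\phi$ of degree at least $2$ when $\E_1\not\cong\E_0$. Hence $f_1$ and $g_1$ share $\phi$ (up to translation) as a common left factor, and their fiber product is reducible. With this in place, your propagation remark handles all $n\geq 2$.

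Your treatment of (1b) and (2) is essentially the paper's. You use the lattice description of the iterates, Lemma~\ref{lemma: gen 1 irred criteria} at each stage, and Lemmas~\ref{lemma: invertible distributes over intersection} and~\ref{lemma: invertible ideal identity} to propagate $\alpha\Lambda_n+\beta\Lambda_n=\Lambda_{n-1}$. You also correctly expect non-invertibility of $\alpha\O_1+\beta\O_1$ to show up at $\C_{F^3}$.

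The cleanest way to run the induction is the paper's one-step version rather than distributing $J$ over the full $n$-fold intersection. First check that the multiplier ring of $\Lambda_n$ stays equal to $\O_1$: the intersection gives $\O_n\subseteq\O_{n+1}$, and the sum identity gives the reverse. Then $\Lambda_n$ is invertible by Lemma~\ref{lemma: proper iff invertible}, so $\alpha^{-1}\Lambda_n\cap\beta^{-1}\Lambda_n=(\alpha^{-1}\O_1\cap\beta^{-1}\O_1)\Lambda_n$. The two-ideal identity $(\alpha\O_1+\beta\O_1)(\alpha\O_1\cap\beta\O_1)=\alpha\beta\O_1$ then suffices.
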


\begin{proof}
    Suppose that $\E_0$ does not have CM.
    (1a)
    If $\E_0 \not\cong \E_1$, then $\mathrm{Hom}(\E_1,\E_0)$, the isogenies from $\E_1$ to $\E_0$, is a cyclic $\ZZ$-module generated by an isogeny $\phi$ with degree at least 2.
    Thus $f_1$ and $g_1$ have a common left factor of $\phi$, which implies that the fiber product of $f_1$ and $g_1$ is reducible.

    (1b)
    If $\E_0 \cong \E_1$, then we may assume without loss of generality that $\Lambda := \Lambda_0 = \Lambda_1$ and $\E := \E_0 = \E_1$.
    Our assumption that $\E$ does not have CM implies that $\alpha, \beta \in \ZZ$.
    Lemma \ref{lemma: gen 1 irred criteria} implies that the fiber product of $f_1, g_1$ is irreducible if and only if $\alpha \Lambda + \beta \Lambda = \Lambda$, which is equivalent to $\gcd(\alpha, \beta) = 1$.
    Consider the maps $f_2', g_2' : \E \to \E$ which lift to $\tilde f_2 := \alpha x$ and $\tilde g_2 := \beta x + \frac{\delta - \gamma}{\alpha}$.
    Then
    \[
        \tilde f_1\tilde g_2' = \alpha(\beta x + \tfrac{\delta - \gamma}{\alpha}) + \gamma
        = \alpha\beta x + \delta = \beta(\alpha x) + \delta = \tilde g_1\tilde f_2',
    \]
    which implies that $f_1 g_2' = g_1 f_2'$.
    Thus $f_2', g_2' : \E \to \E$ factors through the fiber product of $f_1, g_1$.
    Since $\alpha, \beta$ are coprime, this map to the fiber product must be an isomorphism.
    Hence, without loss of generality, $\C_{F^2} = \E$ and $f_2 = f_2'$, $g_2 = g_2'$.
    It then follows by induction that $\C_{F^n}$ is irreducible with genus one for all $n\geq 0$.

    (2)
    We prove by induction on $n$ that if $\C_{F^n} = \E_n := \CC/\Lambda_n$ and $\C_{F^{n-1}} = \E_{n-1} := \CC/\Lambda_{n-1}$ are irreducible with genus one, if $\O_i$ the multiplier ring of $\Lambda_i$ is an order in an imaginary quadratic field $K$, and if $\alpha_n, \beta_n \in K$ are elements such that $f_n, g_n : \E_n \to \E_{n-1}$ are induced by $\alpha_n \Lambda_n, \beta_n \Lambda_n \subseteq \Lambda_{n-1}$, then
    \begin{enumerate}[label=(\roman*)]
        \item $\alpha_n \Lambda_n + \beta_n \Lambda_n = \Lambda_{n-1}$, and
        \item $\alpha_n \O_n + \beta_n \O_n$ is an invertible fractional $\O_n$-ideal.
    \end{enumerate}
    The base case is precisely our hypothesis.
    Now assume this holds for some $n \geq 1$.
    Since $\E_n$ and $\E_{n-1}$ are isogenous, $\O_n$ and $\O_{n-1}$ are orders in the same imaginary quadratic field $K$.
    The assumption that $\alpha_n \Lambda_n + \beta_n \Lambda_n = \Lambda_{n-1}$ implies $\O_n \subseteq \O_{n-1}$ and that the fiber product of $f_n$ and $g_n$ is irreducible by Lemma \ref{lemma: gen 1 irred criteria}.
    Let $\Lambda_{n+1} := \alpha_n \Lambda_n \cap \beta_n\Lambda_n$ and $\E_{n+1} := \CC/\Lambda_{n+1}$.
    Then $\E_{n+1}$ is the fiber product of $f_n$ and $g_n$.
    The projections $f_{n+1}, g_{n+1} : \E_{n+1} \to \E_n$ are induced by the inclusions $\alpha_n\inv \Lambda_{n+1}, \beta_n\inv \Lambda_{n+1} \subseteq \Lambda_n$.
    The definition of $\Lambda_{n+1}$ implies that $\O_n \subseteq \O_{n+1}$.
    Thus $\Lambda_n, \Lambda_{n+1}$ are fractional $\O_n$-ideals.
    Since $\O_n$ is, by definition, the multiplier ring of $\Lambda_n$, Lemma \ref{lemma: proper iff invertible} implies that $\Lambda_n$ is an invertible fractional $\O_n$-ideal.
    Let $\alpha_{n+1} := \alpha_n\inv$ and $\beta_{n+1} := \beta_n\inv$.
    Observe that
    \begin{align*}
        \alpha_{n+1} \Lambda_{n+1} + \beta_{n+1} \Lambda_{n+1} &= (\alpha_n\inv \O_n + \beta_n\inv \O_n)(\alpha_n\Lambda_n \cap \beta_n\Lambda_n)\\
        &= (\alpha_n\inv \O_n + \beta_n\inv \O_n)(\alpha_n\O_n \cap \beta_n\O_n)\Lambda_n\\
        &= (\alpha_n\beta_n)\inv\O_n(\alpha_n\O_n + \beta_n\O_n)(\alpha_n \O_n \cap \beta_n\O_n)\Lambda_n, 
    \end{align*}
    where the second equality is a consequence of Lemma \ref{lemma: invertible distributes over intersection}.
    Our assumption that $\alpha_n \O_n + \beta_n \O_n$ is invertible implies, by Lemma \ref{lemma: invertible ideal identity}, that 
    \[
        (\alpha_n\O_n + \beta_n\O_n)(\alpha_n \O_n \cap \beta_n\O_n) = \alpha_n\beta_n \O_n.
    \]
    Hence
    \[
        \alpha_{n+1} \Lambda_{n+1} + \beta_{n+1} \Lambda_{n+1} = \Lambda_n.
    \]
    Lemma \ref{lemma: gen 1 irred criteria} implies that $\C_{F^{n+1}}$, the fiber product of $f_n$ and $g_n$, is irreducible with genus one.
    Furthermore $\O_{n+1} \subseteq \O_n$, hence $\O_n = \O_{n+1}$.
    Then
    \[
        \alpha_{n+1}\O_{n+1} + \beta_{n+1}\O_{n+1}
        = \alpha_n\inv \O_n + \beta_n\inv \O_n
        = (\alpha_n \beta_n)\inv\O_n(\alpha_n \O_n + \beta_n \O_n)
    \]
    is an invertible fractional $\O_{n+1}$-ideal.
    That completes our induction and one direction of the assertion.

    If $\alpha \Lambda_1 + \beta \Lambda_1 \neq \Lambda_0$, then $\C_{F^2}$ is reducible by Lemma \ref{lemma: gen 1 irred criteria}.
    Suppose that $\alpha \Lambda_1 + \beta \Lambda_1 = \Lambda_0$ and $\alpha \O_1 + \beta \O_1$ is not invertible.
    The Lemma \ref{lemma: invertible ideal identity} implies that
    \[
        (\alpha \O_1 + \beta \O_1)(\alpha\O_1 \cap \beta\O_1) \neq \alpha\beta \O_1.
    \]
    Hence the $n = 1$ case of our induction above implies that
    \[
        \alpha\inv \Lambda_2 + \beta\inv \Lambda_2 \neq \Lambda_1,
    \]
    which implies that $\C_{F^3}$ is reducible by Lemma \ref{lemma: gen 1 irred criteria}.
    Therefore these two conditions are both necessary as well as sufficient.
    Note that our proof shows that $\alpha \Lambda_1 + \beta \Lambda_1 = \Lambda_0$ and $\alpha \O_1 + \beta \O_1$ invertible is equivalent to $\C_{F^3}$ being irreducible. 
    Hence $F$ is stable if and only if $\C_{F^3}$ is irreducible.
\end{proof}

    

\subsection{Exceptional correspondences}
\label{sec: exceptional}

We say a correspondence $F : \PP^1 \rightsquigarrow \PP^1$ with degree $(d,e)$ is \textbf{exceptional} if $\C_{F^2}$ is irreducible with genus zero, and up to replacing $F$ by its transpose, $2 \leq d \leq e$ and $f_{1,2}$ is low-genus.
In Proposition \ref{prop: 12 implies exceptional} we show that any correspondence $F : \PP^1 \rightsquigarrow \PP^1$ for which the 12th iterate $\C_{F^{12}}$ is irreducible with genus at most one must be exceptional.
The criteria for a correspondence to be exceptional is restrictive enough that we can make an essentially complete classification of all exceptional correspondences, which we do in Theorem \ref{thm: exceptional classification}.

The next lemma shows that if $\C_F$ is irreducible with genus one for a correspondence $F : \PP^1 \rightsquigarrow \PP^1$, then the second iterate is either reducible or has large genus.

\begin{lemma}
\label{lemma: gen 0 to gen 1 transition}
    Let $F : \PP^1 \rightsquigarrow \PP^1$ be a correspondence such that $\C_F$ is irreducible with genus one.
    Then either $\gen(\C_{F^2}) > 1$ or $\C_{F^2}$ is reducible.
\end{lemma}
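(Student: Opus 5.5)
We argue by contradiction. Assume $\C_{F^2}$ is irreducible with $\gen(\C_{F^2}) \leq 1$, and write $\E := \C_F$ with projections $f_1, g_1 : \E \to \PP^1$. Then $\C_{F^2}$ is the fiber product of $g_1$ and $f_1$, with projections $f_2, g_2 : \C_{F^2} \to \E$. Since $\C_{F^2}$ maps onto the genus one curve $\E$, Lemma \ref{lemma: RH cor} (1) gives $\gen(\C_{F^2}) = 1$. Lemma \ref{lemma: RH cor} (3) then shows that $f_2$ and $g_2$ are both unramified. Note also that $f_1, g_1$ have degree at least $2$, because $\E \not\cong \PP^1$.

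\textbf{Step 1: matching ramification.} Fix $x \in \PP^1$, and take $p \in g_1\inv(x)$ and $q \in f_1\inv(x)$. By Abhyankar's lemma (Lemma \ref{lemma: abhyankar}), every point $r$ over $(p,q)$ has $e_h(r) = \lcm(e_{g_1}(p), e_{f_1}(q))$. Hence the ramification of the two projections at $r$ is $\lcm/e_{g_1}(p)$ and $\lcm/e_{f_1}(q)$. Both projections are unramified, so $e_{g_1}(p) = e_{f_1}(q)$. Since $p$ and $q$ were arbitrary, there is a single integer $m(x)$ with $e_{f_1}(q) = e_{g_1}(p) = m(x)$ for all $q \in f_1\inv(x)$ and $p \in g_1\inv(x)$. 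In particular, $f_1$ and $g_1$ are both evenly ramified, with the same critical values and the same indices.

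\textbf{Step 2: Galois closures have genus one.} Because $f_1$ is evenly ramified, every conjugate of $f_1$ is evenly ramified with the same indices. By Lemma \ref{lemma: galois ramification}, the map $\overline{\C} \to \E$ from the Galois closure of $f_1$ is then unramified. Riemann--Hurwitz gives $\gen(\overline{\C}) = 1$, and likewise for $g_1$. Lemma \ref{lemma: canonical factorization} then yields factorizations $f_1 = \pi\psi$ and $g_1 = \pi'\psi'$, where $\pi : \E' \to \PP^1$ and $\pi' : \E'' \to \PP^1$ are cyclic quotients of genus one curves.

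\textbf{Step 3: the cyclic quotients coincide.} Since $\psi$ is an unramified map between genus one curves, $\pi$ has exactly the critical values $x$ with $m(x) > 1$, with indices $m(x)$. The same holds for $\pi'$. We want $\pi$ and $\pi'$ to be essentially the same map. The cleanest route is to view $\pi$ as the quotient of $\overline{\C}$ by its translation subgroup $N$. Its function field is then the unique cyclic extension of $K(x)$ of degree $\lcm_x m(x)$ that is totally ramified with index $m(x)$ over each branch point and unramified elsewhere. Concretely this is a Kummer extension $y^n = \prod (x - a_i)^{k_i}$, with $n$ and the exponents forced by the signature (Table \ref{table: genus 1 to 0 galois}). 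Hence $\pi' = \pi\mu$ for an isomorphism $\mu$. I expect this uniqueness to be the main obstacle: one must check that the Kummer exponents are determined up to the symmetries allowed. For the rigid signatures $(3,3,3)$, $(2,4,4)$ and $(2,3,6)$ this amounts to a choice of which branch point carries which exponent class, and I would do this case by case from Table \ref{table: genus 1 to 0 galois}.

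\textbf{Step 4: reducibility.} Granting Step 3, the maps $g_1$ and $f_1$ share the common left factor $\pi$, which is Galois of degree $n \geq 2$. The fiber product of $g_1$ and $f_1$ maps onto the fiber product of $\pi$ with itself. That fiber product splits into $n$ components, the graphs of the deck transformations. So $\C_{F^2}$ has at least $n \geq 2$ components, contradicting irreducibility. Alternatively, one can note that $f_1$ and $g_1$ have the same Galois closure and apply Corollary \ref{cor: fried}-type reasoning to reach the same conclusion.
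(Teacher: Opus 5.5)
Your argument follows the paper's proof: unramified $f_2,g_2$ force $f_1$ and $g_1$ to be evenly ramified with matching indices, which gives genus-one Galois closures, a cyclic quotient $\pi$ from Lemma \ref{lemma: canonical factorization}, and a common left factor contradicting irreducibility; your Kummer-theoretic check of Step 3 (which the paper only asserts) does go through, since in each of the four signatures the admissible exponent vectors generate one and the same cyclic subgroup (e.g.\ $(3,2)$ and $(3,4)$ modulo $6$ for $(2,3,6)$). Drop the closing ``alternatively'' remark, though: equal Galois closures alone do not force a reducible fiber product, and Corollary \ref{cor: fried} only gives the converse implication.
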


\begin{proof}
    Suppose $\C_{F^2}$ is irreducible.
    Riemann-Hurwitz implies that $\gen(\C_{F^2}) \geq 1$.
    Suppose that $\gen(\C_{F^2}) = 1$.
    Since $\C_F$ and $\C_{F^2}$ both have genus one, Riemann-Hurwitz implies that $f_2, g_2$ are both unramified.
    Then Abhyankar's lemma implies that $f_1$ and $g_1$ have essentially the same special ramification type:
    If $p$ is a critical value of $f_1$, then $m_{f_1}(p) \mid e_{g_1}(q)$ for all $q \in g_1\inv(p)$.
    By symmetry we get the same conclusion after swapping the roles of $f_1$ and $g_1$.
    Hence $f_1$ and $g_1$ have the same set of critical values and $m_{f_1}(p) = m_{g_1}(p)$ for all $p$.
    The fact that $f_1$ is evenly ramified over each of its critical values implies the same is true for all of its Galois conjugates; the Galois closure of $f_1$ is an irreducible component of the fiber product of these conjugates.
    Hence Abhyankar's lemma implies that the Galois closure of $f_1$ is unramified over $\C_F$.
    Thus the Galois closure of $f_1$ has genus one, and the same holds for $g_1$.
    Lemma \ref{lemma: canonical factorization} implies that $f_1$ has a left factor $\pi$, which is Galois and shares the same critical values and signature as $f_1$.
    Note that the critical values and signature of $\pi$ determine $\pi$ uniquely up to right composition with an isomorphism.
    Thus $\pi$ must be a common left factor of both $f_1$ and $g_1$.
    However, in that case, the fiber product of $f_1$ and $g_1$ is reducible.
    This contradiction implies that either $\gen(\C_{F^2}) > 1$ or $\C_{F^2}$ is reducible.
\end{proof}

\begin{proposition}
\label{prop: 12 implies exceptional}
    Let $F : \PP^1 \rightsquigarrow \PP^1$ be a correspondence with degree $(d,e)$ where $d, e \geq 2$.
    If $\C_{F^{12}}$ is irreducible and $\gen(\C_{F^{12}}) \leq 1$, then $F$ is exceptional.
\end{proposition}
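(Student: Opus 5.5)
The plan has two parts. First, use Lemma \ref{lemma: gen 0 to gen 1 transition} to show that $\C_F$ and $\C_{F^2}$ are irreducible of genus zero. Second, use the degree bound behind Corollary \ref{corollary: low genus criteria} to show that $f_{1,2}$ is low-genus.

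\textbf{Lower iterates inherit the hypotheses.} For $k \leq 12$ the composite projection $\C_{F^{12}} \to \C_{F^k}$ is finite and surjective onto every component of $\C_{F^k}$. Since $\C_{F^{12}}$ is irreducible, each $\C_{F^k}$ is irreducible. By Lemma \ref{lemma: RH cor}(1), each $\C_{F^k}$ has genus at most one.

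\textbf{The key trick.} The fiber product of $g_{1,k}$ and $f_{1,k}$ is $\C_{F^{2k}}$. So $\C_{F^{2k}}$ is the curve $\C_{(F^k)^2}$ of the correspondence $F^k : \PP^1 \rightsquigarrow \PP^1$, whose projections are $f_{1,k}$ and $g_{1,k}$. Suppose $\gen(\C_{F^k}) = 1$ with $2k \leq 12$. Applying Lemma \ref{lemma: gen 0 to gen 1 transition} to $F^k$ shows that $\C_{F^{2k}}$ is reducible or has genus $>1$. This contradicts the first step. Hence $\C_{F^k} \cong \PP^1$ for all $k \leq 6$, in particular for $k = 1, 2$. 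So $\C_{F^2}$ is irreducible of genus zero, as the definition of exceptional requires.

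\textbf{Low-genus via Theorem \ref{thm: bdd genus}.} Replacing $F$ by its transpose $F^*$ (whose iterates are transposes, so the hypotheses are preserved), assume $2 \leq d \leq e$. The fiber product of $f := f_{1,2}$, a map of degree $d^2$ from $\C_{F^2} \cong \PP^1$, and $g := g_{1,10}$, of degree $e^{10}$, is $\C_{F^{12}}$. By hypothesis this is irreducible of genus $\leq 1$.

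I cannot assert $\C_{F^{10}} \cong \PP^1$, since the doubling trick only reaches $k \leq 6$. So I would rerun the proof of Theorem \ref{thm: bdd genus} with $g$ a map from a curve of genus at most one rather than a rational function. The only change is in the Riemann--Hurwitz step \eqref{eqn: bound}: the total ramification of $g$ is at most $2\deg g$ instead of $2\deg g - 2$. Everything else in that proof (Abhyankar's lemma, and the bound $|V_{f'}| \leq 2\deg f$ for the pulled-back map $f'$) is unchanged. With $E := e^{10}$ and $D := d^2$, this gives
\[
    \sum_{p} 1 - \frac{1}{m_f(p)} \leq \frac{2E}{E - 2D}.
\]
The right side is $< 2 + \frac{1}{42}$ once $E > 170D$. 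This holds because $e^{10} \geq d^{10} = d^8 \cdot d^2 \geq 256\, d^2 > 170 d^2$.

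\textbf{Conclusion.} As in Corollary \ref{corollary: low genus criteria}(1), the sum is then at most $2$. Riemann--Hurwitz for the Galois closure, as in part (3) of that corollary, shows the Galois closure of $f_{1,2}$ has genus $\leq 1$, so $f_{1,2}$ is low-genus and $F$ is exceptional.

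\textbf{Main obstacle.} The delicate step is the one just described: $\C_{F^{10}}$ may have genus one, so Corollary \ref{corollary: low genus criteria} does not apply verbatim. One must check that the proof of Theorem \ref{thm: bdd genus} tolerates a genus one source for $g$, and that the exponent $12$ leaves enough room in the degree inequality. The split $12 = 2 + 10$ works precisely because $2^8 > 170$.
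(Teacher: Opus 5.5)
Your proof is correct and follows the paper's route. You realize $\C_{F^{12}}$ as the fiber product of $f_{1,2}$ and $g_{1,10}$, use the degree inequality to force $f_{1,2}$ to be low-genus, and apply Lemma \ref{lemma: gen 0 to gen 1 transition} to an iterate $F^k$ to rule out $\gen(\C_{F^2}) = 1$; the paper's final sentence uses this doubling implicitly, and you spell it out.

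The one real difference is to your credit. The paper applies Corollary \ref{corollary: low genus criteria} verbatim with $g = g_{1,10}$, but that corollary is stated for rational functions and nothing in the paper shows $\C_{F^{10}} \cong \PP^1$. Your rerun of Theorem \ref{thm: bdd genus} for a source of genus at most one gives the bound $2E/(E-2D)$ and needs only $e^{10} > 170d^2$, which holds since $2^8 > 170$. That closes exactly this gap.
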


\begin{proof}
    Suppose that $\C_{F^{12}}$ is irreducible with genus at most one.
    Up to taking a transpose, we may assume that $2 \leq d \leq e$.
    Note that $\C_{F^{12}}$ is the fiber product of $f_{1,2}$ and $g_{1,10}$.
    A calculation shows that $d^{10} - 170d^2 + 84 > 0$ for all $d \geq 2$
    Hence we have
    \[
        \deg g_{1,10} = e^{10} \geq d^{10}> 170d^2 - 84 = 170 \deg f_{1,2} - 84.
    \]
    Hence Corollary \ref{corollary: low genus criteria} implies that $f_{1,2}$ is low-genus.
    Since $\C_{F^{12}}$ is irreducible with genus at most one, the same is true for all $\C_{F^n}$ with $n \leq 12$.
    Thus Lemma \ref{lemma: gen 0 to gen 1 transition} implies that $\C_{F^2}$ must have genus 0, and we conclude that $F$ is exceptional.
\end{proof}

We are now in a position to prove our main result on the scarcity of $K$-rational points in orbits under correspondences for finitely generated fields $K$.

\begin{theorem}
\label{thm: main}
    Let $K$ be a finitely generated field of characteristic zero.
    Let $F : \PP^1 \rightsquigarrow \PP^1$ be a correspondence defined over $K$ such that $\C_{F^{12}}$ is irreducible.
    Then either $F$ is exceptional, or for all $n \geq 12$, there are finitely many $K$-rational points $p$ for which $F^{n}(p)$ contains a $K$-rational point.
\end{theorem}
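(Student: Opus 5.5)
We argue by contrapositive: assume $F$ is not exceptional and fix $n \geq 12$. We will show that only finitely many $p \in \PP^1(K)$ have $F^n(p) \cap \PP^1(K) \neq \emptyset$. The strategy is to turn rational orbit points into rational points on $\C_{F^n}$, and then apply Faltings's theorem to that curve.

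First we need the geometric input. Since $\C_{F^{12}}$ is irreducible, every iterate $\C_{F^m}$ with $m \leq 12$ is irreducible, because $\C_{F^{12}}$ maps finitely onto each of them. We cannot conclude that $\C_{F^n}$ is irreducible for $n > 12$, so we handle components one at a time.

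Suppose $p \in \PP^1(K)$ and $q \in F^n(p) \cap \PP^1(K)$. By definition there is a point $r \in \C_{F^n}$ with $f_{1,n}(r) = p$ and $g_{1,n}(r) = q$. The curve $\C_{F^n}$ is the normalization of the image curve $Z_n \subseteq \PP^1 \times \PP^1$. The point $(p,q)$ is a $K$-rational point of $Z_n$. The correspondence and all its iterates are defined over $K$, since fiber products and normalizations commute with the base field.

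\textbf{Step 1: rational points on components.} Consider the finitely many $K$-irreducible components $Z'$ of $Z_n$.
\begin{enumerate}
    \item If $Z'$ is not geometrically irreducible, its $K$-points lie in the intersection of its Galois-conjugate geometric components, which is a finite set.
    \item If $Z'$ is geometrically irreducible, its normalization $\widetilde{Z'}$ is a component of $\C_{F^n}$ over $\overline K$. The finitely many singular points of $Z'$ can be discarded, and away from them rational points of $Z'$ lift to rational points of $\widetilde{Z'}$.
\end{enumerate}
If $\gen(\widetilde{Z'}) \geq 2$, Faltings's theorem, in its version for finitely generated fields of characteristic zero (Faltings together with Grothendieck's specialization argument), shows that $\widetilde{Z'}(K)$ is finite. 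Hence only finitely many $p$ arise from $Z'$, since each $p$ is the first coordinate of a point of $Z'(K)$.

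\textbf{Step 2: no component has genus $\leq 1$.} It remains to show that when $F$ is not exceptional, every geometric component of $\C_{F^n}$ has genus at least $2$. This is the main obstacle.

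Suppose instead that some component $\B \subseteq \C_{F^n}$ has genus at most one. The maps $f_{n}, g_{n}, \ldots$ and the fiber product structure give finite maps from $\B$ down to the irreducible curves $\C_{F^m}$ for $m \leq 12$. In particular $\B$ maps finitely onto $\C_{F^{12}}$, because $\C_{F^{12}}$ is irreducible and receives a surjection from every component of $\C_{F^n}$ via $f_{1,n-12}$-type projections (iterated fiber products of surjective maps are surjective on each component).

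Lemma \ref{lemma: RH cor}(1) then gives $\gen(\C_{F^{12}}) \leq \gen(\B) \leq 1$. So $\C_{F^{12}}$ is irreducible of genus at most one. Since $\deg F$ has $d, e \geq 2$, Proposition \ref{prop: 12 implies exceptional} says that $F$ is exceptional, contradicting our assumption.

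\textbf{The degenerate case.} If $d = 1$ or $e = 1$, the statement of Theorem \ref{thm: main intro} excludes this, but here we must check it separately. If one degree is $1$, then $F$ or $F^*$ is a rational map. I would either use the degree hypothesis implicit in the notion of exceptional, or observe that the definition of exceptional requires $2 \leq d$, so this case would need a separate remark. I expect to follow the main-theorem convention that $d, e \geq 2$.

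With Step 2 established, every component falls under Step 1, and summing over the finitely many components gives the finiteness for each $n \geq 12$.
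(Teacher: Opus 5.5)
Your argument is essentially the paper's: rational orbit points give infinitely many rational points on $\C_{F^n}$, Faltings's theorem yields a component of genus at most one, Lemma \ref{lemma: RH cor}(1) pushes this down to $\C_{F^{12}}$ (the relevant projection is $f_{13,n}$, not an ``$f_{1,n-12}$-type'' map), and Proposition \ref{prop: 12 implies exceptional} finishes. Your Step 1 is more careful than the paper about singular points and non-geometrically-irreducible components, which is fine.

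On the degenerate case, don't hedge: it is a genuine counterexample, not something to check separately. If $\C_F$ is the graph of a rational function, then $F^n(p)$ is $K$-rational for every $K$-rational $p$, yet $F$ is not exceptional. So the hypothesis $d,e\ge 2$ (as in Theorem \ref{thm: main intro}) must be added, and the paper's proof also uses it tacitly when it invokes Proposition \ref{prop: 12 implies exceptional}.
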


\begin{proof}
    If $p \in \PP^1(K)$ and $q \in F^n(p) \cap \PP^1(K)$, then there is at least one $K$-rational point $r \in \C_{F^n}(K)$ such that $p = f_{1,n}(r)$ and $q = g_{1,n}(r)$. 
    Hence if there are infinitely many such $K$-rational points $p$, then $\C_{F^n}(K)$ is infinite.
    Thus Faltings's theorem \cite[Thm. 3]{faltings1992complements} implies that $\C_{F^n}$ has a component with genus at most one.
    Assuming $n \geq 12$, Lemma \ref{lemma: RH cor} implies that $\C_{F^{12}}$ must also have genus at most one.
    Thus $F$ is exceptional by Proposition \ref{prop: 12 implies exceptional}.
\end{proof}

Our final result classifies exceptional correspondences up to isomorphism.
There are several cases where $F$ is exceptional but $\C_{F^n}$ is reducible for some small $n$ which we defer from analyzing in detail due to the tediousness of the case analysis involved.
Notably this happens when $\deg f_1 = 2$ or when $f_1$ is a degenerate Latt\'es map (a generalized Latt\'es map with a genus zero Galois closure).

\begin{theorem}
\label{thm: exceptional classification}
    Let $F$ be an exceptional correspondence and suppose that $d := \deg f_1$.
    \begin{enumerate}
        
        \item If $f_{1,2}$ has cyclic signature, then $F$ is isomorphic to $f_1 = x^d$ and $g_1 = x^e h(x)^d$ for some $e$ coprime to $d$ and some rational function $h$ with $x$-valuation 0.
        \begin{enumerate}
            \item $F$ is stable.
            \item If $h$ is constant, then $g_1 = c x^e$ and $\gen(\C_{F^n}) = 0$ for all $n \geq 0$.
            \item If $h$ is non-constant and $m$ is the largest integer such that $g_1 = x^e \tilde h(x)^{d^{m-1}}$ for some polynomial $\tilde h(x)$, then $\gen(\C_{F^n}) = 0$ for all $n \leq m$ and if $d \geq 3$ or $d =2$ and $m \geq 3$, then $\gen(\C_{F^n}) > 1$ for $n > m$. 
            If $d = m = 2$, then $\gen(\C_{F^n}) > 1$ for all $n > m + 1$.
        \end{enumerate}
        
        \item If $d \geq 3$ and $f_{1,2}$ has dihedral signature, then up to isomorphism of $F$, $f_1 = T_d$ and $g_1$ is a rational function for which $g_1 \pi = \pi \tilde g_1$, where $\pi = \frac{x + x\inv}{2}$, $\tilde g_1 = \pm x^e \frac{s(x)^d}{s^\dagger (x)^d}$, the $x$-valuation of $s(x)$ is 0, and $s^\dagger(x) := x^{\deg s}s(x\inv)$.
        \begin{enumerate}
            \item $F$ is stable.
            \item If $s$ is constant, then $\gen(\C_{F^n}) = 0$ for all $n \geq 0$.
            \item If $s$ is non-constant and $m$ is the largest integer such that $g_1 = x^e \frac{\tilde s(x)^{d^{m-1}}}{\tilde s^\dagger(x)^{d^{m-1}}}$ for some polynomial $\tilde s(x)$, then $\gen(\C_{F^n}) = 0$ for all $n \leq m$ and $\gen(\C_{F^n}) > 1$ for $n > m$. 
        \end{enumerate}
        
        \item If $d \geq 3$ and $f_{1,2}$ has a Latt\'es signature, then either $\C_{F^{11}}$ is reducible, or has genus $> 1$, or $(f_1,g_1) = (L_{\pi_1,\pi_2,\psi}, L_{\pi_1,\pi_2,\phi})$ for some $\pi_i : \E_i \to \PP^1$ and some finite maps $\psi, \phi : \E_1 \to \E_0$.
        In the latter case, if $\C_{F^3}$ is irreducible, then $F$ is stable and $\C_{F^n}$ has genus zero for all $n \geq 0$.

        \item If $d = 2$, then either $\C_{F^{16}}$ is reducible, or has genus $> 1$, or $f_{1,4}$ is a low-genus rational function.
        In the latter case, $f_{1,2}$ has degree 4 and one of the previous cases applies to $F^2$.
        
        \item $f_{1,2}$ cannot have a Platonic signature.
    \end{enumerate}
\end{theorem}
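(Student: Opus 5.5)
The plan is to work from the fiber product square $f_1 g_2 = g_1 f_2$ that defines $\C_{F^2} \cong \PP^1$, and to split into cases according to the signature of the low-genus map $f_{1,2} = f_1 f_2$. Because $f_{1,2}$ is low-genus, so is its left factor $f_1$. Lemma \ref{lemma: galois ramification} bounds the signature of $f_1$ by that of $f_{1,2}$: each $m_{f_1}(p)$ divides $m_{f_{1,2}}(p)$. The tables of Section \ref{sec: low-genus} then give a finite list of possible ramification types for $f_1$ and $f_{1,2}$. In each case the steps are: (a) normalize $f_1$ using Lemma \ref{lemma: gen 0 exceptional rigidity}, or Lemma \ref{lemma: normalizing lattes} in the Latt\'es case; (b) deduce the shape of $g_1$ from the requirement that $\C_{F^2}$ be irreducible of genus zero; (c) compute the genus of $\C_{F^n}$ by iterating the explicit fiber products.

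\textbf{Cyclic case.} If $f_{1,2}$ has signature $(N,N)$, then $f_1$ has the same two critical values and, being a cyclic left factor of a Galois map, is $x^d$ up to isomorphism. Irreducibility of $\C_{F^2}$, together with Lemma \ref{lemma: power map fiber product}(2), forces $g_1$ not to be a $k$th power times a monomial for any $k \mid d$ with $k > 1$. Riemann--Hurwitz for $f_2$ on the genus zero curve $\C_{F^2}$ then says $f_2$ can ramify only over $0$ and $\infty$. So $f_2$ is a power map and the pullback $g_1(x^d)$ is totally ramified appropriately; this should yield $g_1 = x^e h(x)^d$ with $\gcd(e,d)=1$. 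Iterating, the fiber product of $x^d$ with $x^e h(x)^{d^{k}}$ is parametrized by $(x^d, x^e h(x^d)^{d^{k-1}})$. Stability follows from Lemma \ref{lemma: power map fiber product}(2) and $\gcd(e,d)=1$. At each step the exponent of $h$ drops by a factor of $d$, and once it is no longer divisible by $d$, Riemann--Hurwitz applied to $\tilde h$ pulled back along $x^d$ gives genus $>1$. The case $d = 2$, $m = 2$ gives genus one at exactly one extra step, which I would check by hand.

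\textbf{Dihedral case.} Here I pass to the double cover $\pi = \frac{x+x^{-1}}{2}$, with $T_d\pi = \pi x^d$. Lifting $g_1$ to a $\tilde g_1$ commuting with the involution $x \mapsto x^{-1}$ reduces this case to the cyclic case with the symmetric constraint, which produces the form $\pm x^e s^d/(s^\dagger)^d$. The hypothesis $d \ge 3$ is what makes the cyclic left factor $\pi$ canonical.

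\textbf{Latt\'es and Platonic cases.} For Latt\'es signatures I would use Lemma \ref{lemma: normalizing lattes} to lift $f_1$ and $g_1$ to maps $\psi, \phi : \E_1 \to \E_0$. The genus-one fiber products are then governed by Theorem \ref{thm: genus one correspondence classification}, which supplies the criterion ``$\C_{F^3}$ irreducible implies stable''. When no such common lift exists, I would run Theorem \ref{thm: bdd genus} again at level $11$ to force reducibility or genus $>1$. For $d = 2$, I apply the whole analysis to $F^2$, noting $\C_{(F^2)^8} = \C_{F^{16}}$. The Platonic exclusion comes from Table \ref{table: gen 0 census}: a Platonic left factor $f_1$ of degree at least $3$ forces ramification of $g_1$ that is incompatible with $\C_{F^2}$ being irreducible of genus zero.

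The main obstacle is step (b) in every case: turning ``$\C_{F^2}$ irreducible of genus zero'' into an exact normal form for $g_1$, and in particular ruling out the Platonic signatures and the degenerate Latt\'es maps. I would attack this with Riemann--Hurwitz on $f_2$ combined with Abhyankar's lemma (Lemma \ref{lemma: abhyankar}), which limits the number of critical points of $g_1$ that can fail to absorb the ramification of $f_1$.
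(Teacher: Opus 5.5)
Your case split and your treatment of the cyclic, dihedral and $d=2$ cases follow the paper's route. In the dihedral case you still owe the step that neither $f_1$ nor $f_2$ is Galois; that is where $d\ge 3$ actually does its work, more than canonicity of $\pi$. Only $f_1=f_2=T_d$ gives the Abhyankar conditions on $g_1$ over $\pm1$ that produce the lift $g_1\pi=\pi\tilde g_1$. You must also exclude $\tilde g_1(x^{-1})=\tilde g_1(x)$, which would make $\pi$ a common left factor.

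The genuine gap is in the Latt\'es case. Lemma \ref{lemma: normalizing lattes} applies to $f_1$ because $f_1f_2$ has a genus one Galois closure. Nothing in the hypotheses lets you apply it to $g_1$, and proving $g_1=L_{\pi_1,\pi_2,\phi}$ over the \emph{same} cyclic quotients is the heart of this case. Theorem \ref{thm: bdd genus} at level $11$ cannot do this for you: via Corollary \ref{corollary: low genus criteria} it only constrains the $f$-side, giving that $f_{1,3}$ is low-genus. The paper uses that, with Lemma \ref{lemma: gen 0 to gen 1 transition}, only to get a genuine elliptic tower above $f_1,f_2,f_3$, which disposes of degenerate Latt\'es factors. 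The conclusion about $g_1$ needs two further ideas that your plan lacks.
First, the pullback $f_{2,3}$ of $f_{1,2}$ along $g_1$ must have the same signature as $f_{1,2}$. A drop from $(2,4,4)$ to $(2,2,2,2)$, or from $(2,3,6)$ to $(2,2,2,2)$ or $(3,3,3)$, produces a prime $p\in\{2,3\}$ and two critical values over which every ramification index of both $f_{1,2}$ and $g_1$ is divisible by $p$. Lemma \ref{lemma: power map fiber product}(1) then gives them a common left factor $\mu x^p\mu^{-1}$, contradicting irreducibility of $\C_{F^3}$.
Second, Riemann--Hurwitz must be applied to $g_1$ itself, not to $f_2$. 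Because $f_{2,3}$ has the same signature, only $k=|V|$ points of $g_1^{-1}(V)$ fail $m_i\mid e_{g_1}$. Hence $|g_1^{-1}(V)|\le k+(e-1)\sum_i 1/m_i$, and $\sum_i(1-1/m_i)=2$ forces $g_1$ to be unramified outside $V$ with $m_{g_1}(v_i)=m_i$. That is exactly what makes $g_1$ lift to some $\psi':\E_2\to\E_1$ and lets Theorem \ref{thm: genus one correspondence classification} take over.

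The Platonic case is also misdirected. You start from a Platonic left factor $f_1$ of degree at least $3$ and look for a clash with $g_1$, but that situation never arises and $g_1$ plays no role. The obstruction is intrinsic to $f_{1,2}=f_1f_2$. Two factors of equal degree $d$ give subgroups $H_0\subseteq H_1\subseteq G$ with $[G:H_1]=[H_1:H_0]=d$, so $d^2$ divides $|G|\in\{12,24,60\}$, forcing $d=2$. Then $A_4$ and $A_5$ have no subgroup of index $2$, and the only one in $S_4$ is $A_4$, which has none. Equivalently, in Table \ref{table: gen 0 census} the only Platonic degree that is a square is $4$. Both degree $4$ Platonic types contain a point of index $3$, which is impossible for a composite of two quadratic maps. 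Your argument misses exactly this case $d=2$, where $f_1$ is quadratic and hence cyclic. More generally it misses the possibility that $f_1$ has cyclic or dihedral signature while $f_{1,2}$ is Platonic.
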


\begin{proof}

    (1) 
    If $f_{1,2}$ has cyclic signature, then Lemma \ref{lemma: Galois duck} implies that $f_{1,2}$ is Galois with cyclic Galois group.
    Up to conjugation, there is only one faithful action of the cyclic group on $\PP^1$.
    Hence we may choose M\"obius transformations $\mu_1, \mu_3$ such that $\mu_1 f_{1,2} \mu_3\inv = x^{d^2}$.
    Let $p_q := f_2\mu_3\inv(q)$ for $q \in \{0,1,\infty\}$.
    Then $\mu_1 f_1f_2 \mu_3\inv = x^{d^2}$ implies that $\mu_1 f_1(p_q) = q$.
    Let $\mu_2$ be the unique M\"obius transformation such that $\mu_2(p_q) = q$ for $v \in \{0,1,\infty\}$.
    Then $\mu_1 f_1 \mu_2\inv$ and $\mu_2 f_2 \mu_3\inv$ have the same critical values, critical points, and ramification as $x^{d}$.
    This, together with $\mu_1 f_1 \mu_2\inv(1) = \mu_2 f_2 \mu_3\inv(1) = 1$ implies that $\mu_1 f_1 \mu_2\inv = \mu_2 f_2 \mu_3\inv = x^{d}$.
    Hence up to isomorphism of $F$ we may assume that $f_1 = f_2 = x^d$.
    
    Abhyankar's lemma implies that
    \begin{enumerate}[label=(\roman*)]
        \item $g_1(\{0,\infty\}) = \{0,\infty\}$,
        \item $e_{g_1}(0), e_{g_1}(\infty)$ are coprime to $d$, and
        \item $d \mid e_{g_1}(p)$ for all $p \in g_1\inv(\{0,\infty\}) \setminus \{0,\infty\}$.
    \end{enumerate}
    These conditions are equivalent to $g_1 = x^eh(x)^d$ for some integer $e$ coprime $d$ and some rational function $h$.
    In particular, this implies that $\deg g_1$ is coprime to $d$, hence that $F$ is stable by Lemma \ref{lemma: coprime degree implies irreducible}, proving (1a).

    (1b)
    If $h = c$ is constant, then $g_1 = cx^e$ and $g_2 = c^{1/d} x^e$.
    Then it follows by induction that $f_n = x^d$ and $g_n = c^{1/d^{n-1}}x^e$ for all $n\geq 1$.
    Thus $\gen(\C_{F^n}) = 0$ for all $n \geq 0$.

    (1c)
    If $h$ is non-constant, then there is some integer $m \geq 2$ such that $h = \tilde h(x)^{d^{m-1}}$ for a polynomial $\tilde h(x)$ which is not a $d$th power.
    The only way that $\tilde h(x^d)$ can be a $d$th power is if $\tilde h(x) = x^{e'}h'(x)^d$ for some integer $e'$ not divisible by $d$ and some rational function $h'$.
    However, we assumed that the $x$-valuation of $h$, and hence of $\tilde h$, was 0.
    Thus $h(x^d)$ is not a $d$th power.
    
    Then $g_2 = x^e \tilde h(x^d)^{d^{m-2}}$ and it follows by induction that $f_n = x^d$ and $g_n = x^e \tilde h(x^{d^{n-1}})^{d^{m-n}}$ for $n \leq m$.
    Our assumption about $\tilde h$ implies that there is at least one point $p \in \tilde h\inv(\{0,\infty\}) \setminus \{0,\infty\}$ for which $e_{\tilde h}(p)$ is not divisible by $d$.
    Hence if $d \geq 3$ or if $d = 2$ and $m \geq 3$, then there are at least $d^{m-1} \geq 3$ points $q$ for which the ramification of $\tilde h(x^{d^{m-1}})$ at $q$ is not divisible by $d$.
    Abhyankar's lemma implies that $f_{m+1}$ has at least 5 critical points, two of which are totally ramified.
    Then Riemann-Hurwitz implies that
    \[
        2\gen(\C_{F^{m+1}}) + 2d - 2 = \sum_{p}e_{f_{m+1}}(p) - 1 \geq 2d +1 \Longrightarrow \gen(\C_{F^{m+1}}) > 1.
    \]
    Therefore $\gen(\C_{F^n}) > 1$ for all $n > m$.
    If $d = m = 2$, then $\C_{F^{m + 1}}$ could have genus one, but then Lemma \ref{lemma: gen 0 to gen 1 transition} implies that $\gen(\C_{F^n}) > 1$ for all $n > m + 1$.

    (2) 
    Suppose $f_{1,2}$ has a dihedral signature.
    Since $d \geq 3$, $f_1$ must also have dihedral signature.
    If $f_1$ is Galois, then so is $f_2$.
    However, the composition of Galois dihedral maps with $d \geq 3$ does not have dihedral signature.
    Hence $f_1$ is not Galois.
    Thus, up to isomorphism of $F$, we may assume that $f_1 = T_d$.

    If $f_2$ is Galois, then Abhyankar's lemma implies that all the ramification indices of $g_1$ over $\pm 1$ are even.
    Thus Lemma \ref{lemma: power map fiber product} implies that $g_1 = \pi h_1$ where $\pi = \frac{x + x\inv}{2}$ and $h_1$ is a rational function.
    Thus we have the following fiber product diagram.
    \[
        \begin{tikzcd}
        \mathbb{P}^1 \arrow[d, "T_d"'] & \mathbb{P}^1 \arrow[l, "\pi"'] \arrow[d, "x^d"] & \mathbb{P}^1 \arrow[l, "h_2"'] \arrow[d, "f_2"] \\
        \mathbb{P}^1                   & \mathbb{P}^1 \arrow[l, "\pi"]                   & \mathbb{P}^1 \arrow[l, "h_1"]                  
        \end{tikzcd}
    \]
    Then $f_2 = \mu_1 x^d \mu_2\inv$ for some M\"obius transformations $\mu_1, \mu_2$.
    However, $d \geq 3$ implies that $f_{1,2} = T_d \mu_1 x^d \mu_2\inv$ does not have dihedral signature.
    Thus $f_2$ is not Galois.

    Therefore $f_2$ must have the same ramification type as $T_d$ and $f_{1,2}$ must have the same ramification type as $T_{d^2}$.
    Since $f_2$ is only determined up to right composition with a M\"obius transformation, we may assume that $f_{1,2} = T_{d^2}$.
    Then $T_df_2 = T_dT_d$.
    If $d$ is odd, this immediately implies that $f_2 = T_d$.
    If $d$ is even, then we conclude that $f_2 = \pm T_d$.
    However, in that case we may replace $(f_1, f_2)$ by $(f_1\mu\inv, \mu f_2)$ where $\mu = -x$ in order to assume $f_1 = f_2 = T_d$.
    
    Abhyankar's lemma implies that for $i = 1, 2$
    \begin{enumerate}[label=(\roman*)]
        \item $g_i(\{\pm 1\}) = \{\pm 1\}$,
        \item $g_i(\infty) = \infty$,
        \item $e_{g_i}(\pm1)$ are both odd,
        \item $e_{g_i}(\infty)$ is coprime to $d$,
        \item $e_{g_i}(p)$ is even for all $p \in g_i\inv(\{\pm 1\}) \setminus \{\pm 1\}$, and
        \item $d \mid e_{g_i}(p)$ for all $p \in g_i\inv(\infty)\setminus\{\infty\}$.
    \end{enumerate}
    Recall that $T_d\pi = \pi(x^d)$.
    Since $\pi$ is degree 2 and totally ramified over $\pm 1$, Lemma \ref{lemma: power map fiber product} implies that the fiber product of $\pi$ with $g_1$ is irreducible and Abhyankar's lemma implies that there is some rational function $\tilde g_i$ such that $g_i\pi = \pi \tilde g_i$.
    Hence we have the following commutative diagram in which the vertical maps are all $\pi$ and each square is a fiber product.
    \[
        \begin{tikzcd}[row sep=2em, column sep=2em]
        & \PP^1 \arrow[dl, "x^d"'] \arrow[dd] 
            & & \PP^1 \arrow[ll, "\tilde{g}_2"'] \arrow[dl, "x^d"] \arrow[dd] \\
        \PP^1 \arrow[dd] 
            & & \PP^1 \arrow[ll, "\tilde{g}_1"' near start, crossing over]\\
        & \PP^1  \arrow[dl, "T_d"'] 
            & & \PP^1 \arrow[dl, "T_d"] \arrow[ll, "g_2"' near end] \\
        \PP^1  
            & & \PP^1 \arrow[ll, "g_1"]
        \arrow[from=2-3, to=4-3, crossing over]
        \end{tikzcd}
    \]
    As we argued in part (2), $\tilde g_1 = x^eh(x)^d$ for some integer $e$ coprime to $d$ and some rational function $h$ with $x$-valuation 0.
    This diagram implies that $\pi \tilde g_1(x) = \pi \tilde g_1(x\inv)$, hence that $\tilde g_1(x\inv) = \tilde g_1(x)^{\pm 1}$.
    If $\tilde g_1(x) = \tilde g_1(x\inv)$, then $\tilde g_1 = \tilde h_1 \pi$ for some rational function $\tilde h_1$.
    However, in this case $g_1 = \pi \tilde h_1$, which contradicts the fact that $g_1$ and $\pi$ have an irreducible fiber product.
    Therefore $\tilde g_1(x\inv) = \tilde g_1(x)\inv$, or equivalently that $\zeta_dh(x)h(x\inv) = 1$ for some $d$th root of unity $\zeta_d$.
    Let $h = \frac{s}{t}$ where $s, t$ are coprime polynomials in $x$ which do not vanish at $x = 0$.
    Then $\zeta_ds(x)s(x\inv) = t(x)t(x\inv)$.
    This implies that $\deg s = \deg t$.
    Let $s^\dagger(x) := x^{\deg s}s(x\inv)$ denote the reciprocal polynomial of $s$.
    Then $\zeta_ds(x)s^\dagger(x) = t(x)t^\dagger(x)$ and $s$ coprime to $t$ implies that $t(x) = \sqrt\zeta_d s^\dagger(x)$.
    Therefore $\tilde g_1 = \pm x^e \frac{s(x)^d}{s^\dagger(x)^d}$ for $s(x)$ a polynomial with $x$-valuation 0.

    Hence $\deg g_1 = \deg \tilde g_1$ is coprime to $d$ and Lemma \ref{lemma: coprime degree implies irreducible} implies that $F$ is stable.
    If $s$ is constant, then (1b) implies (2b), and (1c) implies that $\C_{F^n}$ has genus 0 for $n \leq m$.
    Suppose that $s$ is nonconstant.
    Then there exists a point $p \in g_m\inv(\infty)\setminus\{\infty\}$ such that $d \mid e_{g_m}(p)$ and $d^2 \nmid e_{g_m}(p)$.
    Therefore Abhyankar's lemma implies that $g_{m+1}\inv(\infty)$ contains at least $d \geq 3$ points $q$ such that $e_{g_{m+1}}(q)$ is not divisible by $d$.
    Note that $g_{m+1}$ has identical ramification to $g_m$ over $\pm 1$ and $\infty$.
    Let $k := \deg g_1$, then Riemann-Hurwitz applied to $g_{m+1}$ gives us
    \[
        2\gen(\C_{F^{m+1}}) + 2k - 2 = \sum_{p}e_{g^{m+1}}(p) - 1 \geq 2k +1 \Longrightarrow \gen(\C_{F^{m+1}}) > 1.
    \]
    Thus $\gen(\C_{F^n}) > 1$ for all $n > m$.

    (3) 
    Suppose that $f_{1,2}$ has a Latt\'es signature.
    Then $f_{1,2}$ has Galois closure of genus one.
    \footnote{While it is true by Lemma \ref{lemma: normalizing lattes} that $f_1$ and $f_2$ also generalized Latt\'es maps, they may be degenerate in the sense that their Galois closures have genus zero.
    This leads to a handful of special cases which after a few more iterates either resolve to the general case or lead to a reducible or genus $> 1$ iterate.
    What we argue here is that if $f_{1,3}$ is a generalized Latt\'es map, then that eliminates all the special cases.}
    Suppose $\C_{F^{11}}$ is irreducible with genus at most one.
    Note that $\C_{F^{11}}$ is the fiber product of $f_{1,3}$ and $g_{1,8}$.
    A calculation shows that $d^8 > 170d^3 - 84$ for all $d \geq 3$.
    Then
    \[
        \deg g_{1,8} = e^8 \geq d^8 > 170 d^3 - 84 = 170\deg f_{1,3} - 84,
    \]
    together Corollary \ref{corollary: low genus criteria} and Lemma \ref{lemma: gen 0 to gen 1 transition} implies that $f_{1,3}$ is a low-genus rational function.
    Since $f_{1,2}$ is a left composition factor of $f_{1,3}$, the Galois closure of $f_{1,3}$ also has genus one.
    Lemma \ref{lemma: normalizing lattes} implies that $f_{1,3} = L_{\pi_1, \pi_4,\psi}$ for some $\pi_i : \E_i \to \PP^1$ and some $\psi : \E_4 \to \E_1$.
    Let $\E_2$ and $\E_3$ be the irreducible components of the fiber products of $\pi_1$ with $f_1$ and $f_{1,2}$, respectively, which $\psi$ factors through.
    Let $\pi_2 : \E_2 \to \PP^1$ and $\pi_3 : \E_3 \to \PP^1$ be the restrictions of the projections from the fiber product.
    Since $\pi_1$ is a cyclic Galois map, the second group isomorphism theorem implies that $\pi_i$ for $2\leq i \leq 4$ are as well.
    Let $\psi = \psi_1\psi_2\psi_3$ be the factorization of $\psi$ through $\E_2$ and $\E_3$.
    Thus we have the following commutative diagram,
    \[
        \begin{tikzcd}
        \mathcal{E}_1 \arrow[d, "\pi_1"'] & \mathcal{E}_2 \arrow[l, "\psi_1"'] \arrow[d, "\pi_2"] & \mathcal{E}_3 \arrow[l, "\psi_{2}"'] \arrow[d, "\pi_4"] & \E_4 \arrow[l, "\psi_3"'] \arrow[d,"\pi_4"]\\
        \mathbb{P}^1                      & \mathbb{P}^1 \arrow[l, "f_1"]                         & \mathbb{P}^1 \arrow[l, "f_{2}"] & \PP^1 \arrow[l,"f_3"]         
        \end{tikzcd}
    \]
    which shows that $f_i = L_{\pi_i, \pi_{i+1}, \psi_i}$ for $1 \leq i \leq 3$.
    We now turn our attention to this fiber product diagram.
    \[
        \begin{tikzcd}
        \mathbb{P}^1 \arrow[d, "{f_{1,2}}"'] & \mathbb{P}^1 \arrow[l, "g_3"'] \arrow[d, "{f_{2,3}}"] \\
        \mathbb{P}^1                         & \mathbb{P}^1 \arrow[l, "g_1"]                        
        \end{tikzcd}
    \]

    First suppose that $f_{1,2}$ and $f_{2,3}$ have different signatures.
    This can only happen if $f_{1,2}$ has signature $(2,4,4)$ or $(2,3,6)$, and $f_{2,3}$ has signature $(2,2,2,2)$ in the first case and either $(2,2,2,2)$ or $(3,3,3)$ in the second case.
    Both cases require there to be a prime $p = 2$ or 3 and two points $v_i, v_j \in V$ such that $p \mid e_{g_1}(q)$ for every point $q \in g_1\inv(\{v_i,v_j\})$ and $p \mid e_{f_{1,2}}(q)$ for every point $q \in f_{1,2}\inv(\{v_i,v_j\})$.
    But, in that case, Lemma \ref{lemma: power map fiber product} implies that $f_{1,2}$ and $g_1$ have a common left factor of the form $\mu x^p \mu\inv$ for some M\"obius transformation $\mu$.
    Thus the fiber product of $f_{1,2}$ and $g_1$ is reducible---a contradiction.

    Thus $f_{1,2}$ and $f_{2,3}$ have the same signature.
    Let $e := \deg g_1$, let $V = \{v_1,\ldots, v_k\}$ denote the set of critical values of $f_{1,2}$, and let $m_i := m_{f_{1,2}}(p_i)$.
    Riemann-Hurwitz applied to $g_1$ tells us that
    \[
        2e - 2 = ke - |g_1\inv(V)| + R,
    \]
    where $R$ is the contribution coming from ramification of $g_1$ over points not in $V$.
    Solving for $R$ we have
    \[
        R = |g_1\inv(V)| - ke + 2(e - 1).
    \]
    Abhyankar's lemma and our assumption that $f_{2,3}$ has the same signature as $f_{1,2}$ implies that $g_1\inv(V)$ contains $k$ points $p$ with $e_{g_1}(p)$ coprime to $m_i$ for $p \in g_1\inv(v_i)$.
    All the other points $q \in g_1\inv(V)$ must have $m_i \mid e_{g_1}(q)$ when $q = g_1\inv(v_i)$.
    Thus
    \[
        |g_1\inv(V)| \leq k + (e -1)\sum_{i=1}^k\frac{1}{m_i}.
    \]
    Combined with our expression for $R$ this yields
    \[
        R \leq (e-1)\Big(2 - k + \sum_{i=1}^k\frac{1}{m_i}\Big).
    \]
    If $k = 4$, then each $m_i = 2$ and if $k = 3$, then $\sum_{i=1}^3 \frac{1}{m_i} = 1$, hence in either case $R = 0$.
    Therefore $g_1$ is only ramified over $V$, $m_{g_1}(v_i) = m_i$ for each $i$, and $\sum_{i=1}^k\frac{1}{m_i} = k - 2$.
    These ramification constraints imply that $g_1 = L_{\pi_1, \pi_2, \psi'}$ for some $\psi' : \E_2 \to \E_1$.
    Let $\widetilde F$ be the correspondence defined by $\psi_1, \psi' : \E_2 \to \E_1$.
    If $\C_{F^3}$ is irreducible, then so is $\C_{\widetilde F^3}$.
    Then Theorem \ref{thm: genus one correspondence classification} implies that $\widetilde F$ is stable and $\C_{\widetilde F^n}$ has genus one for all $n \geq 0$.
    Hence $F$ is also stable and $\C_{F^n}$ has genus zero for all $n \geq 0$.

    (4)
    Suppose that $d = 2$.
    Suppose $\C_{F^{16}}$ is irreducible with genus at most one.
    Note that $\C_{F^{16}}$ is the fiber product of $f_{1,4}$ and $g_{1,12}$.
    A calculation shows that $d^{12} > 170d^4 - 84$ for $d = 2$.
    Then
    \[
        \deg g_{1,12} = e^{12} \geq d^{12} > 170 d^4 - 84 = 170\deg f_{1,4} - 84,
    \]
    together Corollary \ref{corollary: low genus criteria} and Lemma \ref{lemma: gen 0 to gen 1 transition} implies that $f_{1,4}$ is a low-genus rational function.

    (5) 
    Suppose that $f_{1,2}$ has a Platonic signature.
    Let $G$ be the Galois group of the Galois closure of $f_{1,2}$.
    Then by assumption $G$ is isomorphic to either $A_4, S_4$, or $A_5$.
    Galois theory implies that there are subgroups $H_0 \subseteq H_1 \subseteq G$ such that $[G : H_1] = [H_1 : H_0] = d$.
    Thus $d^2$ divides $|G|$ for some $d \geq 2$, which for these groups implies that $d = 2$.
    However $A_4$ and $A_5$ do not have any index 2 subgroups, and the only index 2 subgroup of $S_4$ is $A_4$.
    Thus this case is impossible.
\end{proof}

\printbibliography

\end{document}